\documentclass[reqno]{amsart}

\makeatletter
\renewcommand\part{%
	\if@noskipsec \leavevmode \fi
	\par
	\addvspace{4ex}%
	\@afterindentfalse
	\secdef\@part\@spart}

\def\@part[#1]#2{%
	\ifnum \c@secnumdepth >\m@ne
	\refstepcounter{part}%
	\addcontentsline{toc}{part}{\thepart\hspace{1em}#1}%
	\else
	\addcontentsline{toc}{part}{#1}%
	\fi
	{\parindent \z@ \raggedright
		\interlinepenalty \@M
		\normalfontsymmetric decreasing rearrangement
		\ifnum \c@secnumdepth >\m@ne
		\Large\bfseries \partname\nobreakspace\thepart
		\par\nobreak
		\fi
		\huge \bfseries #2%
		\par}%
	\nobreak
	\vskip 3ex
	\@afterheading}
\def\@spart#1{%
	{\parindent \z@ \raggedright
		\interlinepenalty \@M
		\normalfont
		\huge \bfseries #1\par}%
	\nobreak
	\vskip 3ex
	\@afterheading}
\makeatother
\usepackage{color}
\usepackage[dvipsnames]{xcolor}
\usepackage{ifpdf}
\ifpdf 
    \usepackage[pdftex]{graphicx}   
    \usepackage[pdftex,     
            plainpages=false,   
            breaklinks=true,    
            colorlinks=true,
            linkcolor=red,
            citecolor=green,
            pdftitle=My Document
            pdfauthor=My Good Self
           ]{hyperref} 
\else 
    \usepackage{graphicx}       
\fi 

\usepackage{subfig}

\usepackage{glossaries}
\usepackage{glossary-mcols}

\usepackage{aurical}
\usepackage{amsfonts,amsmath}	
\usepackage{amssymb}
\usepackage{verbatim}
\usepackage{amsopn}
\usepackage[english]{babel}
\usepackage{amsthm}
\usepackage{enumerate}
\usepackage{mathrsfs}	
\usepackage{enumitem}
\usepackage{mathtools}
\usepackage{esint}
\usepackage{bbm}
\usepackage{caption}
\usepackage{marginnote}
\usepackage[marginparwidth=2cm]{geometry}
\usepackage{csquotes}
\usepackage{cleveref}
\crefname{enumi}{part}{parts}
\date{\today}

\theoremstyle{definition} \newtheorem{definition}{Definition}[section]
\theoremstyle{definition} \newtheorem{remark}[definition]{Remark}
\theoremstyle{plain} \newtheorem{lemma}[definition]{Lemma}
\theoremstyle{plain} \newtheorem{proposition}[definition]{Proposition}
\theoremstyle{plain} \newtheorem{theorem}[definition]{Theorem}
\theoremstyle{plain} \newtheorem{corollary}[definition]{Corollary}
\theoremstyle{definition} 
\theoremstyle{plain} 
\theoremstyle{definition} 
\theoremstyle{definition}

\DeclareMathOperator{\dive}{div}

\newcommand{\norm}[1]{\lVert #1\rVert}

\newcommand{\R}{\mathbb{R}}

\newcommand{\Per}{\mathbf{Per}}

\numberwithin{equation}{section} 

\theoremstyle{plain} \newtheorem*{theorem*}{Theorem}
\theoremstyle{plain} 
\theoremstyle{plain} \newtheorem*{mthm*}{Main Theorem}
\theoremstyle{plain} \newtheorem*{conjecture*}{Conjecture}
\theoremstyle{plain} 
\theoremstyle{plain} \newtheorem*{problem*}{Problem}

\usepackage{biblatex} 
\title{Existence and Blow-Up for Non-linear Fokker-Planck with Controlled Drift Divergence}

\author{Giacomo Maria Leccese}
\email{giacomo.leccese94@gmail.com}

\begin{document}

\begin{abstract}
We extend the global existence results for critical parameters of \cite{bianchini2024existence} to the multidimensional problem
\begin{equation*}
    \partial_t u + \dive (b(t,x) u^{1+k}) = \Delta u
\end{equation*}
for $b:\R\times\R^d\to\R^d$ non-autonomous fields, in the case
\begin{equation*}
   (\dive b)_-\in L^\infty_{\mathrm{loc}}
  \bigl([0,\infty);L^{p,\infty}(\R^d)\bigr),
  \qquad p>d.
\end{equation*}
The proof of global existence follows the parabolic symmetrization approach of \cite{bandle1976symmetrizations} and is  based on the study of the function 
\begin{equation*}
  m(t,s)=\int_0^s u^*(t,r)\,dr.
\end{equation*}

\end{abstract}

\maketitle

\section{Introduction}

In this paper,  we study the global-in-time existence and the long-time behaviour of non-negative solutions to the nonlinear Fokker-Planck equation
\begin{equation}\label{eq:pde}
\left\{
\begin{aligned}
  &\partial_t u+\dive\bigl(b(t,x)u^{1+k}\bigr)=\Delta u,
    &&(t,x)\in(0,\infty)\times\R^d,\\
  &u(0,x)=u_0(x),
    &&x\in\R^d,
\end{aligned}
\right.
\end{equation}
where $k>0$, the initial datum is non-negative and
\begin{equation*}
  u_0\in L^1(\R^d)\cap L^\infty(\R^d).
\end{equation*}
Equation \eqref{eq:pde} describes the competition between linear diffusion and nonlinear drift. Since the equation is in divergence form, non-negative bounded solutions conserve the mass, i.e.
\begin{equation*}
  \norm{u(t)}_1=\norm{u_0}_1.
\end{equation*}
We assume that
\begin{equation}\label{eq:assumptions}
\begin{split}
& k>0, \ p>d\\
&  b\in L^\infty_{loc}\bigl((0,\infty)\times\R^d\bigr),\\
&(\dive b)_-\in L^\infty_{loc}
  \bigl((0,\infty),L^{p,\infty}(\R^d)\bigr).
\end{split}
\end{equation}
The local boundedness of $b$ is only used to obtain local existence and uniqueness, whereas the global estimate depends entirely on the negative part of its divergence bound.

\medskip
Nonlinear Fokker-Planck equations with superlinear drift arise in
kinetic theory and statistical mechanics. An important family was
introduced by Kaniadakis-Quarati to describe the kinetic evolution of fermionic and bosonic particle systems, leading respectively to
Fermi-Dirac and Bose-Einstein equilibrium distributions
\cite{kaniadakis1994classical,kaniadakis1993kinetic}.

Further results on the associated fermionic and bosonic equations,
including convergence to equilibrium and finite-time concentration,
can be found in
\cite{carrillo20081d,toscani2012finite}.
Related questions concerning blow-up and large-time behaviour for
nonlinear diffusion-advection and convection-diffusion equations
were studied in
\cite{alikakos1989blow,escobedo1991large, kaplan1963growth,
kieffer2025robust, toscani2025supercritical,}.

\medskip

Equations of the form \eqref{eq:pde} were first studied in dimension
one in \cite{guidolin2022global}, where global existence was obtained
in the subcritical regime for bounded Lipschitz drifts. A complete
classification of the one-dimensional problem was subsequently given
in \cite{bianchini2024existence}, under assumptions either on $b$
or on its derivative $b_x$ in weak Lebesgue spaces.

The scaling associated with a parabolic concentration gives the threshold
\begin{equation}\label{eq:critical-exponent}
  k_c:=\frac2d-\frac1p>0.
\end{equation}
Indeed, under
\begin{equation*}
  u_\lambda(t,x)=\lambda^d u(\lambda^2t,\lambda x),
  \qquad
  b_\lambda(t,x)=\lambda^{1-dk}b(\lambda^2t,\lambda x),
\end{equation*}
one has
\begin{equation*}
  \norm{(\dive b_\lambda(t))_-}_{L^{p,\infty}}
  =\lambda^{2-dk-d/p}
   \norm{(\dive b(\lambda^2t))_-}_{L^{p,\infty}},
\end{equation*}
which is invariant exactly for $k=k_c$.

The present paper extends the divergence-controlled part of this
classification to higher dimensions. The natural multidimensional
analogue of $b_x$ is $\dive b$. A direct application of the one-dimensional argument cannot obtain the multidimensional critical case. Indeed, the truncated entropy method used in \cite{bianchini2024existence} is based on the embedding $H^1(\R)\hookrightarrow L^\infty(\R)$, which fails in higher dimensions. Instead, we adapt the parabolic
symmetrization method introduced by Bandle in
\cite{bandle1976symmetrizations}. More precisely, we study the
mass-concentration function $m(t,s):=\int_0^s u^*(t,r)\ d r,$
where $u^*(t,\cdot)$ is the decreasing rearrangement of
$u(t,\cdot)$ in the volume variable, and compare $m(t,s)$ with an
explicit stationary profile. The derivative of this stationary barrier has the form
\begin{equation}\label{eq:intro-slope}
  f_\lambda(s)=
  \left(\lambda+\frac{k\sup_{0<t<T}\norm{ {(\dive b(t))_-}}_{L^{p,\infty}}}{d^{2-2/d}\omega_d^{2-2/d}k_c}s^{k_c}\right)^{-1/k}.
\end{equation}
When $k=k_c$, the singular limiting profile behaves like $s^{-1}$, so its primitive diverges logarithmically near the origin. Hence a barrier can contain arbitrarily large mass in any prescribed positive volume.  When
$k<k_c$, the singularity is stronger and the same conclusion holds. When $k>k_c$, the limiting profile is locally integrable, and this mechanism breaks down. The results of this paper on
the global existence can be summarised in the following theorem.

\begin{theorem}\label{thm:global-main}
Assume conditions \eqref{eq:assumptions}.  If $0<k\le k_c$,
then the solution of \eqref{eq:pde} is globally defined.
More precisely, for every $T>0$ there is a finite constant $\bar\lambda_T$ (defined in equation \eqref{eq:uniformconst}),
depending only on $d,p,k,\norm{u_0}_1,\norm{u_0}_\infty$ and
$
  \sup_{t\in[0,T]}\norm{ {(\dive b(t))_-}}_{L^{p,\infty}},
$
such that
\begin{equation*}
  \sup_{0\leq t\leq T}\norm{u(t)}_\infty\leq \bar\lambda_T.
\end{equation*}
\end{theorem}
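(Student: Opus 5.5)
The plan is a parabolic symmetrization argument à la Bandle, carried out on the mass-concentration function $m(t,s)=\int_0^s u^*(t,r)\,dr$. I will show that $m$ is a subsolution of a one-dimensional degenerate parabolic inequality in $(t,s)$, and then compare it with the explicit stationary barrier $M_\lambda(s)=\int_0^s f_\lambda(r)\,dr$, where $f_\lambda$ is given by \eqref{eq:intro-slope}. An $L^\infty$ bound on $u$ is equivalent to a bound on $\partial_s m(t,0^+)=u^*(t,0)$. So if $m(t,\cdot)\le M_\lambda$ holds on $[0,T]$ for some $\lambda$, and $m(t,0)=M_\lambda(0)=0$, then $u^*(t,0)\le f_\lambda(0)=\lambda^{-1/k}$. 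After renaming, this is the constant $\bar\lambda_T$.

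\textbf{Step 1 (equation for $m$).} Local existence and uniqueness of a bounded smooth solution come from the local boundedness of $b$. On the maximal interval I integrate the equation over the superlevel set $\{u(t)>u^*(t,s)\}$, which has measure $s$. I use the standard Talenti/Bandle estimates: the diffusion term is bounded via the isoperimetric inequality and coarea formula by $d^2\omega_d^{2/d}s^{2-2/d}\partial_{ss}m$. For the drift term I write
\[
\int_{\{u>\theta\}}\dive(bu^{1+k})=\theta^{1+k}\int_{\{u>\theta\}}\dive b-\int_{\{u>\theta\}}\bigl(u^{1+k}-\theta^{1+k}\bigr)\dive b\cdots
\]
More cleanly, I use $\dive(bu^{1+k})=\dive(b(u^{1+k}-\theta^{1+k}))+\theta^{1+k}\dive b$. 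The boundary term vanishes on $\{u=\theta\}$, and what remains is controlled by $\int_{\{u>\theta\}}(\dive b)_-\,\Phi(u)$ with $\Phi(u)\le k\int_\theta^u\sigma^{k}\,d\sigma$-type expressions. I bound this by the Hardy–Littlewood rearrangement inequality and the weak-$L^p$ bound $((\dive b)_-)^*(r)\le B r^{-1/p}$, where $B=\sup_{[0,T]}\norm{(\dive b)_-}_{L^{p,\infty}}$. The expected outcome is the inequality
\[
\partial_t m\le d^2\omega_d^{2/d}s^{2-2/d}\partial_{ss}m + C B\, s^{-1/p}\,(\partial_s m)^{1+k}\cdot s\quad(\text{schematically}),
\]
with the constant arranged so that the stationary equation reproduces \eqref{eq:intro-slope}. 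Concretely, the inequality reads $-\partial_s(f^{-k})\cdot(\dots)=kBd^{-2+2/d}\omega_d^{-2+2/d}s^{k_c-1}$, and this ODE integrates exactly to $f_\lambda^{-k}=\lambda+\frac{kB}{d^{2-2/d}\omega_d^{2-2/d}k_c}s^{k_c}$. The boundary data are $m(t,0)=0$ and $m(t,\infty)=\norm{u_0}_1$.

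\textbf{Step 2 (choice of barrier and comparison).} I need $\lambda$ such that $M_\lambda\ge m(0,\cdot)$ on $(0,\infty)$ and $M_\lambda(\infty)\ge\norm{u_0}_1$; the second condition takes care of the boundary at large $s$. Here $k\le k_c$ is used. As $\lambda\to0$, $f_\lambda(s)\to c\,s^{-k_c/k}$ with $k_c/k\ge1$, so $\int_0^{s_0}f_\lambda\to\infty$ for every fixed $s_0>0$. I choose $s_0=\norm{u_0}_1/\norm{u_0}_\infty$. Since $m(0,s)\le\min(s\norm{u_0}_\infty,\norm{u_0}_1)$, it suffices to take $\lambda$ small enough that $M_\lambda(s_0)\ge\norm{u_0}_1$ and $f_\lambda\ge\norm{u_0}_\infty$ on $[0,s_0]$. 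The latter can be arranged because $f_\lambda$ is decreasing and $\lambda\le\norm{u_0}_\infty^{-k}$ handles small $s$. Concavity of $M_\lambda$ then gives $M_\lambda\ge m(0,\cdot)$ everywhere. The dependence on $d,p,k,\norm{u_0}_1,\norm{u_0}_\infty,B$ is explicit, which yields \eqref{eq:uniformconst}. Finally, a comparison principle for the degenerate quasilinear operator gives $m\le M_\lambda$ on $[0,T]$. I prove it by the usual first-touching-point argument applied to $M_\lambda+\varepsilon(1+t)$ or to $M_{\lambda-\varepsilon}$.

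\textbf{Main obstacle.} The hardest step is Step 1: rigorously deriving the differential inequality for $m$ with the sharp drift term. Sets where $u$ has flat parts, lack of regularity of $s\mapsto u^*$, and the nonlinear coupling $(\partial_sm)^{1+k}$ all need care. I would first work with a smooth approximate solution and test against $(u-\theta)_+$, differentiating in $\theta$, following Bandle/Talenti/Alvino–Trombetti. The second delicate point is the degeneracy at $s=0$ in the comparison. I would handle it by comparing on $[\delta,\infty)$ and using $m(t,s)\le s\norm{u(t)}_\infty$, which is finite on the maximal interval, to rule out touching at $s\to0$. A priori boundedness of $\norm{u}_\infty$ then prevents blow-up, and the local theory extends the solution globally.
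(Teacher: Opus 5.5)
Your architecture matches the paper's: the same inequality $\partial_t m\le a_ds^{2-2/d}m_{ss}+a_ps^{1-1/p}(m_s)^{1+k}$, the same stationary profile, the same $s_0=\bar s=\norm{u_0}_1/\norm{u_0}_\infty$ with the initial ordering from concavity, and the same conclusion $u^*(t,0)\le\lambda^{-1/k}$. Handling flat parts by differentiating in $\theta$ (Talenti), instead of via maximizing sets and Lemma~\ref{lemma:stamp}, is a legitimate variant.

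The step that does not go through as written is the comparison. A first-touching-point argument needs the differential inequality to hold \emph{at} the contact point $(t_0,s_0)$. You only have it for a.e.\ $s$: the isoperimetric/coarea bound of Lemma~\ref{lem:diffusion} is obtained at those $s$ where $u^*(t_0,s)$ is a regular value. You also only have $\partial_t m$ for a.e.\ $t$. The function $m(t_0,\cdot)$ is merely $C^1$ with $m_s=u^*$, and contact only yields $\int_{s_0}^{s}(u^*-\partial_sM)\,dr\le0$, not a pointwise bound on $\partial_su^*(t_0,s_0)$. Nothing prevents $s_0$ from being exceptional.

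This can be repaired in two parts:
\begin{itemize}
\item In time, use the one-sided quotient from the proof of Lemma~\ref{lem:time}. At a first contact with $M_\lambda+\varepsilon(1+t)$ it gives $\int_{A_{t_0,s_0}}\partial_tu(t_0)\ge\varepsilon$.
\item In space, set $G(s):=\int_{A_{t_0,s}}\Delta u(t_0)$. It is continuous, and $u^*(t_0,\cdot)$ is locally absolutely continuous, so the a.e.\ bound integrates to $G/(a_ds^{2-2/d})\le\partial_su^*$ in $\mathcal D'$. A second-order Taylor comparison then gives $G(s_0)\le a_ds_0^{2-2/d}\psi''(s_0)$ for any smooth $\psi$ touching $m(t_0,\cdot)$ from above at $s_0$.
\end{itemize}

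The paper avoids this in Lemma~\ref{lem:ineqPhim} with an integral argument that uses only a.e.\ information. It tests against $v=(m-\Phi_{\bar\lambda_T})^+$ with weight $s^{2/d-2}$ on $[0,\bar s]$. The weight cancels $s^{2-2/d}$, so diffusion gives $-a_d\int v_s^2$, and boundary terms vanish since $v(0)=v(\bar s)=0$. The drift difference is at most $a_p(1+k)K^ks^{k_c-1}v|v_s|$, which is absorbed using $s^{2k_c-2}=s^{2/d-2}s^{2/d-2/p}$ and $p>d$. Gronwall then gives $v\equiv0$.

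Your $s\to0$ device ($m\le s\sup_{[0,T]}\norm{u}_\infty$) only works with the additive shift $M_\lambda+\varepsilon(1+t)$, which is a strict supersolution lying above $m$ near $s=0$. $M_{\lambda-\varepsilon}$ is neither: it is not strict, and its slope at $0$ is $(\lambda-\varepsilon)^{-1/k}$, so excluding contact there presupposes the bound you want.

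Three smaller corrections:
\begin{itemize}
\item The drift identity is exact. Since $b(u^{1+k}-\theta^{1+k})_+$ vanishes on $\{u=\theta\}$, you get $\int_{\{u>\theta\}}\dive(bu^{1+k})=\theta^{1+k}\int_{\{u>\theta\}}\dive b$ with no remainder, as in Lemma~\ref{lem:drift}. A leftover like $\int_{\{u>\theta\}}(\dive b)_-(u^{1+k}-\theta^{1+k})$ would involve $u^*$ on all of $(0,s)$ and would not close in $m_s$.
\item The condition ``$f_\lambda\ge\norm{u_0}_\infty$ on $[0,s_0]$'' cannot be arranged in general. Indeed $f_\lambda(s_0)<A^{-1/k}s_0^{-k_c/k}$ for every $\lambda>0$, with $A$ as in Lemma~\ref{lem:barlambda}; for $k=k_c$ it fails whenever $A^{1/k}\norm{u_0}_1\ge1$. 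It is also unnecessary: $M_\lambda(s_0)\ge\norm{u_0}_1=s_0\norm{u_0}_\infty$ together with concavity already gives $M_\lambda\ge m(0,\cdot)$.
\item With the paper's normalization $\omega_d=\mathcal H^{d-1}(S^{d-1})$, the isoperimetric constant is $d^{2-2/d}\omega_d^{2/d}$. Your $d^2\omega_d^{2/d}$ corresponds to $\omega_d=|B_1|$.
\end{itemize}
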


In the supercritical domain, the energy method of \cite{ bianchini2024existence,toscani2012finite} can be generalised for the multidimensional problem as follows.

\begin{theorem}\label{thm:blowup-main}
Assume conditions \eqref{eq:assumptions}. If
$k> \frac2d-\frac1p$, there exists a time-independent radial vector field $b$, with
\begin{equation*}
  \dive b\in L^p(\R^d)\subset L^{p,\infty}(\R^d),
\end{equation*}
and a non-negative
\begin{equation*}
  u_0\in L^1(\R^d)\cap L^\infty(\R^d),
  \qquad
  \int_{\R^d}|x|^2u_0(x)\,d x<\infty,
\end{equation*}
for which the corresponding solution blows up in finite time in the
$L^\infty$-norm.
\end{theorem}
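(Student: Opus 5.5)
We use the virial (second-moment) method of \cite{toscani2012finite,bianchini2024existence}, with an explicit radial field. Take $b(x)=-x\,\phi(|x|)$ for a radial profile $\phi\ge 0$. The natural choice is $b(x)=-x|x|^{-\alpha}$ near the origin, suitably cut off at infinity so that $b$ is bounded and $\dive b\in L^p$. Then
\begin{equation*}
\dive b=-(d-\alpha)|x|^{-\alpha}\quad\text{near }0 ,
\end{equation*}
which lies in $L^p_{\mathrm{loc}}$ precisely when $\alpha p<d$. We would choose $\alpha<d/p$ close to $d/p$. For $b$ to be locally bounded we need $\alpha\le 1$, which holds since $d/p<1$. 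Taking $\phi$ compactly supported (say $\phi=0$ for $|x|\ge 2$, smooth in between) keeps $\dive b\in L^p(\R^d)$, and $(\dive b)_-$ is then also in $L^{p,\infty}$, so \eqref{eq:assumptions} holds.

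\textbf{Step 1 (virial identity).} Set $V(t)=\int|x|^2u\,dx$. Using the equation, mass conservation and integration by parts (justified for bounded solutions with finite second moment on their maximal interval $[0,T^*)$),
\begin{equation*}
V'(t)=2d\norm{u_0}_1+2\int x\cdot b\,u^{1+k}\,dx=2d\norm{u_0}_1-2\int |x|^2\phi(|x|)\,u^{1+k}\,dx .
\end{equation*}
Since $V\ge0$, it suffices to show that $V'(t)\le -c<0$ for all $t$ as long as the solution exists. This forces $T^*<\infty$, and by the local theory the $L^\infty$-norm must then blow up.

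\textbf{Step 2 (lower bound for the nonlinear term).} On $B_1$ we have $|x|^2\phi=|x|^{2-\alpha}$. By H\"older's inequality with weights,
\begin{equation*}
\int_{B_1}u\,dx\le\Bigl(\int_{B_1}|x|^{2-\alpha}u^{1+k}\Bigr)^{\frac1{1+k}}\Bigl(\int_{B_1}|x|^{-(2-\alpha)/k}\Bigr)^{\frac k{1+k}} .
\end{equation*}
The last integral is finite iff $(2-\alpha)/k<d$, i.e.\ $k>(2-\alpha)/d$. Because $k>2/d-1/p$, we can pick $\alpha<d/p$ with $(2-\alpha)/d<k$. This is exactly where supercriticality enters. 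Hence
\begin{equation*}
\int|x|^2\phi\,u^{1+k}\ge C_{\alpha,k,d}\Bigl(\int_{B_1}u\Bigr)^{1+k} .
\end{equation*}
Note that the weight $|x|^{-(2-\alpha)/k}$ is integrable near $0$, and we only integrate over $B_1$, so no condition at infinity is needed.

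\textbf{Step 3 (closing; main obstacle).} The difficulty is keeping a fixed fraction of the mass in $B_1$ for all times. Outside $B_1$ we have the elementary bound $\int_{|x|>1}u\le V(t)$. So if $V(0)$ is small, say $V(0)\le\norm{u_0}_1/2$, and $V$ is decreasing, then $\int_{B_1}u\ge\norm{u_0}_1/2$ throughout. We also need the concentration to dominate diffusion, i.e.
\begin{equation*}
2C\bigl(\norm{u_0}_1/2\bigr)^{1+k}>2d\norm{u_0}_1+\delta .
\end{equation*}
Since $k>0$, this holds for large mass $M=\norm{u_0}_1$. We therefore take $u_0=M\rho$ with $\rho$ a fixed bounded bump of unit mass, concentrated so that $\int|x|^2\rho\le 1/4$.

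A continuity argument then closes the estimate. At $t=0$ we have $V'(0)<-\delta$. As long as $V'<0$ on $[0,t]$, we get $V(t)\le V(0)\le M/4$, hence $\int_{B_1}u\ge 3M/4$, hence $V'(t)<-\delta$ again. So $V'<-\delta$ for all $t<T^*$, which gives $T^*\le V(0)/\delta$.

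The remaining technical point is justifying the virial identity: we need finiteness of $V(t)$ and the integration by parts, which we would obtain using cutoffs $|x|^2\chi(x/R)$ together with the local boundedness of $u$ before blow-up.
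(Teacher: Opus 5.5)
Your proof is correct. It rests on the same virial identity as the paper and on essentially the same field near the origin: your exponent $\alpha$ corresponds to the paper's $1-\alpha$, and your conditions $\alpha<d/p$ and $k>(2-\alpha)/d$ are exactly the paper's $kd>1+\alpha>2-d/p$. Where you differ is in how the argument is closed.

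The paper splits $\norm{u}_1$ at a radius optimized in terms of $E$ and $I=\int u^{1+k}|x\cdot b|\,dx$. This bounds the mass by a product of powers of $I$ and $E$, and turns the virial identity into a closed scalar inequality $E'\le 2d\norm{u_0}_1-C\,E^{-\gamma}$ with $\gamma>0$ for small $E$. Because the optimal radius may exceed $\bar\rho$, this needs the lower bound $|b|\ge\beta\bar\rho^{\alpha}$ outside $B_{\bar\rho}$ and a two-case analysis. In return, it gives blow-up for any mass once $E(u_0)$ is small.

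You instead work on the fixed ball $B_1$ and control the exterior mass by Chebyshev, $\int_{|x|>1}u\le V$. You beat the diffusion term by taking the mass $M$ large, and close with a bootstrap on $\{V\le M/4\}$ plus the continuation criterion. This allows a compactly supported $b$, since only $\phi\ge0$ is needed away from the origin. It also avoids the case distinction and makes the cutoff justification of the virial identity straightforward.

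The large-mass requirement is not essential. On $B_r$ with $r\le 1$, your H\"older constant becomes $C\,r^{-(dk-2+\alpha)}$ and the tail bound becomes $V/r^2$. So for any fixed $M$, a small enough $r$ together with $V(0)\le Mr^2/4$ works, i.e.\ $V(0)\lesssim M^{1+2k/(dk-2+\alpha)}$. This matches the scaling implicit in the paper's small-$E$ condition.
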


For the long-time behaviour of the solutions, we consider the assumptions
\begin{equation}\label{eq:assumptions2}
\begin{split}
& k>0, \ p>d,\\
&(\dive b)_-\in L^\infty
  \bigl((0,\infty);L^{p,\infty}(\R^d)\bigr).
\end{split}
\end{equation}

The scenario for the long-time behaviour is described in the following theorem.
\begin{theorem}
\label{Theop:longtime_1}
Assume condition \eqref{eq:assumptions2}. If $k>k_c$, any uniformly bounded solution $u$ in space and time, i.e. $u \in L^\infty_{t,x}$, satisfies
\begin{equation}\label{eq:longtimeest}
\max_{x\in\R^d}|u(t,x)|\le  \mathcal O(t^{-\frac{d}{2}}), \text{ for }t \to \infty.
\end{equation} If $k=k_c$, any well defined solution satisfies \eqref{eq:longtimeest}. Instead, for subcritical cases $k<{k_c}$, there are examples of stationary solutions.
\end{theorem}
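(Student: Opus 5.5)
The plan has three parts matching the three regimes. All of them rest on the mass-concentration function $m(t,s)=\int_0^s u^*(t,r)\,dr$ and the differential inequality it satisfies. Following Bandle, rearranging \eqref{eq:pde} over the superlevel sets of $u(t)$ and applying the isoperimetric inequality gives, for a.e. $t$,
\begin{equation*}
  \partial_t m \le d^2\omega_d^{2/d}s^{2-2/d}\partial_s^2 m + \int_0^s (\dive b)_-^*(t,r)\,\bigl(\partial_s m(t,r)\bigr)^{1+k}\,dr .
\end{equation*}
The drift term comes from $-\int_{\{u>\theta\}}\dive(bu^{1+k})$. We integrate by parts, write $u^{1+k}=\int_0^u (1+k)\tau^k d\tau$ so that only $\dive b$ appears, and then bound by the negative part via Hardy--Littlewood. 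The weak-$L^p$ bound then gives $(\dive b)_-^*(r)\le M r^{-1/p}$ with $M=\sup_t\norm{(\dive b(t))_-}_{L^{p,\infty}}$. I will assume this inequality, since it is the one underlying the barrier \eqref{eq:intro-slope}.

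\emph{Case $k>k_c$.} Here $u$ is bounded, $\norm{u}_\infty\le K$. The drift term is then dominated by $M K^k\int_0^s r^{-1/p}\partial_s m\,dr$, which still scales like the diffusion only when $k=k_c$. I would use a Nash/energy argument instead. Multiply \eqref{eq:pde} by $u^{q-1}$. The drift contributes $\frac{k+q-1}{k+q}\cdot(q-1)\int \dive b\,u^{k+q}$ after integration by parts, up to sign, which is at most $C\int (\dive b)_- u^{k+q}$. By H\"older in Lorentz spaces and the Sobolev embedding $\dot H^1\hookrightarrow L^{2d/(d-2),2}$, this is $\le CM\norm{u}_{L^{(k+q)p',\,\cdot}}^{k+q}$, which is controlled by $\norm{\nabla u^{q/2}}_2^{2\theta}\norm{u}^{\dots}$ with a small factor. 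The point is that $k>k_c$ makes the exponent of $\norm{u}_\infty$ or of a low $L^r$ norm strictly positive. Because $\norm{u(t)}_{L^r}$ for $r$ slightly above $1$ decays once the $L^1$-to-$L^2$ Nash iteration starts, the drift becomes a lower-order perturbation for large $t$. A Moser iteration on time intervals $[t/2,t]$ then gives $\norm{u(t)}_\infty\le Ct^{-d/2}\norm{u_0}_1$. The main obstacle in this case is closing the bootstrap: the drift term must be absorbed using the decay being proved. I would first derive $\norm{u(t)}_2\lesssim t^{-d/4}$ by a continuity argument, with the smallness coming from $K^k$ times the decaying norms, and then iterate.

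\emph{Case $k=k_c$.} Theorem \ref{thm:global-main} gives a bound on $\norm{u}_\infty$, but only up to time $T$. I would instead compare $m$ with the self-similar supersolution $\bar m(t,s)=\Phi(s/t^{d/2})$, which is invariant under the scaling that makes $k_c$ critical. Build $\Phi$ by gluing the stationary barrier with slope $f_\lambda$ near the origin to the heat-type profile of total mass $\norm{u_0}_1$. Because $k=k_c$, the logarithmic divergence of $\int f_0$ lets us choose $\lambda$ so that $\bar m(t_0,\cdot)\ge m(t_0,\cdot)$. The comparison principle for the $m$-inequality then gives $u^*(t,0^+)\le \Phi'(0)t^{-d/2}$.

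\emph{Case $k<k_c$.} I would exhibit stationary solutions explicitly. Take $u=U(|x|)$ radial, for instance the Gaussian $U=e^{-|x|^2}$, and $b=\nabla U/U^{1+k}$, so that $bU^{1+k}=\nabla U$ and $\Delta U=\dive(bU^{1+k})$. Then $b=-2x\,e^{k|x|^2}$, and $(\dive b)_-$ is compactly supported in bounded sets. This fails: $b$ grows and $\dive b=-(2d+4k|x|^2)e^{k|x|^2}$ is not in $L^{p,\infty}$. So instead I would use a polynomially decaying profile $U=(1+|x|^2)^{-\alpha}$. This gives $b\approx -2\alpha x(1+|x|^2)^{k\alpha-1}$ and $\dive b\sim |x|^{2k\alpha-2}$. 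Membership in $L^{p,\infty}$ requires $2k\alpha-2\le -d/p$, while integrability of $U$ requires $2\alpha>d$. Both can be satisfied with $\alpha$ slightly above $d/2$ exactly when $kd<2-d/p$, i.e.\ $k<k_c$. This is the consistency check I would verify first.
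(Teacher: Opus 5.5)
\textbf{The gap is in the case $k>k_c$.} Your Nash--Moser bootstrap has no source of smallness for large data.

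By scaling, once $u\le K$ is used to remove the excess power $u^{k-k_c}$, the drift term in the $L^2$ energy identity is scale-critical, of size $CMK^{k-k_c}\norm{u_0}_1^{k_c}\norm{\nabla u}_2^2$. It can be absorbed only if $MK^{k-k_c}\norm{u_0}_1^{k_c}$ is small. Keeping the full power instead gives $CM\norm{u}_{k/k_c}^{k}\norm{\nabla u}_2^2$, whose prefactor is small only once the decay you want is already known. So the continuity argument cannot be closed.

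Your first attempt, $(u^*)^{1+k}\le K^k u^*$, discards too much in the other direction. It produces the inequality of the linear problem $k=0<k_c$. That inequality is satisfied by mass functions of stationary states (the subcritical example works with $k=0$), so no decay can follow from it.

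The missing idea is to interpolate only down to the critical power:
\begin{equation*}
  (u^*)^{1+k}\le \norm{u}_{L^\infty_{t,x}}^{k-k_c}\,(u^*)^{1+k_c}.
\end{equation*}
Then $m$ satisfies the $k=k_c$ inequality with $a_p$ replaced by $a_p\norm{u}_{L^\infty_{t,x}}^{k-k_c}$. The critical argument applies verbatim, and no smallness is needed because the critical profile can carry any mass.

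\textbf{The critical case needs repair.} Your strategy (self-similar supersolution, comparison, logarithmic divergence) is the paper's, but gluing the two profiles does not produce a supersolution. Write $a_d=d^{2-2/d}\omega_d^{2/d}$ (with $\omega_d$ the area of the unit sphere) and $a_p=p'M$. Set $\hat m=\bar m\Phi(z)$, $z=s(t+\varepsilon)^{-d/2}$, $\varphi=\Phi'$. The supersolution condition reads
\begin{equation*}
  \varphi'+\frac{a_p\bar m^k}{a_d}z^{k_c-1}\varphi^{1+k}+\frac{d}{2a_d}z^{2/d-1}\varphi\le 0 .
\end{equation*}
The stationary slope annihilates the first two terms and leaves $\frac{d}{2a_d}z^{2/d-1}\varphi>0$. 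The heat profile annihilates the first and third terms and leaves the positive drift term. Both pieces are strict subsolutions, so gluing them cannot give a supersolution.

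The paper instead solves this Bernoulli equation exactly (it is linear in $\varphi^{-k}$), obtaining an explicit family $\varphi_\lambda$. Since $\varphi_0(z)\sim cz^{-1}$ at $0$ when $k=k_c$, one can fix $\bar\lambda$ with $\int_0^\infty\varphi_{\bar\lambda}=1$. Take $\bar m>\norm{u_0}_1$ strictly: with mass exactly $\norm{u_0}_1$, as you propose, the initial ordering at large $s$ is not guaranteed.

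\textbf{The drift bound must be local.} These profiles are supersolutions only for the local drift bound $p'Ms^{1-1/p}(m_s)^{1+k}$. You get it by keeping the boundary value outside the integral:
\begin{equation*}
  -\int_{\{u>\theta\}}\dive(bu^{1+k})=-\theta^{1+k}\int_{\{u>\theta\}}\dive b .
\end{equation*}
Your nonlocal term $\int_0^s(\dive b)_-^*(m_s)^{1+k}\,dr$ is larger, since $m_s$ is decreasing, and the profiles fail to be supersolutions for it.

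Your subcritical example coincides with the paper's.
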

The paper is organised as follows. In \Cref{sec:preliminaries}, we introduce the notation, recall Lorentz spaces, and prove some preliminary lemmas. In \Cref{sec:local}, we state the local well-posedness of the solution. \Cref{sec:globex} is dedicated to the proof of global existence. The supercritical blow-up construction is given in \Cref{sec:blowup}. In \Cref{sec:longbehav}, we study the decay behaviour of the solution.

\section{Notation and Preliminaries}\label{sec:preliminaries}

We will use the following notation.
\begin{itemize}
	\item For every positive function $w\in L^1_{loc}(\R^d)$, we denote\begin{itemize}
	\item $\mu_w(\lambda)=|\{w>\lambda\}|$,
    \item $m_w(s)=\int_0^s w^*(r)\ dr$,
    \item $E(w)=\int |x|^2 w(x) \ dx$,
    \item $w_-=\max\{-w,0\}$, $w_+=\max\{w,0\}$,
    \item $D^-\mu_w(s)=\limsup_{h\to0^+} \frac{\mu_w(s)-\mu_w(s-h)}{h}$ the upper left derivative.
	\end{itemize}
	\item $\omega_d$ is the surface area of the $(d-1)$-sphere of radius 1.
	\item For some number $p\in[1,\infty]$, we denote $p'=\frac{p}{p-1}$ the conjugate of $p$.
	\item The letter $C$ will denote some constants that could change line by line. Any other constant will be uniquely fixed.
	\item For every function $v$ in two variables $t,x$, we denote by $v(t)$ the function $x\mapsto v(t,x)$. 
\end{itemize}
\subsection{Lorentz spaces}
\label{sec:lorentz}

We briefly recall the definition and some well known results about the Lorentz space. For any $w:\R^d\to \R$, let $w^*:[0,\infty)\to \R$ be the symmetric decreasing rearrangement defined by
\begin{equation*}
	w^{\ast}(z) = \inf \Big\{\alpha>0: \mathcal L^d\big(\big\{|w|>\alpha\big\}\big)|\leq z\Big\}.
\end{equation*}
Define 
\begin{equation}
	\label{Equa:Lorentz_2}
	\|w\|_{p,q} = \begin{cases}
		\left( \displaystyle \int_0^{\infty} \big[ z^{\frac{1}{p}} w^{*}(z) \big]^q \, \frac{dz}{z} \right)^{\frac{1}{q}} & q \in [1, \infty), p \in [1,\infty), \\
		\sup\limits_{z > 0} \, z^{\frac{1}{p}}  w^{*}(z)   & q = \infty, p \in [1,\infty).
	\end{cases}
\end{equation}
with the notation $\|w\|_{\infty,\infty}=\|w\|_{\infty}$.
\begin{proposition}
	$L^{p}$ is embedded in $L^{p,\infty}$. In particular for every function $w:\R^d \to \R$,
	$$\|w\|_{p,\infty}\le \|w\|_p$$
\end{proposition}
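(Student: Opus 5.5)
The inequality $\|w\|_{p,\infty}\le\|w\|_p$ is Chebyshev's inequality, rewritten in the language of the decreasing rearrangement, so the plan is to reduce it to that. Fix $w$ with $\|w\|_p<\infty$; if $\|w\|_p=\infty$ there is nothing to prove. The case $p=\infty$ is the definition $\|w\|_{\infty,\infty}=\|w\|_\infty$. So assume $1\le p<\infty$. We must show that for every $z>0$
\begin{equation*}
  z^{1/p}\,w^*(z)\le \|w\|_p ,
\end{equation*}
and then take the supremum over $z$.

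\textbf{Step 1 (Chebyshev).} For every $\alpha>0$,
\begin{equation*}
  \alpha^p\,\mathcal L^d(\{|w|>\alpha\})\le \int_{\{|w|>\alpha\}}|w|^p\,dx\le \|w\|_p^p .
\end{equation*}

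\textbf{Step 2 (translate into a statement about $w^*$).} Fix $z>0$. If $w^*(z)=0$ the claim is trivial, so assume $w^*(z)>0$ and take any $\alpha$ with $0<\alpha<w^*(z)$. By the definition of $w^*$ as an infimum, $\alpha$ is not admissible, which means $\mathcal L^d(\{|w|>\alpha\})>z$. This direction needs only the definition, not any right-continuity of the distribution function.

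\textbf{Step 3 (conclude).} Combining Steps 1 and 2 gives $\alpha^p z<\|w\|_p^p$, that is, $\alpha z^{1/p}<\|w\|_p$. Letting $\alpha\uparrow w^*(z)$ yields $z^{1/p}w^*(z)\le\|w\|_p$. Taking $\sup_{z>0}$ gives the inequality, and this inequality is exactly the continuous embedding $L^p\hookrightarrow L^{p,\infty}$.

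An alternative route uses equimeasurability, $\|w\|_p^p=\int_0^\infty (w^*)^p\,dr$, together with the monotonicity of $w^*$:
\begin{equation*}
  z\,(w^*(z))^p\le\int_0^z (w^*)^p\,dr\le \|w\|_p^p .
\end{equation*}
This is equally short, but equimeasurability is not stated earlier in the paper, so I would use the Chebyshev route.

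The only delicate point is the strict-versus-nonstrict inequality in Step 2. Working with $\alpha<w^*(z)$, rather than with $\alpha=w^*(z)$ directly, avoids it.
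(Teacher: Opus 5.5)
Your proposal is correct and is essentially the paper's approach: the paper gives no argument of its own and cites Grafakos, Proposition 1.1.6, whose proof is precisely the Chebyshev bound $\alpha^p\,\mathcal L^d(\{|w|>\alpha\})\le\|w\|_p^p$. Your Step 2, using $\alpha<w^*(z)$ to avoid the strict/non-strict issue, supplies the translation to the rearrangement-based definition of $\|\cdot\|_{p,\infty}$ used here, which the citation leaves implicit.
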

See \cite[Proposition 1.1.6]{grafakos2008classical}
\begin{theorem}[Hardy–Littlewood inequality]
	Given $w_1,w_2$ non-negative functions, it holds
	$$
	\int _{\mathbb {R} ^{n}}w_1(x)w_2(x)\,dx\leq \int _{\mathbb {R} ^{n}}w_1^{*}(z)w_2^{*}(z)\,dz.$$
\end{theorem}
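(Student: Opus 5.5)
The statement immediately above is the classical Hardy–Littlewood rearrangement inequality, $\int w_1w_2\le\int_0^\infty w_1^*w_2^*$, for non-negative $w_1,w_2$. My plan is to prove it through the layer-cake representation. That choice reduces the inequality to an elementary comparison of measures of superlevel sets.

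The first step is to write, for non-negative measurable $w$,
\begin{equation*}
  w(x)=\int_0^\infty \mathbbm{1}_{\{w>\alpha\}}(x)\,d\alpha .
\end{equation*}
Applying this to both $w_1$ and $w_2$ and using Tonelli (all integrands are non-negative), I get
\begin{equation*}
  \int_{\R^d} w_1w_2\,dx=\int_0^\infty\!\!\int_0^\infty \bigl|\{w_1>\alpha\}\cap\{w_2>\beta\}\bigr|\,d\alpha\,d\beta .
\end{equation*}
The same computation on $[0,\infty)$ gives
\begin{equation*}
  \int_0^\infty w_1^*w_2^*\,dz=\int_0^\infty\!\!\int_0^\infty \bigl|\{w_1^*>\alpha\}\cap\{w_2^*>\beta\}\bigr|\,d\alpha\,d\beta .
\end{equation*}

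The second step identifies the superlevel sets of the rearrangements. From the definition $w^*(z)=\inf\{\alpha>0:\mu_w(\alpha)\le z\}$ and the right-continuity of $\mu_w$, one checks that
\begin{equation*}
  w^*(z)>\alpha \iff z<\mu_w(\alpha).
\end{equation*}
Hence $\{w^*>\alpha\}=[0,\mu_w(\alpha))$, which is equimeasurability. Since both superlevel sets of the rearrangements are initial intervals, their intersection is the shorter of the two, so
\begin{equation*}
  \bigl|\{w_1^*>\alpha\}\cap\{w_2^*>\beta\}\bigr|=\min\{\mu_{w_1}(\alpha),\mu_{w_2}(\beta)\}.
\end{equation*}

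The third step is the trivial bound $|A\cap B|\le\min\{|A|,|B|\}$, applied to $A=\{w_1>\alpha\}$ and $B=\{w_2>\beta\}$. Integrating this in $(\alpha,\beta)$ and comparing with the two identities from the first step concludes the proof; the argument is valid even when the integrals are infinite.

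The only delicate point is the equivalence $w^*(z)>\alpha\iff z<\mu_w(\alpha)$. It requires the right-continuity of $\mu_w$, which follows from monotone convergence of the sets $\{w>\alpha+1/n\}\uparrow\{w>\alpha\}$. It also requires care in the case $\mu_w(\alpha)=\infty$, where the equivalence still holds with the interval $[0,\infty)$. Everything else in the argument is Tonelli.
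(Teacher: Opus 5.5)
Your proof is correct and complete. The paper does not prove this statement; it only cites Hardy--Littlewood--P\'olya \cite{hardy1952inequalities}. So you are not reproducing an argument from the text but supplying the standard self-contained layer-cake proof.

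Its only nontrivial input is the identity $\{w^*>\alpha\}=[0,\mu_w(\alpha))$, and your justification of it is right:
\begin{itemize}
\item right-continuity of $\mu_w$ follows from continuity of measure from below, which needs no finiteness;
\item the case $\mu_w(\alpha)=\infty$ works as you say;
\item the value $\alpha=0$, where the restriction $\alpha>0$ in the paper's definition of $w^*$ could matter, is a null set for $d\alpha$, and the equivalence holds there anyway.
\end{itemize}

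Your route buys two things over the bare citation. First, it makes explicit that, with the paper's convention $w^*:[0,\infty)\to\R$, the right-hand side must be read as $\int_0^\infty w_1^*w_2^*\,dz$ rather than as an integral over $\R^n$. Second, the same level-set identity immediately gives the rearrangement facts the paper uses without proof:
\begin{itemize}
\item $(\mathbbm{1}_A)^*=\mathbbm{1}_{[0,|A|)}$, used in Lemma~\ref{lem:exact-set};
\item $(|w|^p)^*=(w^*)^p$, used in the proof of Theorem~\ref{Theo:holder_lorentz}, because $\mu_{|w|^p}(\alpha)=\mu_w(\alpha^{1/p})$ gives both sides the same superlevel sets;
\item $\mu_w(w^*(s))\le s$, used in the proof of \eqref{eq:Mss-level}, because $s\notin\{w^*>w^*(s)\}$ whenever $w^*(s)<\infty$.
\end{itemize}
The citation is shorter, but it leaves these auxiliary identities to the reader, while your argument proves them along the way.
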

See \cite{hardy1952inequalities}.
\begin{theorem}[H\"older’s inequality in Lorentz spaces]
	\label{Theo:holder_lorentz}
	Let $p\in[1,\infty)$ $ p_1\in [1,\infty), p_2 \in [1,\infty]$ such that 
	$$\frac{1}{p}=\frac{1}{p_1}+\frac{1}{p_2},$$
	then 
	$$
	\|w_1w_2\|_{p} \le  \|w_1\|_{p_1,p} \|w_2\|_{p_2,\infty}.
	$$
\end{theorem}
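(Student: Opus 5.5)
The plan is to reduce the statement to the classical H\"older inequality on the half-line $(0,\infty)$, applied to rearrangements, after first passing from $\int |w_1w_2|^p$ to an integral of $(w_1^*w_2^*)^p$. Since $|w_1w_2|^p=|w_1|^p|w_2|^p$ and both factors are non-negative, the Hardy--Littlewood inequality gives
\begin{equation*}
\|w_1w_2\|_p^p=\int_{\R^d}|w_1|^p|w_2|^p\,dx\le\int_0^\infty (|w_1|^p)^*(z)\,(|w_2|^p)^*(z)\,dz .
\end{equation*}
The power map $t\mapsto t^p$ is increasing and continuous on $[0,\infty)$, so it commutes with the decreasing rearrangement. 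This follows directly from the definition of $w^*$ through the distribution function, because $\{|w|^p>\alpha^p\}=\{|w|>\alpha\}$. Hence $(|w_i|^p)^*=(w_i^*)^p$, and the right-hand side equals $\int_0^\infty (w_1^*(z)w_2^*(z))^p\,dz$.

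Next I will insert the weight that matches the Lorentz quasinorms. By the definition \eqref{Equa:Lorentz_2} with $q=\infty$, we have $w_2^*(z)\le z^{-1/p_2}\|w_2\|_{p_2,\infty}$ for every $z>0$. Substituting this bound gives
\begin{equation*}
\int_0^\infty (w_1^*w_2^*)^p\,dz\le \|w_2\|_{p_2,\infty}^p\int_0^\infty \bigl(w_1^*(z)\bigr)^p z^{-p/p_2}\,dz .
\end{equation*}
The relation $1/p=1/p_1+1/p_2$ gives $1-p/p_2=p/p_1$. Therefore
\begin{equation*}
z^{-p/p_2}\,dz=z^{p/p_1}\,\frac{dz}{z},
\end{equation*}
and the remaining integral is exactly $\int_0^\infty [z^{1/p_1}w_1^*(z)]^p\,dz/z=\|w_1\|_{p_1,p}^p$. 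Taking $p$-th roots concludes the proof.

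The only step needing care is the commutation $(|w|^p)^*=(w^*)^p$. With the paper's definition of $w^*$ as an infimum over levels, I check it by noting that $\mu_{|w|^p}(\alpha^p)=\mu_{|w|}(\alpha)$. The infimum defining $(|w|^p)^*(z)$ is therefore the $p$-th power of the infimum defining $w^*(z)$.

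Two degenerate cases need a remark. If $p_2=\infty$, then $p_1=p$, $w_2^*\le\|w_2\|_\infty$, and the weight is trivial, so the same argument goes through with $\|w_1\|_{p,p}=\|w_1\|_p$. If either quasinorm is infinite, there is nothing to prove.
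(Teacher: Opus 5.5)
Your proof is correct and follows essentially the same route as the paper: Hardy--Littlewood applied to $|w_1|^p$ and $|w_2|^p$, then the identity $(|w|^p)^*=(w^*)^p$, then pulling out $\sup_z z^{p/p_2}(w_2^*)^p=\|w_2\|_{p_2,\infty}^p$, and finally recognizing the remaining weighted integral as $\|w_1\|_{p_1,p}^p$. You also spell out the commutation of powers with rearrangement and the exponent identity $1-p/p_2=p/p_1$, both of which the paper leaves implicit.
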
 

\begin{proof}
	The case $p_2=\infty$ is trivial. For $p_2<\infty$, by Hardy–Littlewood inequality
	\begin{equation*}
		\begin{split}
			\|w_1w_2\|_p^p&\le\int(|w_1|^p)^{*}(|w_2|^p)^{*}\\
			&=\int(w_1^{*})^p(w_2^{*})^p\\
			&\le \sup z^\frac{p}{p_2}(w_2^{*})^p\int (z^\frac{1}{p_1}f^*)^p\frac{dz}{z}\\
			&=\|w_1\|_{p_1,p}^p \|w_2\|_{p_2,\infty}^p,
		\end{split}
	\end{equation*}
	where we used $\left(|w|^{p}\right)^{*}=(w^{*})^{p}$, that is easy to prove.
\end{proof}
\begin{theorem}[Young inequality in Lorentz spaces]
\label{Theo:young_lorentz}
Let $p,p_1\in(1,\infty)$, $p_2\in[1,\infty)$ such that 
	$$\frac{1}{p}+1=\frac{1}{p_1}+\frac{1}{p_2},$$
	then 
	$$
	\|w_1\star w_2\|_{p,\infty} \le  C\|w_1\|_{p_1,\infty} \|w_2\|_{p_2}.
	$$
\end{theorem}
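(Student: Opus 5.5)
We prove the inequality through the layer-cake decomposition of $w_2$ and the elementary weak-type bounds for convolution against a single level set; this is essentially O'Neil's argument adapted to the weak target norm. By homogeneity and monotonicity of rearrangements we may assume $w_1,w_2\ge0$ and $\|w_1\|_{p_1,\infty}=1$. Since $p>1$, we use the equivalent quasi-norm
\begin{equation*}
  \|f\|_{p,\infty}\simeq \sup_{E}\,|E|^{-1/p'}\int_E|f|,
\end{equation*}
the supremum being over sets of finite positive measure, with constants depending only on $p$. This is the standard normability of $L^{p,\infty}$ for $p>1$, and it follows from $\int_E |f|\le\int_0^{|E|}f^*$ together with $\int_0^t z^{-1/p}\,dz=p'\,t^{1/p'}$. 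Fix such a set $E$. By Fubini,
\begin{equation*}
  \int_E w_1\star w_2=\int_{\R^d} w_2(y)\,\bigl(w_1\star\mathbbm 1_{-E}\bigr)(y)\,dy ,
\end{equation*}
so the problem reduces to estimating $g:=w_1\star\mathbbm 1_{-E}$ and then pairing it with $w_2$.

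\textbf{Step 1: a two-sided bound on $g$.} Applying the Hardy–Littlewood inequality to $w_1(y-\cdot)$ and $\mathbbm 1_{-E}$ gives
\begin{equation*}
  g(y)\le\int_0^{|E|}w_1^*(z)\,dz\le\int_0^{|E|}z^{-1/p_1}\,dz=p_1'\,|E|^{1/p_1'},
\end{equation*}
which is finite because $p_1>1$. In addition, Fubini gives $\|g\|_1=\|w_1\|_1|E|$, but this is infinite in general. We therefore split $w_1=w_1\mathbbm 1_{\{w_1>\tau\}}+w_1\mathbbm 1_{\{w_1\le\tau\}}$ at a height $\tau$ to be chosen. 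The tail part has $L^1$ norm at most $C\tau^{1-p_1}$ and the bounded part has $L^\infty$ norm at most $\tau$. Convolving with $\mathbbm 1_{-E}$ then yields $g\le g_1+g_2$, where
\begin{equation*}
  \|g_1\|_1\le C\tau^{1-p_1}|E|,\qquad \|g_2\|_\infty\le\tau|E| .
\end{equation*}
Interpolating between these (equivalently, computing $g^*$ directly) places $g$ in $L^{p_2',\infty}$ with
\begin{equation*}
  \|g\|_{p_2',\infty}\le C|E|^{\theta}.
\end{equation*}
The exponent $\theta$ is forced by scaling: the relation $1/p+1=1/p_1+1/p_2$ gives $1/p_2'=1/p_1-1/p'$, hence $1/p'-1/p_2'=1/p_1'$ and $\theta=1/p'$.

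\textbf{Step 2: pairing with $w_2$.} If $p_2=1$, then $p=p_1$, and the bound $\sup g\le C|E|^{1/p'}$ from Step 1 gives the claim directly. If $p_2>1$, we apply the Hardy–Littlewood inequality and then Hölder:
\begin{equation*}
  \int w_2 g\le\int_0^\infty w_2^*g^*\le\|w_2\|_{p_2,1}\,\|g\|_{p_2',\infty}.
\end{equation*}
This is where the main obstacle lies, because the statement asks for $\|w_2\|_{p_2}$ and not $\|w_2\|_{p_2,1}$, which is larger. To close this gap we must use the full two-parameter information on $g$, namely the uniform bound from Step 1 together with the $\tau$-splitting, rather than only its weak-$L^{p_2'}$ norm.

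We would first try to decompose $w_2$ dyadically in height, $w_2\approx\sum_j 2^j\mathbbm 1_{F_j}$. For each piece we choose $\tau=\tau_j$ optimally depending on $|F_j|$ and $|E|$, which gives a term of size $2^j\min\bigl(C|F_j||E|^{1/p_1'},\ \ldots\bigr)$. We then sum in $j$, using $\sum_j 2^{jp_2}|F_j|\simeq\|w_2\|_{p_2}^{p_2}$. The summation only converges with an $\ell^{p_2}$ weight rather than $\ell^1$ if the optimized bound contains a geometric factor. The natural source of that factor is the strict inequality $p_1>1$ in the choice of $\tau_j$, which should make the series geometric around the index $j_0$ where $|F_j|$ is comparable to $|E|$. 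If this dyadic summation fails, the fallback is the Marcinkiewicz interpolation route. Fix $w_1$ and regard $T w_2:=w_1\star w_2$. We have the endpoint $L^1\to L^{p_1,\infty}$, which is trivial from the normable norm, and a restricted weak-type bound near $L^{p_1'}\to L^\infty$ coming from Step 1. Marcinkiewicz interpolation along the line $1/p=1/p_1+1/p_2-1$ then yields $L^{p_2}\to L^{p,\infty}$ with constant $C\|w_1\|_{p_1,\infty}$. Only weak-type control of the target is required, so real interpolation applies without loss.
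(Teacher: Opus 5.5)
The paper gives no proof of this statement; it only refers to Grafakos' Theorem~1.4.25, so your proposal has to stand on its own. The normability reduction, Step~1 and the case $p_2=1$ are correct, up to two harmless slips. First, with $g(y)=\int_E w_1(x-y)\,dx$ one has $g=\tilde w_1\star\mathbbm{1}_E$ with $\tilde w_1(z)=w_1(-z)$. Second, the scaling relation is $1/p_2'=1/p_1-1/p$. You are also right that pairing $w_2$ against $\norm{g}_{p_2',\infty}$ only controls $\norm{w_2}_{p_2,1}$. Your interpolation fallback does close the argument, but only in its restricted-weak-type (Stein--Weiss, off-diagonal) form. The map $w_2\mapsto w_1\star w_2$ is in general not of weak type $(p_1',\infty)$: for $w_1=|x|^{-d/p_1}\notin L^{p_1}_{\mathrm{loc}}$ one has $\sup_{\norm{w_2}_{p_1'}\le1}\norm{w_1\star w_2}_\infty=\infty$. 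So the classical Marcinkiewicz theorem does not apply, and the issue lies on the source side, not the target side as your last sentence suggests. With the right theorem, interpolating the restricted bounds at $(1,p_1)$ and $(p_1',\infty)$ even gives $L^{p_2}\to L^{p,p_2}\subset L^p$ for $p_2>1$. Note, however, that this route uses from Step~1 only $\sup g\le p_1'|E|^{1/p_1'}$, so your duality and the $L^{p_2',\infty}$ bound on $g$ become unnecessary.

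Your dyadic route also closes, but not for the reason you give, and it needs no $\ell^{p_2}$ summation. Normalize $\norm{w_1}_{p_1,\infty}=\norm{w_2}_{p_2}=1$ and set $F_j=\{2^j<w_2\le 2^{j+1}\}$, so $|F_j|\le 2^{-jp_2}$. Since $\int_{F_j}g=\int_E w_1\star\mathbbm{1}_{F_j}$, the Hardy--Littlewood bound of Step~1, applied once to $E$ and once to $F_j$, gives $\int_{F_j}g\le p_1'\min\{|F_j|\,|E|^{1/p_1'},\,|E|\,|F_j|^{1/p_1'}\}$. The second bound is exactly what optimizing your $\tau$ yields. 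Hence
\begin{equation*}
  \int_{\R^d}w_2\,g\le\sum_j 2^{j+1}\int_{F_j}g
  \le C\sum_j\min\bigl\{2^{j(1-p_2)}|E|^{1/p_1'},\,2^{j(1-p_2/p_1')}|E|\bigr\}
  \le C|E|^{1/p'} .
\end{equation*}
The series is geometric on both sides of $2^{j_0}=|E|^{-1/p_2}$ because $1-p_2<0<1-p_2/p_1'$. That is, the decay comes from $1<p_2<p_1'$ (equivalently $p_2>1$ and $p<\infty$), not from $p_1>1$. Moreover, only $\sup_j 2^{jp_2}|F_j|\le 1$ is used, which is why the conclusion is merely of weak type.

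For comparison, the shortest proof avoids duality entirely. Apply your height splitting of $w_1$ directly to $w_1\star w_2$, with the height tied to the level $\alpha$: take $\mu\simeq\alpha^{p_2'/(p_2'-p_1)}$ so that $\norm{w_1\mathbbm{1}_{\{w_1\le\mu\}}}_{p_2'}\norm{w_2}_{p_2}\le\alpha/2$, which is possible since $p_2'>p_1$. Then bound $|\{w_1\mathbbm{1}_{\{w_1>\mu\}}\star w_2>\alpha/2\}|$ by Chebyshev and Young's inequality $L^1\star L^{p_2}\to L^{p_2}$. The exponents combine to $C\alpha^{-p}$, and the case $p_2=1$ is covered as well.
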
 
See, for instance, \cite[Theorem 1.4.25]{grafakos2008classical}.
\subsection{Symmetric Decreasing Rearrangement lemmas}
We prove some well-known facts about Symmetric Decreasing Rearrangement lemmas that will be useful in the paper
\begin{lemma}
\label{lem:exact-set}
Let $w$ non-negative integrable function, and fix $s>0$. It holds
\begin{equation*}
    m_w(s):=\int_0^s w^*(r)\, d r=\max_{\substack{A\subset\R^d\\ |A|=s}}
    \int_A w(x)\,d x
\end{equation*}
and a measurable set $A\subset\R^d$ with $|A|=s$
is a maximizer if and
only if, up to negligible sets,
\begin{equation*}
    \{w>w^*(s)\}
    \subseteq A
    \subseteq
    \{w\ge w^*(s)\}. 
\end{equation*}
In particular, if $|\{w=w^*(s)\}|=0$, then one can choose
\begin{equation*}
    \int_0^s w^*(r)\, dr= \int_{\{w>w^*(s)\}} w (x)\,d x, 
\end{equation*}
\end{lemma}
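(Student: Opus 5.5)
The plan is to compare $\int_A w$ with $\int_{A^\star} w$, where $A^\star$ is a set squeezed between $\{w>w^*(s)\}$ and $\{w\ge w^*(s)\}$. Write $\tau:=w^*(s)$ and use the distribution function $\mu_w$. By the definition of $w^*$ and right-continuity of $\mu_w$, we have $\mu_w(\tau)=|\{w>\tau\}|\le s$. For every $\alpha<\tau$ we have $\mu_w(\alpha)>s$, so $|\{w\ge\tau\}|=\lim_{\alpha\uparrow\tau}\mu_w(\alpha)\ge s$; when $\tau=0$ we read $\{w\ge 0\}=\R^d$. Since Lebesgue measure is non-atomic, there exists $A^\star$ with $\{w>\tau\}\subseteq A^\star\subseteq\{w\ge\tau\}$ and $|A^\star|=s$. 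Then $w$ and $w^*$ are equimeasurable, and $w^*>\tau$ on $[0,\mu_w(\tau))$ while $w^*=\tau$ on $[\mu_w(\tau),s]$. The layer-cake formula therefore gives
\begin{equation*}
\int_{A^\star} w=\int_{\{w>\tau\}}w+\tau\bigl(s-\mu_w(\tau)\bigr)=\int_0^{\mu_w(\tau)}w^*+\int_{\mu_w(\tau)}^s w^* = m_w(s).
\end{equation*}
The identity $\int_{\{w>\tau\}}w=\int_0^{\mu_w(\tau)}w^*$ comes from $\int_{\{w>\tau\}}w=\int_0^\infty|\{w>\max(\tau,\lambda)\}|\,d\lambda$ together with the same computation for $w^*$ on $[0,\infty)$.

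Next, for an arbitrary measurable $A$ with $|A|=s$, I will prove $\int_A w\le\int_{A^\star}w$ by the standard exchange argument. Write
\begin{equation*}
\int_{A^\star}w-\int_A w=\int_{A^\star\setminus A}w-\int_{A\setminus A^\star}w .
\end{equation*}
The two difference sets have equal measure, because $|A|=|A^\star|=s<\infty$. On $A^\star\setminus A$ we have $w\ge\tau$, and on $A\setminus A^\star$ we have $w\le\tau$ (the complement of $A^\star$ lies in $\{w\le\tau\}$). Hence the difference is $\ge\tau|A^\star\setminus A|-\tau|A\setminus A^\star|=0$. Integrability of $w$ ensures all terms are finite. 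This gives the max formula, and the maximum is attained at $A^\star$.

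For the characterization of maximizers, the sufficiency direction follows because any $A$ between the two level sets can serve as $A^\star$ above. For necessity, suppose $A$ is a maximizer, so the difference above is zero. Equality in the chain forces
\begin{equation*}
\int_{A^\star\setminus A}(w-\tau)=0 \quad\text{and}\quad \int_{A\setminus A^\star}(\tau-w)=0,
\end{equation*}
with both integrands nonnegative. Choose $A^\star=\{w>\tau\}\cup(A\cap\{w=\tau\})$ trimmed or extended inside $\{w=\tau\}$ to measure $s$. The first equality then gives $|\{w>\tau\}\setminus A|=0$, and the second gives $|A\cap\{w<\tau\}|=0$. This is exactly the stated inclusion up to null sets. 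The main delicate point is this choice of comparison set, together with the borderline case $\tau=0$. In that case $\{w\ge0\}$ is everything, the condition $A\supseteq\{w>0\}$ is the only content, and the argument goes through unchanged.

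Finally, suppose $|\{w=\tau\}|=0$. Then $|\{w>\tau\}|=|\{w\ge\tau\}|\ge s$ combined with $\mu_w(\tau)\le s$ gives $|\{w>\tau\}|=s$. So $A=\{w>\tau\}$ is admissible and is a maximizer, which yields the last formula.
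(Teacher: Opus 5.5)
Your proof is correct. It reaches the result by a somewhat different route from the paper in the first half, while the equality analysis is the same.

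\textbf{The paper's route.} It gets the upper bound $\int_A w\le m_w(s)$ from the Hardy--Littlewood inequality, using $(1_A)^*=1_{(0,s)}$. It then characterizes equality through the identity
\[
\int_0^s w^*-\int_A w=\int_{\{w>w^*(s)\}\setminus A}\bigl(w-w^*(s)\bigr)\,dx+\int_{A\cap\{w<w^*(s)\}}\bigl(w^*(s)-w\bigr)\,dx .
\]
This identity relies on $\int_0^s w^*=\int_{\{w>w^*(s)\}}(w-w^*(s))\,dx+w^*(s)\,s$, which the paper states without proof. The paper also never checks that $|\{w>w^*(s)\}|\le s\le|\{w\ge w^*(s)\}|$. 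So it never shows that a set sandwiched between the two level sets with measure exactly $s$ exists, and attainment of the maximum is left implicit.

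\textbf{Your route.} You supply exactly these facts.
\begin{itemize}
\item You prove the two measure inequalities from right-continuity of $\mu_w$ and continuity of measure.
\item You build $A^\star$ by non-atomicity.
\item You compute $\int_{A^\star}w=m_w(s)$ by equimeasurability and the layer-cake formula.
\item You obtain both the bound and attainment from the bathtub exchange against $A^\star$, so Hardy--Littlewood is never needed.
\end{itemize}
What the paper's version buys is brevity, given the standard rearrangement facts. What yours buys is a self-contained argument that actually justifies those facts. The paper's identity alone already implies the inequality, so its Hardy--Littlewood step is in fact redundant.

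\textbf{A minor remark on necessity.} The step is not delicate, and the tailored choice of $A^\star$ there is unnecessary. For every admissible $A^\star$, one has $\{w>w^*(s)\}\setminus A\subseteq A^\star\setminus A$ and $A\cap\{w<w^*(s)\}\subseteq A\setminus A^\star$. Hence the vanishing of the two nonnegative integrals gives the stated inclusions for any fixed choice of $A^\star$.
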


\begin{proof} For every measurable set $A\subset\R^d$ with $|A|=s$, the Hardy-Littlewood inequality gives
\begin{equation*}
    \int_A w
    =
    \int_{\R^d}w1_A
    \leq
    \int_0^\infty w^*(r)(1_A)^*(r)\,d r=\int_0^s w^*(r)\,d r,
\end{equation*}
since $( 1_A)^*= 1_{(0,s)}$.
On the other hand, if $|A|=s$, it holds
\begin{equation*}
    \int_0^s w^*=\int_{\{w>w^*(s)\}}(w-w^*(s))\,d x+w^*(s)s
\end{equation*}
and 
\begin{equation*}
    \int_A w=\int_{A\cap \{w>w^*(s)\}}(w-w^*(s))\,d x-\int_{A\cap \{w<w^*(s)\}}(w-w^*(s))\,d x+w^*(s)s
\end{equation*}
so that
\begin{equation*}
\begin{aligned}
    \int_0^s w^*-\int_A w
    &=\int_{\{w>w^*(s)\}\setminus A} (w-w^*(s))\,d x +\int_{A\cap\{w<w^*(s)\}}(w^*(s)-w)\,d x.
\end{aligned}
\end{equation*}
 The equality holds if and only if
\begin{equation*}
    \{w>w^*(s)\}\subseteq A
    \subseteq\{w\geq w^*(s)\}
\end{equation*}
up to negligible sets. This concludes the proof.
\end{proof}
We remind the notation $\mu_w(\lambda):= |\{w>\lambda\}|$. 
\begin{lemma}\label{lem:derivative_coarea}
For every $w\in C^1(\R^d)$, for the upper left derivative of $\mu$, for a.e. $\lambda$, the following holds
\begin{equation}\label{eq:distribution-derivative}
 D^-\mu_w(\lambda)\le-\int_{\{w=\lambda\}}\frac1{|\nabla w|}\,d\mathcal H^{d-1}.
\end{equation}
\end{lemma}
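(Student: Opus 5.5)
We split the level set $\{w\le \lambda\}$ into its critical and regular parts, and apply the coarea formula to the regular part. Set $Z=\{\nabla w=0\}$. By Sard's theorem for $C^1$ functions the set of critical values $w(Z)$ is only guaranteed to be negligible when $d=1$. For general $d$ we therefore avoid Sard and let the set $Z$ itself absorb the critical contribution. For $h>0$ we have
\begin{equation*}
\mu_w(\lambda-h)-\mu_w(\lambda)=|\{\lambda-h<w\le\lambda\}|\ge |\{\lambda-h<w\le \lambda\}\setminus Z|.
\end{equation*}
Dropping the part lying in $Z$ only decreases the right-hand side, and this is consistent with the inequality (not equality) in \eqref{eq:distribution-derivative}.

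On the open set $\R^d\setminus Z$ we apply the coarea formula to the Lipschitz-on-compacts function $w$ with the integrand $1_{\{\lambda-h<w\le\lambda\}}|\nabla w|^{-1}$, which is non-negative and measurable there. This gives
\begin{equation*}
|\{\lambda-h<w\le \lambda\}\setminus Z|=\int_{\lambda-h}^{\lambda}\Phi(\tau)\,d\tau,\qquad \Phi(\tau):=\int_{\{w=\tau\}\setminus Z}\frac{1}{|\nabla w|}\,d\mathcal H^{d-1}.
\end{equation*}
The identity is valid via monotone convergence, exhausting $\R^d\setminus Z$ by compact sets on which $|\nabla w|$ is bounded below. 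Taking $\tau\to-\infty$ up to $\lambda$, the left-hand side is bounded by $|\{w\le\lambda\}|$. This quantity might be infinite, so we localize: it suffices to work with $\lambda>0$ when $w\ge0$ is integrable, since then $|\{w>\lambda-h\}|<\infty$ for $\lambda-h>0$. In that regime $\Phi\in L^1_{loc}(0,\infty)$, because $\int_a^b\Phi\le|\{a<w\le b\}|<\infty$ for $a>0$. For general $w$ we restrict to the $\lambda$ at which $\mu_w$ is finite nearby, and on the complementary half-line the statement is vacuous or trivial.

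By the Lebesgue differentiation theorem, for a.e. $\lambda$ we have $\frac1h\int_{\lambda-h}^\lambda\Phi\to\Phi(\lambda)$. We now recover the upper left derivative $D^-\mu_w$ from the difference quotient:
\begin{equation*}
\frac{\mu_w(\lambda)-\mu_w(\lambda-h)}{h}\le-\frac1h\int_{\lambda-h}^\lambda\Phi(\tau)\,d\tau \longrightarrow -\Phi(\lambda).
\end{equation*}
Hence $D^-\mu_w(\lambda)\le -\Phi(\lambda)$ for a.e. $\lambda$. It remains to identify $\Phi(\lambda)$ with the integral over the whole level set $\{w=\lambda\}$ in \eqref{eq:distribution-derivative}. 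We interpret $1/|\nabla w|=+\infty$ on $Z$. By the coarea formula applied on $Z$, we get $\int_\R\mathcal H^{d-1}(\{w=\tau\}\cap Z)\,d\tau=\int_Z|\nabla w|=0$. Therefore $\mathcal H^{d-1}(\{w=\lambda\}\cap Z)=0$ for a.e. $\lambda$, and for those $\lambda$ the two integrals coincide.

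The main obstacle is this handling of the critical set $Z$. The points to check are that the a.e. exceptional sets (from the Lebesgue differentiation theorem, from the null $\mathcal H^{d-1}$-measure of $Z\cap\{w=\lambda\}$, and from finiteness of $\mu_w$) are countably many null sets, and that $\Phi$ is locally integrable so that differentiation applies. The remaining steps are routine.
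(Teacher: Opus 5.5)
Your proof is correct and follows the same route as the paper. You discard the critical set $\{\nabla w=0\}$, apply the coarea formula on its complement to get $\mu_w(a)-\mu_w(b)\ge\int_a^b\int_{\{w=r\}}|\nabla w|^{-1}\,d\mathcal H^{d-1}\,dr$, and then differentiate; along the way you make explicit two steps the paper leaves implicit, namely the Lebesgue differentiation (with local integrability from finiteness of $\mu_w$) and the fact that $\mathcal H^{d-1}(\{w=\lambda\}\cap\{\nabla w=0\})=0$ for a.e. $\lambda$, which is what justifies integrating over the whole level set.
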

\begin{proof}
For $a<b$, the coarea formula gives
\begin{equation*}
  \mu_w(a)-\mu_w(b)\ge\{a<w\le b,\ |\nabla w|>0\}
  =\int_a^{b}
  \left(\int_{\{w=r\}}\frac1{|\nabla w|}
  \,d\mathcal H^{d-1}\right)d r.
\end{equation*}
\end{proof}

\begin{proposition}
For almost every s, it holds
\begin{equation}\label{eq:Mss-level}
	\frac d {ds}w^*(s)\ge-\frac1{\int_{\{w=w^*(s)\}}\frac1{|\nabla w|}\,d\mathcal H^{d-1}}.
\end{equation}
\end{proposition}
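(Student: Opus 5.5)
The estimate follows by viewing $w^*$ as the generalized inverse of the distribution function $\mu_w$ and applying Lemma \ref{lem:derivative_coarea}. We take $w\in C^1(\R^d)$ non-negative, as in that lemma. Two facts are standard: $\mu_w$ is non-increasing and right-continuous, and $w^*(s)=\inf\{\alpha>0:\mu_w(\alpha)\le s\}$ is non-increasing in $s$. Monotonicity gives, by Lebesgue's theorem, that $w^*$ is differentiable for a.e. $s$ with $\frac{d}{ds}w^*(s)\le 0$, and that $\mu_w$ is differentiable at a.e. $\lambda$.

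We split the $s$-axis into two regions and treat each separately.

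\emph{Plateaus of $w^*$.} On the set of $s$ where $w^*$ is locally constant, the derivative is $0$. Since the right-hand side of \eqref{eq:Mss-level} is $\le 0$, the inequality is trivial there.

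\emph{Strictly decreasing part.} On the remaining set, take $s$ with $\lambda=w^*(s)$. Away from the countably many jumps of $\mu_w$ (the values $\lambda$ with $|\{w=\lambda\}|>0$), we have $\mu_w(w^*(s))=s$. For $h>0$ small, set $\lambda_h=w^*(s-h)\ge\lambda$. Then
\begin{equation*}
\frac{w^*(s-h)-w^*(s)}{h}=\frac{\lambda_h-\lambda}{\mu_w(\lambda)-\mu_w(\lambda_h)}.
\end{equation*}
As $h\to0^+$ we have $\lambda_h\to\lambda^+$. The denominator ratio $\frac{\mu_w(\lambda)-\mu_w(\lambda_h)}{\lambda_h-\lambda}$ tends to $-\mu_w'(\lambda)$, which by \eqref{eq:distribution-derivative} is at least $\int_{\{w=\lambda\}}|\nabla w|^{-1}\,d\mathcal H^{d-1}$. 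Taking reciprocals, the left difference quotient of $w^*$ is bounded below by the negative reciprocal of that integral. At points of differentiability the left derivative equals $\frac{d}{ds}w^*(s)$, which gives \eqref{eq:Mss-level}. When the integral is $0$, the right-hand side is read as $-\infty$ and there is nothing to prove. Conversely, if $\mu_w'(\lambda)=-\infty$, the derivative of $w^*$ vanishes, which is again fine.

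\emph{Main obstacle: null sets.} Lemma \ref{lem:derivative_coarea} holds only for a.e. $\lambda$, while \eqref{eq:Mss-level} must hold for a.e. $s$. The bad $\lambda$-set $N$ has to be pulled back through $w^*$. I would show that $(w^*)^{-1}(N)$ is null in $s$ as follows:
\begin{itemize}
\item On the strictly decreasing part, $w^*$ is monotone. By the Lebesgue differentiation argument for monotone functions, the set of $s$ with $w^*(s)\in N$ and $(w^*)'(s)\neq 0$ is null; indeed $\int|(w^*)'|\,\mathbf 1_{(w^*)^{-1}(N)}\,ds\le|N|=0$.
\item On the set where $(w^*)'(s)=0$, the inequality is automatic.
\item Preimages of jump values of $\mu_w$ are intervals on which $w^*$ is constant, so they fall into the plateau case.
\end{itemize}
This covers the exceptional sets and completes the argument.
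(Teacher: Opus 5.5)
Your overall strategy is the same as the paper's: invert $\mu_w$ where $w^*$ is strictly decreasing, then apply Lemma~\ref{lem:derivative_coarea}. However, there is a gap at the displayed identity
\[
\frac{w^*(s-h)-w^*(s)}{h}=\frac{\lambda_h-\lambda}{\mu_w(\lambda)-\mu_w(\lambda_h)} .
\]
This identity needs $\mu_w(w^*(s-h))=s-h$. You only established $\mu_w(w^*(s))=s$, and only at the point $s$ itself. If $s-h$ lies in a plateau of $w^*$ beyond its left endpoint, then $\mu_w(\lambda_h)$ equals that endpoint, which is $<s-h$. Such plateaus can accumulate at $s$ from the left even where $(w^*)'(s)\neq0$; for instance, take $w$ to be a locally finite sum of disjoint flat-topped bumps with a dense set of heights.

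In general you only have $\mu_w(\lambda_h)\le s-h$, that is, $\mu_w(\lambda)-\mu_w(\lambda_h)\ge h$. This points the wrong way for your purpose. It bounds from below the quotient $\frac{\mu_w(\lambda)-\mu_w(\lambda_h)}{\lambda_h-\lambda}$, which the coarea lemma also bounds from below. So the two cannot be chained into the lower bound on $\frac{h}{\lambda_h-\lambda}$ that you need.

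The paper avoids this by differencing to the right in $s$, hence to the left in $\lambda$, which is exactly what $D^-\mu_w$ in Lemma~\ref{lem:derivative_coarea} measures. Set $\delta_h=w^*(s)-w^*(s+h)>0$. The always-true inequality $\mu_w(w^*(s+h))\le s+h$, together with $\mu_w(w^*(s))=s$, gives $\frac{\mu_w(\lambda)-\mu_w(\lambda-\delta_h)}{\delta_h}\ge-\frac{h}{\delta_h}\to\frac{1}{(w^*)'(s)}$. This is precisely the required direction.

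Alternatively, you can keep your side as follows. By the definition of the infimum, $\mu_w(\alpha)>s-h$ for every $\alpha<\lambda_h$. Hence $\frac{\mu_w(\lambda)-\mu_w(\alpha)}{\alpha-\lambda}<\frac{h}{\alpha-\lambda}$ for $\alpha\in(\lambda,\lambda_h)$, and you then let $\alpha\uparrow\lambda_h$.

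With either repair the rest of your argument goes through. Your treatment of the exceptional $\lambda$-set, via $\int_{(w^*)^{-1}(N)}|(w^*)'|\,ds\le|N|=0$ for $N$ Borel, is a genuine improvement. The paper applies the a.e.-$\lambda$ lemma at $\lambda=w^*(s)$ without checking that the pull-back of the bad set is null in $s$.
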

\begin{proof}
Since $w^*$ is decreasing, it is differentiable a.e. At any point where $D [w^*](s)=0$, the result is immediate. Consider $s$ such that $D [w^*](s)<0$. We claim that 
\begin{equation}\label{eq:inverseidentity}
    \mu_w(w^*(s))=s.
\end{equation}
By definition of symmetric decreasing rearrangement, it holds $\mu_w(w^*(s))\le s$. If by contradiction, $\mu_w(w^*(s))<s$, then, by the definition of the generalized inverse,
$w^*(r)=w^*(s)$, for every $r\in(\mu_w(w^*(s)),s)$. Hence $D^-[w^*](s)=0$, that is a contradiction.

For $h>0$, let $\delta_h:=w^*(s)-w^*(s+h)>0.$ For equation \eqref{eq:inverseidentity}
\begin{equation*}
D^-\mu_w(w^*(s))\ge\lim_{h \to 0}\frac{\mu_w(w^*(s))-\mu_w(w^*(s)-\delta_h)}{\delta_h}
\ge
-\lim_{h \to 0^+}\frac{h}{\delta_h}=\frac 1{d/ds \,w^*(s)}
\end{equation*}
The thesis follows by Lemma \ref{lem:derivative_coarea}.
\end{proof}

\subsection{Stampacchia's property} We remind a corollary of Stampacchia's property that will be useful later.
\begin{lemma}\label{lemma:stamp}
    For any $w\in W^{1,r}$, $r\ge1$ it holds
    \begin{equation*}
        \nabla u=0 \qquad\text{for a.e. }x\in\{u=0\},
    \end{equation*}
    and for any $w\in W^{2,r}$, $r\ge1$
    \begin{equation*}
        D^2u=0 \qquad\text{for a.e. }x\in\{u=0\}.
    \end{equation*}
\end{lemma}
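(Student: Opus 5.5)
The first claim is the classical Stampacchia property, and the second follows by applying the first to each first derivative. Throughout I read the statement with $w$ in place of $u$, since this is clearly intended. Everything is local, so it suffices to work on balls.

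\textbf{First-order statement.} Let $w\in W^{1,r}$. I would use the chain rule for Lipschitz compositions with Sobolev functions. For $\varepsilon>0$, set $G_\varepsilon(t)=\min\{\max\{t,0\},\varepsilon\}$ (and the analogous version for negative values). Then $G_\varepsilon\circ w\in W^{1,r}_{loc}$ and
\begin{equation*}
\nabla (G_\varepsilon\circ w)=1_{\{0<w<\varepsilon\}}\nabla w \quad\text{a.e.}
\end{equation*}
This is the standard truncation lemma for $w^+$, applied twice: once to $w^+$ and once to $(w-\varepsilon)^+$. Applying it to $w^+$ and $w^-$ gives $\nabla w^+=1_{\{w>0\}}\nabla w$ and $\nabla w^-=-1_{\{w<0\}}\nabla w$. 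Since $w=w^+-w^-$, we obtain
\begin{equation*}
\nabla w=1_{\{w\neq0\}}\nabla w \quad\text{a.e.},
\end{equation*}
that is, $\nabla w=0$ a.e. on $\{w=0\}$.

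The truncation lemma itself is the only real content. I would prove it by approximating $t\mapsto t^+$ with smooth convex functions $\phi_\varepsilon(t)=\sqrt{t^2+\varepsilon^2}-\varepsilon$ for $t>0$ and $0$ otherwise, using the smooth chain rule, and passing to the limit by dominated convergence in the weak formulation. The limit derivative is $1_{\{w>0\}}\nabla w$, because $\phi_\varepsilon'(w)\to 1_{\{w>0\}}$ pointwise and $|\phi_\varepsilon'|\le 1$. This limit step is where I expect the main work to be: one must check that the pointwise limit of $\phi_\varepsilon'$ at $t=0$ equals $0$, and that the identity holds in $L^1_{loc}$.

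\textbf{Second-order statement.} Let $w\in W^{2,r}$. By the first part, $\nabla w=0$ a.e. on $Z=\{w=0\}$. Each $\partial_i w$ lies in $W^{1,r}$, so the first part applied to $\partial_i w$ gives $\nabla\partial_i w=0$ a.e. on $\{\partial_i w=0\}$. The latter set contains $Z$ up to a null set. Hence $D^2w=0$ a.e. on $Z$, for every row $i$.
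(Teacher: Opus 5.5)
Your proof is correct. The paper does not prove this lemma itself; it only cites \cite[Appendix C, Lemma 4]{grunau2024biharmonic}. What you wrote is the standard argument behind such statements, so it makes the lemma self-contained:
\begin{itemize}
\item The truncation lemma $\nabla w^{\pm}=\pm\mathbf 1_{\{\pm w>0\}}\nabla w$ is obtained via the $C^1$ approximations $\phi_\varepsilon$ and dominated convergence in the weak formulation, as in Gilbarg--Trudinger. It gives $\nabla w=\mathbf 1_{\{w\neq0\}}\nabla w$ a.e.
\item For the second-order claim, you apply this to each $\partial_i w\in W^{1,r}$ and use $\{w=0\}\subseteq\{\partial_i w=0\}$ up to a null set.
\end{itemize}

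Keep your remark that everything is local. The paper applies the lemma to $u(t)-u^*(t,s)$, which lies only in $W^{1,r}_{\mathrm{loc}}$, and your argument works verbatim in $W^{1,1}_{\mathrm{loc}}$.

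Two cosmetic points:
\begin{itemize}
\item $\phi_\varepsilon$ is only $C^1$, not smooth, at $0$. What you actually use is the chain rule for $C^1$ functions with bounded derivative, which is fine.
\item The paragraph on $G_\varepsilon$ is never used and should be dropped. The two truncations give $\mathbf 1_{\{0<w\le\varepsilon\}}\nabla w$. Replacing this by $\mathbf 1_{\{0<w<\varepsilon\}}\nabla w$ already presupposes the property you are proving, applied to $w-\varepsilon$.
\end{itemize}
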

See \cite[Appendix C, Lemma 4]{grunau2024biharmonic}
\section{Local existence and continuation}\label{sec:local}
The purpose of this section is to demonstrate the well-posedness and non-negativity of the solution, as well as to justify certain regularity assumptions that will prove useful later in the article.
\subsection{Local well-posedness}
We prove the local well-posedness of the solution. Let
\begin{equation*}
  G(t,x)=\frac1{(4\pi t)^{d/2}}
  \exp\left(-\frac{|x|^2}{4t}\right)
\end{equation*}
be the heat kernel.

\begin{proposition}[Local bounded solutions]\label{prop:local}
There is a maximal time $T_*\in(0,\infty]$ and a unique solution
\begin{equation*}
  u\in C([0,T_*);L^1(\R^d))
  \cap L^\infty_{\mathrm{loc}}
  ([0,T_*)\times\R^d)
\end{equation*}
satisfying
\begin{equation}\label{eq:duhamel}
  u(t)=G(t)*u_0-
  \int_0^t\nabla G(t-s)*\bigl(b(s)u(s)^{1+k}\bigr)\,d s.
\end{equation}
The solution conserves the mass.  
\end{proposition}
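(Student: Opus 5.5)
We argue by a Banach fixed point on the Duhamel formulation \eqref{eq:duhamel}, working in
\begin{equation*}
  X_T=\Bigl\{u\in C([0,T];L^1(\R^d))\cap L^\infty((0,T)\times\R^d):\ \sup_{t\le T}\norm{u(t)}_1\le 2\norm{u_0}_1,\ \sup_{t\le T}\norm{u(t)}_\infty\le 2\norm{u_0}_\infty\Bigr\}
\end{equation*}
with the metric $\sup_t(\norm{u(t)-v(t)}_1+\norm{u(t)-v(t)}_\infty)$.

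\textbf{Localizing the drift.} Since $b$ is only in $L^\infty_{loc}$, we first treat $b$ bounded on $(0,T_0)\times\R^d$, with $B:=\norm{b}_{L^\infty((0,T_0)\times\R^d)}$. The general case follows by a localization, or by reading the hypothesis as local-in-time boundedness on $\R^d$. This is the main technical caveat: if $b$ is genuinely unbounded at spatial infinity, the contraction must be run in weighted spaces or with cut-off drifts $b\chi_R$ together with a finite-speed or uniqueness argument. We would first try cut-offs and show the limit is independent of $R$ using the uniform $L^\infty$ bound.

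\textbf{Self-map and contraction.} The key estimates are $\norm{G(t)*f}_q\le\norm f_q$ and $\norm{\nabla G(t)*f}_q\le Ct^{-1/2}\norm f_q$ for $q=1,\infty$. Write $\Phi(u)$ for the right-hand side of \eqref{eq:duhamel}. Then
\begin{equation*}
  \norm{\Phi(u)(t)}_q\le\norm{u_0}_q+C B\int_0^t(t-s)^{-1/2}\norm{u}_\infty^{k}\norm{u}_q\,ds\le\norm{u_0}_q+CBT^{1/2}(2\norm{u_0}_\infty)^{k}\,2\norm{u_0}_q .
\end{equation*}
Also $|u^{1+k}-v^{1+k}|\le(1+k)\max(\norm u_\infty,\norm v_\infty)^k|u-v|$, applied to $u_+,v_+$, or to $|u|^{k}u$ before positivity is known. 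The same bound therefore gives a Lipschitz constant $\le CBT^{1/2}\norm{u_0}_\infty^k$. Choosing $T$ small, depending only on $B$, $k$ and $\norm{u_0}_\infty$, makes $\Phi$ a contraction on $X_T$. Time continuity in $L^1$ follows from strong continuity of the heat semigroup and integrability of $(t-s)^{-1/2}$. Uniqueness in $L^\infty_{loc}$ follows from the same Gronwall-type estimate.

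\textbf{Maximal time.} The existence time depends only on $\norm{u(t_0)}_\infty$, so we can restart the construction from $u(t_0)$. Iterating defines $T_*$ as the supremum of continuation times. This yields the blow-up alternative $\limsup_{t\to T_*}\norm{u(t)}_\infty=\infty$ if $T_*<\infty$.

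\textbf{Mass conservation.} Integrate \eqref{eq:duhamel} in $x$. We have $\int G(t)*u_0=\int u_0$, and $\int\nabla G(t-s)*F=0$ for $F=b u^{1+k}\in L^1$, since $\int\nabla G=0$ and Fubini applies. Hence $\int u(t)=\int u_0$. Combined with nonnegativity this gives $\norm{u(t)}_1=\norm{u_0}_1$. Nonnegativity is proved separately: test the equation with $u_-$ after parabolic regularization, or compare with $0$ using uniqueness of the truncated problem with $u_+^{1+k}$.
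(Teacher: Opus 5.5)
Your proposal is correct and follows essentially the same route as the paper: a Banach fixed point for the Duhamel map in $L^1\cap L^\infty$ using $\|\nabla G(t)\|_1\le Ct^{-1/2}$, mass conservation from $\int\nabla G=0$ plus Fubini, and nonnegativity handled separately via the truncated nonlinearity $(u_+)^{1+k}$ and testing with $u_-$. Like the paper, which uses $\|b\|_\infty$ throughout, you effectively treat $b$ as bounded on $(0,T_0)\times\R^d$, and your continuation argument for $T_*$ and your uniqueness among all locally bounded solutions fill in steps the paper leaves implicit (though the ``finite-speed'' idea in your cut-off aside would not apply to a parabolic equation).
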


\begin{proof}
The proof holds by showing that the Duhamel's formula \eqref{eq:duhamel} describe the fixed point of the contraction operator
$$\Phi[u](t)=G(t)*u_0-
  \int_0^t\nabla G(t-s)*\bigl(b(s)u(s)^{1+k}\bigr)\,d s
$$ in norm $L^\infty\cap L^1$ on the set 
$$
S = \left\{ u \in {C([0,T], L^1(\R^d))\cap L^\infty([0,T],L^\infty(\R^d))}: \|u\|_\infty \le r,\, u(0) = u_0 \right\}.
$$
First, $\Phi[u](t)$ is a continuous function, indeed
\begin{equation*}
		\begin{split}
			\|\Phi[u](t+\delta) - \Phi[u](t)\|_\infty &\le \|G(t+\delta) - G(t)\|_1 \|u_0\|_\infty  \\
     &\qquad  +\int_t^{t+\delta} \|{ G_x(t+\delta -s)}\|_{1} \|b(s) u^{1+k}(s)\|_{\infty} ds \\
			& \qquad + \int_0^t \|{ G_x(t + \delta - s) - G_x(t-s)}\|_{1} \|b(s) u^{1+k}(s)\|_{\infty} ds \\
		&\leq \|G(t+\delta) - G(t)\|_1 \|u_0\|_\infty \\
    &\qquad + C \|b\|_{\infty} \|u\|_\infty^{k+1} \bigg( \delta^{\frac{1}{2}} + \int_0^t \|{  G_x(t + \delta - s) - G_x(t-s)}\|_{1} ds \bigg).
		\end{split}
	\end{equation*}
The last terms converge to $0$ as $\delta \to 0$ uniformly in every interval of the form $[t_0,T]$, $t_0>0$.
The uniform continuity in norm $L^1$ norm on [0,T], follows analogously using 
    \begin{equation*}
\begin{split}
			\|\Phi[u](t+\delta) - \Phi[u](t)\|_1 &\leq \|G(t+\delta) \ast u_0 - G(t)\ast u_0\|_1 \\
      &\qquad  +\int_t^{t+\delta} \|{ G_x(t+\delta -s)}\|_{1} \|b(s) u^{1+k}(s)\|_{1} ds \\
			& \qquad + \int_0^t \|{ G_x(t + \delta - s) - G_x(t-s)}\|_{1} \|b(s) u^{1+k}(s)\|_{1} ds \\
			&\leq \|G(t+\delta) - G(t)\|_1 \|u_0\|_\infty \\
    &\qquad + C \|b\|_{\infty} \|u\|_\infty^{k}\|u\|_1 \bigg( \delta^{\frac{1}{2}} + \int_0^t \|{  G_x(t + \delta - s) - G_x(t-s)}\|_{1} ds \bigg).
\end{split}
	\end{equation*}
Similarly, it is easy to prove that $\Phi[u](t)$ is a contraction in norm $L^1$ and in norm $L^\infty$, using 
\begin{equation*}
  \norm{\nabla G(t-s)*\bigl(bu^{1+k}\bigr)}_\infty
  \leq C(t-s)^{-1/2}\norm{b}_\infty  \|u\|_\infty^{1+k},
\end{equation*}
and
\begin{equation*}
  \norm{\nabla G(t-s)*\bigl(bu^{1+k}\bigr)}_1
  \leq C(t-s)^{-1/2}\norm{b}_\infty  \|u\|_\infty^k\norm{u}_1.
\end{equation*}
For $T\ll1$ it holds $\Phi(S)\subseteq S$. 
Finally for Duhamel formula, it is easy to prove that the solution preserve the mass, by using 
\begin{equation*}
    \int_{\mathbb R^d}
    \nabla G(t-s)\star\big(b_i(s)u(s)^{1+k}\big)\,dx
    =\int_{\mathbb R^d}b_i(s,y)u(s,y)^{1+k}\int_{\mathbb R^d}\nabla G(t-s,x-y)\,dx\,dy \\
    =0,
\end{equation*}
This concludes the thesis. 
\end{proof}

\begin{remark}
Alternatively, without assuming $b$ bounded, one could impose $E(u_0)$ and $\|D b\|_{p,\infty}$ bounded and use the formula $$
|b(t,x)-b(t,y)|\le C\|D b(t)\|_{p,\infty}|x-y|^\alpha,$$
and follows the same estimates of \cite{bianchini2024existence}.
\end{remark}
\begin{corollary}\label{prop:aprioriest}
There is a time $\tau$ and $C$, both depending on $\norm{b}_\infty,\norm{u_0}_1,\norm{u_0}_\infty$, such that any solution of Proposition \ref{prop:local} satisfies
\begin{equation*}
    \sup_{t \in [0,\tau]}\norm{u}_\infty\le C
\end{equation*}
\end{corollary}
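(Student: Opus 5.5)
The corollary should come straight out of the fixed-point construction in Proposition \ref{prop:local}. The key observation is that the existence time $T$ and the radius $r$ of the invariant set $S$ depend only on $\norm{b}_\infty$, $\norm{u_0}_1$ and $\norm{u_0}_\infty$. Combined with uniqueness, this forces every solution to stay in $S$ on $[0,T]$. I would take $\tau:=T$ and $C:=r$.

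\textbf{Step 1: choose the radius.} Fix $r:=2\norm{u_0}_\infty$. For $u\in S$ on $[0,\tau]$, use $\norm{G(t)*u_0}_\infty\le\norm{u_0}_\infty$ and the kernel bound $\norm{\nabla G(t-s)}_1\le C(t-s)^{-1/2}$ to get
\begin{equation*}
\norm{\Phi[u](t)}_\infty\le \norm{u_0}_\infty + C\norm{b}_\infty r^{1+k}\,\tau^{1/2}.
\end{equation*}
This is at most $r$ as soon as $\tau^{1/2}\le \norm{u_0}_\infty/(C\norm{b}_\infty r^{1+k})$. The same computation in $L^1$ uses $\norm{bu^{1+k}}_1\le\norm{b}_\infty r^k\norm{u}_1$. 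Together these give $\Phi(S)\subseteq S$ for such $\tau$.

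\textbf{Step 2: contraction.} The difference estimate uses $|u^{1+k}-v^{1+k}|\le(1+k)r^k|u-v|$. This gives a Lipschitz constant of order $C\norm{b}_\infty r^k\tau^{1/2}$ in both norms. Shrinking $\tau$ makes it less than $1/2$, and $\tau$ still depends only on $\norm{b}_\infty$, $\norm{u_0}_\infty$ and $k,d$. Through the $L^1$ part, $\norm{u_0}_1$ enters only to place $u_0$ in the space.

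\textbf{Step 3: every solution is this one.} Let $u$ be any solution as in Proposition \ref{prop:local}. It satisfies \eqref{eq:duhamel}, and it is uniquely determined there, so it coincides with the fixed point on $[0,\min(\tau,T_*))$. In particular $T_*>\tau$: otherwise the fixed point would extend past $T_*$, contradicting maximality. Hence $\sup_{[0,\tau]}\norm{u(t)}_\infty\le r=2\norm{u_0}_\infty$.

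The main obstacle is Step 3. The uniqueness in Proposition \ref{prop:local} is only uniqueness within $S$ for a given radius. So I must check that a solution which is merely $L^\infty_{\mathrm{loc}}$ cannot leave the ball of radius $r$ before time $\tau$. I would handle this with a continuity argument. $u$ is locally bounded and $t\mapsto\norm{u(t)}_\infty$ is controlled by Duhamel. Let $t_1:=\sup\{t:\norm{u}_{L^\infty(0,t)}\le r\}$. If $t_1<\tau$, then on $[0,t_1]$ the Step 1 estimate gives the strict bound $\norm{u}_\infty\le \tfrac{3}{2}\norm{u_0}_\infty<r$. By the time continuity shown in the proof of Proposition \ref{prop:local}, this strict bound persists slightly beyond $t_1$, which is a contradiction.
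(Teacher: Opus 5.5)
Your proposal is correct and follows the paper's own argument. You choose $\tau$ so that $\Phi$ maps $S$ into itself and contracts, then read the bound off the Duhamel estimate $\norm{u(t)}_\infty\le\norm{u_0}_\infty+C\norm{b}_\infty r^{1+k}\sqrt{t}$, and your Step 3 bootstrap usefully makes explicit the uniqueness outside $S$ that the paper leaves implicit (minor slip: with your Step 1 choice of $\tau$ the bound at $t_1<\tau$ is strictly below $r$ but not necessarily $\tfrac32\norm{u_0}_\infty$, which is all you need, or halve the admissible $\tau$).
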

\begin{proof}
The time $\tau>0$ can be selected by imposing that the operator $\Phi$ in Proposition \ref{prop:local} is a contraction, and in this case, for similar computation of before, it holds
\begin{equation*}
  \norm{u(t)}_\infty\le \|u_0\|_\infty+C\norm{b}_\infty\norm{u_0}_\infty^{k+1}\sqrt t
\end{equation*}  
for every $t\leq \tau$.
\end{proof}
\subsection{Energy regularity estimate}
We prove a standard energy regularity.
\begin{proposition}\label{prop:energy_reg}
  It holds
\begin{equation*}
    u\in L^\infty((0,T),L^2(\mathbb R^d))
    \cap L^2((0,T),H^1(\mathbb R^d)),
    \qquad
    \partial_tu\in L^2((0,T),H^{-1}(\mathbb R^d)).
\end{equation*}
\end{proposition}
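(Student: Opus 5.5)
Since $T<T_*$, the solution of Proposition \ref{prop:local} is bounded on $[0,T]\times\R^d$: set $M:=\sup_{t\le T}\norm{u(t)}_\infty<\infty$. Mass conservation gives $\norm{u(t)}_1=\norm{u_0}_1$. By interpolation,
\[
\norm{u(t)}_2^2\le M\norm{u_0}_1 ,
\]
which is the $L^\infty((0,T),L^2)$ bound. The gradient bound comes from the standard energy identity. Multiplying \eqref{eq:pde} by $u$ and integrating formally, we get
\[
\frac12\frac{d}{dt}\norm{u}_2^2+\norm{\nabla u}_2^2=\int b\,u^{1+k}\cdot\nabla u\,dx .
\]
We bound the right-hand side by Young's inequality:
\[
\int b\,u^{1+k}\cdot\nabla u\,dx\le \tfrac12\norm{\nabla u}_2^2+\tfrac12\norm{b}_{L^\infty((0,T)\times\R^d)}^2 M^{2k}\norm{u}_2^2 .
\]
Here only the local boundedness of $b$ is used; if $b\in L^\infty_{loc}$ only, we would first work on compact sets or assume $b$ bounded on the strip, as in Proposition \ref{prop:local}. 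Integrating in time then yields
\[
\sup_{t\le T}\norm{u(t)}_2^2+\int_0^T\norm{\nabla u}_2^2\le \norm{u_0}_2^2+C T\, M^{2k+1}\norm{u_0}_1 .
\]
Since $u_0\in L^1\cap L^\infty\subset L^2$, this gives $u\in L^2((0,T),H^1)$.

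For the time derivative we read \eqref{eq:pde} in the distributional sense:
\[
\partial_t u=\dive(\nabla u-b u^{1+k}).
\]
Because $\nabla u\in L^2_tL^2_x$ and $\norm{b u^{1+k}}_2\le \norm{b}_\infty M^{k}\norm{u}_2$, the flux lies in $L^2((0,T),L^2)$. Hence its divergence lies in $L^2((0,T),H^{-1})$, with the norm controlled by the quantities above.

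The main obstacle is justifying the formal energy identity, because a priori $u$ is only a mild solution given by \eqref{eq:duhamel}, and $\nabla u$ is not known to be in $L^2$. I would handle this by regularization. First mollify $u_0$ and $b$ (say $b_\varepsilon=b*\rho_\varepsilon$, truncated so it stays bounded) and solve the problem. For smooth data the Duhamel solution is classical by parabolic bootstrapping, since $\nabla G$ gains half a derivative per iteration and the problem stays in $L^1\cap L^\infty$. For these regularized solutions the identity holds, with integration by parts justified by the decay given by $L^1\cap L^\infty$ integrability of $u$ and $\nabla u$. The estimates are uniform in $\varepsilon$ on a common interval, by Corollary \ref{prop:aprioriest} and the continuation bound $M$. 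The contraction estimates of Proposition \ref{prop:local} show $u_\varepsilon\to u$ in $C([0,T];L^1)$, and weak lower semicontinuity transfers the $L^2_tH^1_x$ and $L^2_tH^{-1}_x$ bounds to $u$. Alternatively, one can obtain $\nabla u\in L^2_tL^2_x$ directly from \eqref{eq:duhamel} by maximal regularity of the heat semigroup applied to $\dive$ of an $L^2_tL^2_x$ forcing, which avoids the approximation step.
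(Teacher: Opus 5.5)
Your proposal is correct and follows the same route as the paper. You test the equation with $u$, absorb $\frac12\norm{\nabla u}_2^2$ via Young's inequality, integrate in time, and read off $\partial_t u\in L^2((0,T),H^{-1}(\R^d))$ by duality from the flux $\nabla u-bu^{1+k}\in L^2((0,T),L^2(\R^d))$.

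There are two differences. First, you get the $L^\infty_tL^2_x$ bound directly from $\norm{u}_2^2\le\norm{u}_\infty\norm{u}_1$ rather than by Gronwall. Second, you actually justify the testing step, either by regularization or, more cleanly, by $L^2$ maximal regularity of the heat semigroup in the Duhamel formula. The paper asserts that step from $u\in L^2$ alone, although it really needs the $H^1$ regularity that is being proved.
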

\begin{proof}
It is easy to prove that Duhamel formula \eqref{eq:duhamel} implies \eqref{eq:pde} as distribution. Since $u\in L^2$, we can use it as test function to obtain
\begin{equation}\label{eq:energy_est}
\begin{split}
    \frac d{dt}\frac 1 2 \|u(t)\|_2^2
    +\|\nabla u(t)\|_2^2
    &=
    \int_{\mathbb R^d}
    b(t,x)u(t,x)^{1+k}\cdot\nabla u(t,x)\,dx\\
    &\le \frac12\|\nabla u(t)\|_2^2
    +
    \frac12 \|b\|_{L^\infty_{t,x}}^2\|u\|_{L^\infty_{t,x}}^{2k}\|u(t)\|_2^2.
\end{split}
\end{equation}
By Gronwall's inequality and \eqref{eq:energy_est}, it holds
\begin{equation*}
    \|u(t)\|_2^2
    \leq
    \|u_0\|_2^2
    \exp\bigl(\|b\|_{L^\infty_{t,x}}^2\|u\|_{L^\infty_{t,x}}^{2k}t\bigr),
    \qquad 0\leq t\leq T.
\end{equation*}
Integrating \eqref{eq:energy_est} in time also gives
\begin{equation*}
    \int_0^T\|\nabla u(t)\|_2^2\,dt
    \leq
    \|u_0\|_2^2
    +
    \|b\|_{L^\infty_{t,x}}^2\|u\|_{L^\infty_{t,x}}^{2k}
    \int_0^T\|u(t)\|_2^2\,dt
    <\infty.
\end{equation*}
Consequently,
\begin{equation*}
    u\in
    L^\infty((0,T),L^2(\mathbb R^d))
    \cap
    L^2((0,T),H^1(\mathbb R^d)).
\end{equation*}

Finally, for every $\varphi\in H^1(\mathbb R^d)$ test function,
\begin{equation*}
\begin{aligned}
    |\langle\partial_tu,\varphi\rangle|
    &\leq
    \|\nabla u\|_2\|\nabla\varphi\|_2
    +
    \|bu^{1+k}\|_2\|\nabla\varphi\|_2
    \\
    &\leq
    \bigl(
    \|\nabla u\|_2
    +
    \|b\|_{L^\infty_{t,x}}^2\|u\|_{L^\infty_{t,x}}^{2k}\|u\|_2
    \bigr)
    \|\varphi\|_{H^1}.
\end{aligned}
\end{equation*}
Thus
\begin{equation*}
    \|\partial_tu(t)\|_{H^{-1}}
    \leq
    \|\nabla u(t)\|_2
    +
   \|b\|_{L^\infty_{t,x}}^2\|u\|_{L^\infty_{t,x}}^{2k}\|u(t)\|_2,
\end{equation*}
and therefore
\begin{equation*}
    \partial_tu
    \in L^2((0,T),H^{-1}(\mathbb R^d)).
\end{equation*}
\end{proof}
\subsection{Positiveness}
Now we prove that the solution is always non-negative.
\begin{proposition}
  The solution defined in Proposition \ref{prop:local} is non-negative
\end{proposition}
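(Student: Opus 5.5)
We test the equation against $u_-$ and run an energy (Stampacchia truncation) argument. The regularity from Proposition \ref{prop:energy_reg} makes this legitimate. Write the nonlinearity as $|u|^{k}u$ (or any extension of $u\mapsto u^{1+k}$ to negative values that vanishes at $0$ and is locally Lipschitz). The Duhamel solution of Proposition \ref{prop:local} is then a weak solution in the class
\begin{equation*}
  u\in L^2((0,T),H^1),\qquad \partial_t u\in L^2((0,T),H^{-1}),
\end{equation*}
on every $[0,T]\subset[0,T_*)$. Since $u\in L^2H^1$, we also have $u_-\in L^2((0,T),H^1)$ with $\nabla u_-=-\nabla u\,1_{\{u<0\}}$. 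The Lions--Magenes chain rule for $\phi(u)=\tfrac12 u_-^2$ (convex, $C^1$ with Lipschitz derivative) gives
\begin{equation*}
  \langle\partial_t u,-u_-\rangle=\frac{d}{dt}\frac12\|u_-(t)\|_2^2
\end{equation*}
in the sense of absolutely continuous functions.

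Testing the equation with $-u_-$ and using Lemma \ref{lemma:stamp} to discard the set $\{u=0\}$, we obtain
\begin{equation*}
  \frac{d}{dt}\frac12\|u_-\|_2^2+\|\nabla u_-\|_2^2
  =-\int b\,|u|^{k}u\cdot\nabla u_-\,dx
  =\int_{\{u<0\}} b\,|u|^{k}u_-\cdot\nabla u_-\,dx .
\end{equation*}
We bound the right-hand side by
\begin{equation*}
  \|b\|_{L^\infty_{t,x}}\|u\|_{L^\infty_{t,x}}^{k}\|u_-\|_2\|\nabla u_-\|_2
  \le \tfrac12\|\nabla u_-\|_2^2+\tfrac12\|b\|_{L^\infty}^2\|u\|_{L^\infty}^{2k}\|u_-\|_2^2,
\end{equation*}
which is legitimate because $u$ is bounded on $[0,T]\times\R^d$ by Proposition \ref{prop:local} and $b$ is bounded there. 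Gronwall's inequality then yields
\begin{equation*}
  \|u_-(t)\|_2^2\le \|(u_0)_-\|_2^2\,e^{Ct}=0,
\end{equation*}
since $u_0\ge0$. Hence $u\ge0$ a.e. on $[0,T]$ for each $T<T_*$. It follows that $|u|^ku=u^{1+k}$, so the solution built with the extended nonlinearity coincides with the one of Proposition \ref{prop:local}.

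The main obstacle is justifying the test with $u_-$ and the time chain rule with only the regularity of Proposition \ref{prop:energy_reg}. Two points need care. First, the initial trace: $u\in C([0,T];L^1)\cap L^\infty$, together with $u\in W(0,T)\hookrightarrow C([0,T];L^2)$, gives $u_-(t)\to (u_0)_-=0$ in $L^2$. Second, $b$ is only locally bounded. If only $L^\infty_{loc}$ in $x$ is available, I would first localise with a cutoff $\chi_R$, testing with $-u_-\chi_R^2$. The extra terms $\int u_-^2|\nabla\chi_R|^2$ and $\int|b||u|^k u_-^2\chi_R|\nabla\chi_R|$ vanish as $R\to\infty$ because $u\in L^\infty L^2$. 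This is acceptable since the contraction argument of Proposition \ref{prop:local} used $\|b\|_\infty$ globally anyway.
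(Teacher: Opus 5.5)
Your argument is correct and follows the same structure as the paper: test with $u_-$, derive an energy identity, and use $(u_0)_-=0$. The difference is in how the drift term is handled. The paper extends the nonlinearity by $F(r)=(r_+)^{1+k}$ rather than $|r|^kr$. Since $F(u)=0$ on $\{u\le0\}$ and $\nabla u_-=0$ a.e.\ on $\{u\ge0\}$, the drift contribution $\int bF(u)\cdot\nabla u_-\,dx$ vanishes identically. This gives $\frac12\frac{d}{dt}\|u_-\|_2^2+\|\nabla u_-\|_2^2=0$, so $\|u_-\|_2$ decreases monotonically with no Gronwall step and no quantitative use of $b$ or $\|u\|_\infty$ in the sign argument. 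With your odd extension the drift term survives on $\{u<0\}$, and you absorb it via $\|b\|_\infty\|u\|_\infty^k$, Young and Gronwall. This costs you the $L^\infty$ bounds on $[0,T]$, but it works for any locally Lipschitz extension vanishing at $0$. Your justification of the time chain rule and of the initial trace is also more careful than the paper's.

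One caveat concerns your localisation remark. With $|u|^ku$ it does not close if $b$ is genuinely unbounded at infinity. The Gronwall constant grows like $\sup_{B_{2R}}|b|^2$, and $\int|b||u|^ku_-^2\chi_R|\nabla\chi_R|\,dx$ need not tend to zero. With the paper's $F$, both drift terms $F(u)\nabla u_-\chi_R^2$ and $F(u)u_-\chi_R\nabla\chi_R$ vanish identically. Only the diffusion cross term remains, of size $O(R^{-2}\sup_t\|u_-(t)\|_2^2)$, so that is the extension to use if you want this generality. Your main argument relies on the global bound on $b$ already used in Proposition \ref{prop:local}, so this does not affect its validity.
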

\begin{proof}Define the function
$r \mapsto F(r):=(r_+)^{1+k}$. Since $F$ is locally Lipschitz on $\mathbb R$, the previous contraction
argument applies to
\begin{equation*}
    \partial_t u+\dive\bigl(bF(u)\bigr)=\Delta u.
\end{equation*}
We now prove that the corresponding solution is non-negative, so that
$F(u)=u^{1+k}$ and hence it solves the original equation. Testing the equation with $u_-$ gives
\begin{equation*}
    -\frac12\frac d{dt}\|u_-(t)\|_2^2
    +\int_{\mathbb R^d}
    \dive\bigl(bF(u)\bigr)u_-\,dx
    =
    \int_{\mathbb R^d}\Delta u\,u_-\,dx.
\end{equation*}
Since $F(u)=0$ on $\{u\leq0\}$ and
$\nabla u_-=0$ a.e. on $\{u\geq0\}$,
\begin{equation*}
    \int_{\mathbb R^d}
    \dive\bigl(bF(u)\bigr)u_-\,dx
    =
    -\int_{\mathbb R^d}
    bF(u)\cdot\nabla u_-\,dx
    =0.
\end{equation*}
Moreover,
\begin{equation*}
    \int_{\mathbb R^d}\Delta u\,u_-\,dx
    =
    -\int_{\mathbb R^d}\nabla u\cdot\nabla u_-\,dx
    =
    \|\nabla u_-\|_2^2.
\end{equation*}
Hence
\begin{equation*}
    \frac12\frac d{dt}\|u_-(t)\|_2^2
    +\|\nabla u_-(t)\|_2^2
    =0.
\end{equation*}
This proves that $u=u_+\ge0$.
\end{proof}

\subsection{Reduction to smooth solutions}\label{subsec:smoothreduction} We want to prove that, throughout a standard derivation of a priori estimates and smooth approximations, for the rest of the paper, we may consider the solution to be smooth. We can consider a smooth approximation $u_{0,\varepsilon}$ such that
\begin{equation*}
    u_{0,\varepsilon}\in C^\infty(\mathbb R^d),
\qquad
u_{0,\varepsilon}>0,
\qquad
u_{0,\varepsilon}\longrightarrow u_0
\quad\text{in }L^1(\mathbb R^d),
\end{equation*}
and
\begin{equation*}
\|u_{0,\varepsilon}\|_1=\|u_0\|_1,
\qquad
\|u_{0,\varepsilon}\|_\infty
\leq\|u_0\|_\infty.
\end{equation*}
As well, we can consider a smooth approximation $ b_\varepsilon$ such that
\begin{equation*}
\|b_\varepsilon\|_\infty\leq\|b\|_\infty, \qquad (\dive b_\varepsilon)_-\leq\rho\star(\dive b)_-,
\end{equation*}
where $\rho$ is a smooth mollifier, and by Theorem \ref{Theo:young_lorentz}
\begin{equation*}
\|(\dive b_\varepsilon)_-\|_
{L^\infty_tL^{p,\infty}_x}
\leq
C_p
\|(\dive b)_-\|_
{L^\infty_tL^{p,\infty}_x},
\end{equation*}
where $C_p$ is independent of $\varepsilon$.
Let $u_\varepsilon$ be the maximal solution of
\begin{equation*}
\begin{cases}
\partial_t u_\varepsilon
+\dive\!\left(b_\varepsilon u_\varepsilon^{1+k}\right)
=\Delta u_\varepsilon,
\\
u_\varepsilon(0)=u_{0,\varepsilon}.
\end{cases}
\end{equation*}
Then in any time interval $[0,T]$ where the solution is defined,
it solves the equation
\begin{equation*}
    \partial_t u_\varepsilon-\Delta u_\varepsilon= - \dive F_\varepsilon,
\end{equation*}
where 
\begin{equation*}
F_\varepsilon := b_\varepsilon u_\varepsilon^{1+k} \in L^\infty([0,T]\times\mathbb R^d).
\end{equation*}

The Hölder estimates for heat potentials
\cite[Chapter IV, Sections 1-2]{ladyzhenskai1968linear}
then imply, for every $\alpha\in(0,1)$,
\begin{equation*}
    u_\varepsilon
\in
C^{\alpha/2,\alpha}_{\mathrm{loc}}
\bigl((0,T_\varepsilon)\times\mathbb R^d\bigr).
\end{equation*}

Since $r\mapsto r^{1+k}$ is locally Lipschitz on $[0,\infty)$ 
\begin{equation*}
F_\varepsilon=b_\varepsilon u_\varepsilon^{1+k}
\in
C^{\alpha/2,\,\alpha}_{\mathrm{loc}}.
\end{equation*}
Applying again the Hölder estimates in
\cite[Chapter IV, Section 2]{ladyzhenskai1968linear} it holds
\begin{equation*}
u_\varepsilon
\in
C^{(\alpha+1)/2,\,\alpha+1}_{\mathrm{loc}}
\bigl((0,T_\varepsilon)\times\mathbb R^d\bigr).
\end{equation*}
and thus
\begin{equation*}
u_\varepsilon^k
\in
C^{\gamma/2,\,\gamma}_{\mathrm{loc}}
\bigl((0,T_\varepsilon)\times\mathbb R^d\bigr), \qquad \gamma=\min\{\alpha,k\}
\end{equation*}
Then the solution solves
\begin{equation*}
\partial_tu_\varepsilon-\Delta u_\varepsilon
= f_\epsilon
\end{equation*}

where 
\begin{equation*}
 f_\epsilon=-(1+k)b_\varepsilon u_\varepsilon^k
\cdot\nabla u_\varepsilon
-(\dive b_\varepsilon)u_\varepsilon^{1+k}\in C^{\gamma/2,\gamma}_{\mathrm{loc}}.
\end{equation*}

The Schauder estimate for the heat equation
\cite[Chapter IV, Section 2]{ladyzhenskai1968linear} therefore gives
\begin{equation}\label{eq:ueps-classical}
u_\varepsilon
\in
C^{1+\gamma/2,\,2+\gamma}_{\mathrm{loc}}
\bigl((0,T_\varepsilon)\times\mathbb R^d\bigr).
\end{equation}
In particular, $u_\varepsilon$ is a classical solution.

We can now apply the strong maximum principle. Indeed the solution solves
\begin{equation*}
\partial_tu_\varepsilon-\Delta u_\varepsilon
+a_\varepsilon\cdot\nabla u_\varepsilon
+c_\varepsilon u_\varepsilon=0,
\end{equation*}
where
\begin{equation*}
a_\varepsilon
=(1+k)b_\varepsilon u_\varepsilon^k,
\qquad
c_\varepsilon
=(\dive b_\varepsilon)u_\varepsilon^k.
\end{equation*}
These coefficients are continuous and locally bounded. After an
exponential change of the dependent variable to make the zeroth-order
coefficient non-negative, the parabolic strong maximum principle
\cite[Section 7.1.4, Theorem 12]{evans2022partial} applies.
Since $u_{0,\varepsilon}>0$, it follows that
\begin{equation}\label{eq:ueps-positive}
u_\varepsilon(t,x)>0,
\qquad
t>0,\quad x\in\mathbb R^d.
\end{equation}

Finally, $r\mapsto r^{1+k}$ is smooth far from $0$. Repeated application of the interior
Schauder estimates for the heat equation
\cite[Chapter IV, Section 2]{ladyzhenskai1968linear} then gives
\begin{equation}\label{eq:ueps-smooth}
u_\varepsilon
\in
C^\infty_{\mathrm{loc}}
\bigl((0,T_\varepsilon)\times\mathbb R^d\bigr).
\end{equation}

Using Duhamel formulas as in the contraction argument, it follows the convergence 
 \begin{equation*} 
  u_{\varepsilon}\longrightarrow u\qquad\text{in }L^\infty((0,\tau), L^\infty(\mathbb R^d)), 
\end{equation*} 
where $\tau$ is independent on $\varepsilon$. Indeed
  \begin{equation*}
  \begin{aligned}
    u_\varepsilon(t)-u(t) &= G(t)\star \bigl(u_{0,\varepsilon}-u_0\bigr) \\ 
    &\qquad - \int_0^t \nabla G(t-r)\star \left[ b_\varepsilon(r)u_\varepsilon(r)^{1+k} - b(r)u(r)^{1+k} \right]dr.
  \end{aligned}
\end{equation*}

The thesis follows from similar computations as done before, observing that one can decompose 
\begin{equation*} 
\begin{aligned} 
  b_\varepsilon u_\varepsilon^{1+k} - bu^{1+k} &= b_\varepsilon \left( u_\varepsilon^{1+k}-u^{1+k} \right) + (b_\varepsilon-b)u^{1+k}.
\end{aligned} 
\end{equation*} 
and by
\begin{equation*}
  0\leq u_\varepsilon,u\leq C,
\end{equation*}
this $C$ only depends on $\norm{b}_\infty, \norm{u_0}_1, \norm{u_0}_\infty$. Then 
\begin{equation*}
   \left| u_\varepsilon^{1+k}-u^{1+k} \right| \leq (1+k) C^k|u_\varepsilon-u|. 
\end{equation*}

The same subdivision argument used in the contraction proof now gives 
\begin{equation*} u_\varepsilon\longrightarrow u \qquad\text{in } C([0,T];L^1(\mathbb R^d)). 
\end{equation*} 
We also observe that the function $m_{u_\varepsilon(t)}$ is converging to $m_{u(t)}$ uniformly on all the times when $u_\varepsilon,u$ are well defined.
Indeed, in general, for every non-negative $f,g\in L^1(\mathbb R^d)$, 
\begin{equation*} 
      m_f(s)-m_g(s) \leq \sup_{|A|=s} \int_A(f-g)\,dx \leq \|f-g\|_1. 
\end{equation*} 
We can summarise the reduction argument as follows.

\begin{proposition}
  Every solution $u$ of Proposition $\ref{prop:local}$ can be approximated with a sequence of function $u_\varepsilon$ such that 
  \begin{equation*}
    u_\varepsilon \longrightarrow\ u \text{ in norms } C([0,T^*),L^1(\R^d)) \text{ and } L^\infty((0,T^*),L^\infty(\R^d))
  \end{equation*}
  Moreover every estimate on $m_\varepsilon\le C$ pass to the limit as $m_\varepsilon\le C$ with the same constant.
\end{proposition}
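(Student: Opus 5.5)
The plan is to build $u_\varepsilon$ exactly as in the discussion above: smooth, strictly positive data $u_{0,\varepsilon}$ that conserve mass with $\|u_{0,\varepsilon}\|_\infty\le\|u_0\|_\infty$, and smooth fields $b_\varepsilon$ with $\|b_\varepsilon\|_\infty\le\|b\|_\infty$. These are chosen so that the quantities on which Corollary \ref{prop:aprioriest} depends are bounded uniformly in $\varepsilon$. The corollary then gives a common time $\tau>0$ and a constant $C$, both independent of $\varepsilon$, with $0\le u_\varepsilon,u\le C$ on $[0,\tau]$. By the Schauder bootstrap and the strong maximum principle recalled above, each $u_\varepsilon$ is smooth and positive on its existence interval.

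On $[0,\tau]$ I would subtract the two Duhamel formulas \eqref{eq:duhamel} and split the nonlinear term as $b_\varepsilon(u_\varepsilon^{1+k}-u^{1+k})+(b_\varepsilon-b)u^{1+k}$. Using $\|\nabla G(t)\|_1\le Ct^{-1/2}$ together with $|u_\varepsilon^{1+k}-u^{1+k}|\le(1+k)C^k|u_\varepsilon-u|$, I expect, for $X=L^1$ or $X=L^\infty$,
\begin{equation*}
\sup_{[0,t]}\|u_\varepsilon-u\|_X\le \|u_{0,\varepsilon}-u_0\|_X+C\sqrt t\,\sup_{[0,t]}\|u_\varepsilon-u\|_X+\int_0^t (t-r)^{-1/2}\|(b_\varepsilon-b)u^{1+k}(r)\|_X\,dr .
\end{equation*}
Shrinking $\tau$ (uniformly in $\varepsilon$) so that $C\sqrt\tau<1/2$, the middle term can be absorbed into the left-hand side. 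The last term tends to zero in $L^1$ by dominated convergence, since $b_\varepsilon\to b$ a.e. and stays uniformly bounded while $u^{1+k}\in L^1$. The initial term tends to zero in $L^1$ by construction.

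To cover all of $[0,T]$ with $T<T^*$, I would iterate this subdivision argument on $[j\tau,(j+1)\tau]$. The a priori bound $\sup_{[0,T]}\|u\|_\infty<\infty$ for the limit, combined with convergence on the previous step, keeps $\|u_\varepsilon\|_\infty\le 2\sup_{[0,T]}\|u\|_\infty$ for small $\varepsilon$. So the step length can be chosen uniformly and the $u_\varepsilon$ stay defined up to $T$, which gives convergence in $C([0,T];L^1)$.

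The main obstacle is the $L^\infty$ convergence. The term $\|u_{0,\varepsilon}-u_0\|_\infty$ does not vanish for a merely bounded $u_0$, and $b_\varepsilon\to b$ only a.e. Here I would first try interpolating $L^1$ convergence against the uniform $L^\infty$ bound, getting convergence in every $L^q$ with $q<\infty$. For positive times I would then use the Hölder/Schauder regularity above, which is uniform in $\varepsilon$ on compact sets, to upgrade this to local uniform convergence. Failing that, I would read the statement in the weak-$*$ $L^\infty$ sense.

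Finally, the claim about $m$ follows from the elementary estimate $|m_f(s)-m_g(s)|\le\|f-g\|_1$ recorded above. If $m_{u_\varepsilon(t)}(s)\le M(t,s)$ for all $\varepsilon$, then letting $\varepsilon\to0$ with $C([0,T];L^1)$ convergence gives $m_{u(t)}(s)\le M(t,s)$ with the same constant.
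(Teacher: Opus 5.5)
You follow the paper's own route: the same approximations, the Duhamel difference with the splitting $b_\varepsilon(u_\varepsilon^{1+k}-u^{1+k})+(b_\varepsilon-b)u^{1+k}$, a subdivision in time, and $|m_f-m_g|\le\|f-g\|_1$. The short-interval $L^1$ step and the passage to the limit in $m$ are fine. The gap is the continuation from the interval $[0,\tau]$ of Corollary~\ref{prop:aprioriest} to $[0,T]$ with $T<T^*$. A uniform step length needs $\sup_{[0,T]}\|u_\varepsilon\|_\infty\le 2M$, with $M:=\sup_{[0,T]}\|u\|_\infty$, uniformly in $\varepsilon$. You derive this from ``convergence on the previous step'', but the only convergence you have established is in $L^1$, and that says nothing about $\|u_\varepsilon(j\tau)\|_\infty$. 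The upgrade you sketch does not close the loop as written, for three reasons. First, $\varepsilon$-uniform Schauder/H\"older constants on a step already presuppose a uniform bound on the forcing $b_\varepsilon u_\varepsilon^{1+k}$ there, so the argument is circular. Second, local uniform convergence on compact sets does not control a supremum over $\R^d$. Third, under the weak-$*$ fallback you only get $\|u\|_\infty\le\liminf\|u_\varepsilon\|_\infty$, which is the wrong direction, so you would be stuck on $[0,\tau]$. The paper's text has the same lacuna: it quotes $0\le u_\varepsilon,u\le C$ from the Corollary, which is valid only on $[0,\tau]$, and then invokes ``the same subdivision argument''.

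The missing ingredient is $L^q$--$L^\infty$ smoothing inside a continuity argument. Set $T_\varepsilon:=\sup\{t\le T:\ \|u_\varepsilon\|_\infty\le 2M \text{ on } [0,t]\}$. On $[0,T_\varepsilon]$ the Lipschitz constant $(1+k)(2M)^k$ is uniform, so your subdivision estimate now legitimately gives $\sup_{[0,T_\varepsilon]}\|u_\varepsilon-u\|_1\to0$. By interpolation, $\|u_\varepsilon-u\|_q\to0$ for a fixed $q>d$. Restarting Duhamel at time $t-t_0$ gives, for $t\in[t_0,T_\varepsilon]$,
\[
\|u_\varepsilon(t)-u(t)\|_\infty\le C t_0^{-\frac{d}{2q}}\|(u_\varepsilon-u)(t-t_0)\|_q+C\int_{t-t_0}^{t}(t-r)^{-\frac12-\frac{d}{2q}}\bigl(\|(u_\varepsilon-u)(r)\|_q+\|((b_\varepsilon-b)u^{1+k})(r)\|_q\bigr)\,dr .
\]
This tends to $0$ uniformly in $t$: the kernel is integrable precisely because $q>d$, and the last term vanishes by dominated convergence. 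On $[0,t_0]$, Duhamel directly gives $\|u_\varepsilon(t)\|_\infty\le M+C\|b\|_\infty(2M)^{1+k}\sqrt{t_0}$. Choosing $t_0$ small, then $\varepsilon$ small, yields $\|u_\varepsilon\|_\infty<2M$ on $[0,T_\varepsilon]$, so $T_\varepsilon=T$. Hence $u_\varepsilon$ exists up to $T$, converges in $C([0,T];L^1)$, and converges uniformly on $[t_0,T]$ for each $t_0>0$.

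Your doubt about $L^\infty$ convergence near $t=0$ is justified and cannot be removed. As $t\to0^+$, $u(t)\to u_0$ weakly-$*$ while $u_\varepsilon(t)\to u_{0,\varepsilon}$ uniformly, so $\sup_{(0,\tau)}\|u_\varepsilon-u\|_\infty\ge\|u_{0,\varepsilon}-u_0\|_\infty$. For example, if $u_0$ is the indicator of a ball, this is at least $1/2$ for every continuous $u_{0,\varepsilon}$. The $L^\infty$ part of the statement must therefore be restricted to times bounded away from $0$. This is harmless for the $m$-estimates, which only use the $L^1$ convergence.
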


\section{Global Existence}
\label{sec:globex}
We now study the global existence in the subcritical and critical cases. As established in Subsection \ref{subsec:smoothreduction}, we can assume the solution to be smooth. Denote 
\begin{equation*}
    \Omega_s(t):=\{x \in \mathbb R^d: u(t,x)>u^*(t,s)\}.
\end{equation*}

\subsection{Entropy differential inequality}
By Lemma \ref{lem:exact-set}, define the function $$m(t,s):=\int_0^s u^*(t,z)\ dz$$
\begin{lemma}\label{lem:time}
For every $s$, and any maximizer $A_{t,s}$ of $m(t,s)$, the map $t\mapsto m(t,s)$ is absolutely continuous and it holds for almost every $t$,
\begin{equation}
 \partial_t m(t,s)=\int_{A_{t,s}}\partial_t u(t,x)\,d x.
\end{equation}
\end{lemma}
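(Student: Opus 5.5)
We prove the statement for the smooth positive solutions of Subsection \ref{subsec:smoothreduction}. The idea is the envelope (Danskin-type) argument: $m(t,s)$ is a supremum over sets of the linear functionals $t\mapsto\int_A u(t,x)\,dx$, and we differentiate this supremum at points where the maximizer is $A_{t,s}$. Fix $s>0$ and a compact interval $[0,T]$ inside the existence time.

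\textbf{Step 1: Lipschitz continuity.} By Lemma \ref{lem:exact-set}, for any $t_1,t_2$,
\begin{equation*}
  |m(t_2,s)-m(t_1,s)|\le \sup_{|A|=s}\Bigl|\int_A \bigl(u(t_2)-u(t_1)\bigr)\,dx\Bigr|\le \norm{u(t_2)-u(t_1)}_1,
\end{equation*}
which is the estimate $m_f-m_g\le\norm{f-g}_1$ already noted in Subsection \ref{subsec:smoothreduction}, applied symmetrically. Absolute continuity of $t\mapsto m(t,s)$ then follows once $t\mapsto u(t)$ is absolutely continuous in $L^1$. For the smooth approximations we write $u(t_2)-u(t_1)=\int_{t_1}^{t_2}\partial_t u$ and need $\partial_t u\in L^1((0,T),L^1)$, or at least $L^1_{\mathrm{loc}}$ in time.

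\textbf{Step 2: the main obstacle.} Establishing $\partial_t u\in L^1_tL^1_x$ is the delicate point. Proposition \ref{prop:energy_reg} only gives $\partial_t u\in L^2H^{-1}$. I would use the equation $\partial_t u=\Delta u-\dive(b_\varepsilon u^{1+k})$ together with parabolic $L^1$-type bounds. For the mollified problem with smooth bounded data, interior Schauder estimates and Gaussian tails from the Duhamel formula give $\Delta u,\nabla u\in L^1_x$ locally uniformly in $t\in(0,T]$. If that integrability is hard to secure, a fallback is to test only against maximizers: since $|A_{t,s}|=s$ is finite, it suffices that $\partial_t u(t)\in L^\infty_{\mathrm{loc}}$ uniformly, and to prove the Lipschitz bound via $\int_A(u(t_2)-u(t_1))\le s\sup|\partial_t u|$. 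The absolute continuity of $m$ itself only needs this bound, because the supremum is taken over sets of measure exactly $s$. This fallback is the route I would try first.

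\textbf{Step 3: derivative formula.} Let $t$ be a differentiability point of $m(\cdot,s)$ and let $A=A_{t,s}$. Since $A$ is admissible at every time,
\begin{equation*}
  m(t+h,s)-m(t,s)\ge \int_A\bigl(u(t+h)-u(t)\bigr)\,dx .
\end{equation*}
Divide by $h>0$ and by $h<0$, and let $h\to0$, using continuity of $\partial_t u$ and dominated convergence on the finite-measure set $A$. This yields $\partial_t m\ge\int_A\partial_t u$ from the right and $\partial_t m\le\int_A\partial_t u$ from the left. Equality therefore holds at a.e. $t$, for every maximizer $A_{t,s}$ simultaneously.
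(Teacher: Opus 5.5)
Your proposal is correct and is essentially the paper's argument: the bound $|m(t_1,s)-m(t_2,s)|\le\norm{u(t_1)-u(t_2)}_1$ from Lemma \ref{lem:exact-set}, followed by comparing the right and left difference quotients of $m(\cdot,s)$ with those of $\int_{A_{t,s}}u$, using that the fixed maximizer $A_{t,s}$ is admissible at every time. Your Step 2 is more careful than the paper, which infers absolute continuity directly from the $L^1$ bound even though $u\in C([0,T];L^1)$ alone only gives continuity; your uniform-in-$x$ bound on $\partial_t u$ for the smooth approximations, which yields the Lipschitz estimate $|m(t_2,s)-m(t_1,s)|\le s\sup|\partial_t u|\,|t_2-t_1|$, is what actually settles that point.
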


\begin{proof}
By Lemma \ref{lem:exact-set},
\begin{equation*}
    |m(t_1,s)-m(t_2,s)|\le \|u(t_1)-u(t_2)\|_1,
\end{equation*}
thus, the map $t \mapsto m(t,s)$ is absolutely continuous.
Again using Lemma \ref{lem:exact-set}, for every maximizer set $A_{t,s}$ at time $t$ for $m(t,s)$,
\begin{equation*}
\frac{m(t+h,s)-m(t,s)}{h} \geq\int_{A_{t,s}} \frac{u(t+h)-u(t)}{h}\, d x.
\end{equation*}
and 
\begin{equation*}
\frac{m(t,s)-m(t-h,s)}{h} \le\int_{A_{t,s}} \frac{u(t)-u(t-h)}{h}\, d x.
\end{equation*}
The thesis follows by sending $h\to0^+$.
\end{proof}
\begin{lemma}\label{lem:smooth-maximizer}
Fix $t>0$. For a.e. $s>0$ such that
\begin{equation*}
    m_{ss}(t,s)<0,
\end{equation*}
the value $u^*(t,s)$ is a regular value of $u(t,\cdot)$.
Consequently,
\begin{equation*}
    \Omega_s(t):=
    \{x\in\R^d:u(t,x)>u^*(t,s)\}
\end{equation*}
satisfies
\begin{equation*}
    |\Omega_s(t)|=s
\end{equation*}
and has smooth boundary
\begin{equation*}
    \partial\Omega_s(t)
    =
    \{x\in\R^d:u(t,x)=u^*(t,s)\}.
\end{equation*}
Moreover, every maximizing set $A$ of measure $s$ coincides
with $\Omega_s(t)$ up to a negligible set.
\end{lemma}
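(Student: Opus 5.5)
Fix $t>0$ and write $w:=u(t,\cdot)$, which by Subsection \ref{subsec:smoothreduction} is smooth and positive, with $w^*=u^*(t,\cdot)$ and $m_{ss}(t,s)=\frac{d}{ds}w^*(s)$ wherever this derivative exists. The plan is to combine Sard's theorem with the layer-cake relation between $w^*$ and $\mu_w$. By Sard's theorem, the set $\mathcal C$ of critical values of $w$ has Lebesgue measure zero in $\R$. I will show that the set $N:=\{s>0: w^* \text{ differentiable at } s,\ (w^*)'(s)<0,\ w^*(s)\in\mathcal C\}$ is negligible. Since $w^*$ is monotone, the preimage of a null set under a decreasing function, restricted to points of differentiability with nonzero derivative, is null. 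Concretely, I will apply the one-dimensional area formula (or Vitali covering) $\int_N |(w^*)'(s)|\,ds\le |w^*(N)|\le|\mathcal C|=0$, which forces $|N|=0$ because $|(w^*)'|>0$ on $N$. Hence for a.e. $s$ with $m_{ss}(t,s)<0$, the value $\lambda:=w^*(s)$ is regular.

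Next, for such $s$ I invoke the claim \eqref{eq:inverseidentity} established inside the proof of \eqref{eq:Mss-level}: at points where $(w^*)'(s)<0$ one has $\mu_w(w^*(s))=s$, i.e.\ $|\Omega_s(t)|=s$. Since $\lambda$ is a regular value and $w$ is smooth, $\{w=\lambda\}$ is a smooth embedded hypersurface where $\nabla w\neq0$, and by the implicit function theorem $\{w>\lambda\}$ lies locally on one side of it. Therefore $\partial\Omega_s(t)=\{w=\lambda\}$. Here I use that $\lambda>0$ and $w\in L^1$ is continuous and vanishes at infinity in measure, so the superlevel set has finite measure and its closure stays away from points where $w<\lambda$. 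The level set also has zero Lebesgue measure, being a smooth hypersurface.

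Finally, $|\{w=w^*(s)\}|=0$ implies $\{w>w^*(s)\}$ and $\{w\ge w^*(s)\}$ coincide up to null sets. By the characterization of maximizers in Lemma \ref{lem:exact-set}, every maximizing set $A$ with $|A|=s$ equals $\Omega_s(t)$ up to a negligible set.

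The main obstacle is the measure-theoretic step showing $|N|=0$. The rest follows from results already stated. For that step I would cover $N\cap\{|(w^*)'|>1/j\}$ by small intervals on which the difference quotient exceeds $1/(2j)$ in absolute value; monotonicity then bounds their total length by $2j$ times the measure of an open neighbourhood of $\mathcal C$, which can be taken arbitrarily small.
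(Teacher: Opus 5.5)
Your proposal is correct and follows the same route as the paper. Both use Sard's theorem for $u(t,\cdot)$, then the one-dimensional area/coarea inequality $\int_N |(u^*)'|\,ds\le |u^*(N)|\le|\mathcal C_t|=0$ for the monotone $u^*(t,\cdot)$, then the regular level-set theorem and Lemma \ref{lem:exact-set}. You also fill in steps the paper leaves implicit: the Vitali-covering proof of the area inequality, $|\Omega_s(t)|=s$ via \eqref{eq:inverseidentity}, and the boundary identification. Your remark about $w\in L^1$ vanishing at infinity is unnecessary, since $\overline{\{w>\lambda\}}\subseteq\{w\ge\lambda\}$ follows from continuity alone.
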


\begin{proof}
Note that $m_{ss}(t,s)=d/ds \ u^*(t,s).$ Let $\mathcal C_t$ denote the set of critical values of
$u(t,\cdot)$. By Sard's theorem \cite{morse1939behavior}\cite{sard1942measure},
\begin{equation*}
    |\mathcal C_t|=0.
\end{equation*}
The coarea formula
\begin{equation*}
    \int_{\{s:u^*(t,s)\in\mathcal C_t\}}|m_{ss(t,s)}|\,ds=0.
\end{equation*}
Therefore, for a.e. $s$ such that $m_{ss(t,s)}<0$, one has
$u^*(t,s)\notin\mathcal C_t$. Thus $u^*(t,s)$ is a regular value, and
the regular level-set theorem implies that
\begin{equation*}
    \{u(t,\cdot)=u^*(t,s)\}
\end{equation*}
is a smooth hypersurface, and
\begin{equation*}
    |\{u(t,\cdot)=u^*(t,s)\}|=0.
\end{equation*}
\end{proof}

\begin{lemma}\label{lem:diffusion}
	For almost every $s$, given any maximizer $A_{t,s}$, it holds
	\begin{equation}\label{eq:diffusion-bound}
		\int_{A_{t,s}}\Delta u\,d x
		\leq d^{2-2/d}\omega_d^{2/d} s^{2-2/d}m_{ss}(t,s).
	\end{equation}
\end{lemma}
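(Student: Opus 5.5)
We prove \eqref{eq:diffusion-bound} by splitting according to the sign of $m_{ss}(t,s)=\frac{d}{ds}u^*(t,s)$, which is $\le 0$ a.e. because $u^*$ is nonincreasing. The core case is $m_{ss}(t,s)<0$. On the set where $m_{ss}(t,s)=0$ the target bound reads $\int_{A_{t,s}}\Delta u\,dx\le 0$, and we handle it separately at the end.

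Suppose first that $m_{ss}(t,s)<0$. By Lemma \ref{lem:smooth-maximizer}, for a.e. such $s$ the level $u^*(t,s)$ is a regular value of $u(t,\cdot)$. Hence $A_{t,s}=\Omega_s(t)$ up to null sets, and $\Omega_s(t)$ has smooth boundary $\{u=u^*(t,s)\}$ with $|\Omega_s(t)|=s$.

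We first apply the divergence theorem on $\Omega_s(t)$. The outward normal there is $-\nabla u/|\nabla u|$, so
\begin{equation*}
\int_{\Omega_s(t)}\Delta u\,dx=-\int_{\{u=u^*(t,s)\}}|\nabla u|\,d\mathcal H^{d-1}.
\end{equation*}
Boundedness of $\Omega_s(t)$ causes no trouble: $u\in L^1$ and $u^*(t,s)>0$, so the superlevel set has finite measure, and since the level is regular the boundary is a compact smooth hypersurface. (Compactness needs $u\to0$ at infinity, which holds for the smooth approximations.)

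Next we use Cauchy--Schwarz on the level set $\Sigma=\{u=u^*(t,s)\}$:
\begin{equation*}
\mathcal H^{d-1}(\Sigma)^2\le \int_\Sigma|\nabla u|\,d\mathcal H^{d-1}\int_\Sigma\frac{1}{|\nabla u|}\,d\mathcal H^{d-1}.
\end{equation*}
Two inputs bound the right-hand factors.
\begin{itemize}
\item The isoperimetric inequality gives $\mathcal H^{d-1}(\Sigma)\ge \mathrm{Per}(B_s)$, where $B_s$ is the ball of volume $s$. Writing $s=\omega_d r^d/d$, this perimeter equals $\omega_d r^{d-1}=d^{1-1/d}\omega_d^{1/d}s^{1-1/d}$.
\item The proposition \eqref{eq:Mss-level} gives $\int_\Sigma|\nabla u|^{-1}d\mathcal H^{d-1}\le -1/m_{ss}(t,s)$, because $m_{ss}<0$.
\end{itemize}
Combining these,
\begin{equation*}
\int_\Sigma|\nabla u|\,d\mathcal H^{d-1}\ge -m_{ss}(t,s)\,d^{2-2/d}\omega_d^{2/d}s^{2-2/d},
\end{equation*}
which is exactly \eqref{eq:diffusion-bound} after inserting the sign from the divergence theorem.

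Now suppose $m_{ss}(t,s)=0$. This is the step I expect to be the main obstacle. Here $u^*$ is flat near $s$, so the level set $\{u=u^*(t,s)\}$ may have positive measure and the maximizer $A$ is not unique. We must show $\int_A\Delta u\le 0$ for every admissible $A$ with $\{u>c\}\subseteq A\subseteq\{u\ge c\}$, where $c=u^*(t,s)$.

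By Lemma \ref{lemma:stamp} applied to $u-c$, we have $D^2u=0$, hence $\Delta u=0$, a.e. on $\{u=c\}$. So $\int_A\Delta u=\int_{\{u>c\}}\Delta u$. This last integral is $\le0$, which we see by writing it as a limit of $\int_{\{u>c+\delta\}}\Delta u=-\int_{\{u=c+\delta\}}|\nabla u|\le0$ along regular values $\delta\downarrow0$ (Sard) and using dominated convergence. Only a.e. $s$ is claimed, so the remaining exceptional $s$, coming from Sard and from differentiability of $u^*$, are discarded.
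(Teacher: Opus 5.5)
Your proof is correct and follows essentially the same route as the paper. You use the same split on the sign of $m_{ss}$, the same Gauss--Green, Cauchy--Schwarz, \eqref{eq:Mss-level} and isoperimetric chain when $m_{ss}<0$, and the same Stampacchia reduction to $\int_{\{u>c\}}\Delta u\le 0$ when $m_{ss}=0$. The only cosmetic difference is in that last inequality: you pass to the limit along regular values $c+\delta\downarrow c$ using the divergence theorem, whereas the paper integrates by parts against a smooth nondecreasing cutoff $\chi_\varepsilon(u-c)$.
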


\begin{proof}
The proof is divided in two cases, $m_{ss}<0$ or $m_{ss}=0$. Assume that $m_{ss}=0$. By Lemma \ref{lemma:stamp} applied to $u(t)-u^*(t,s)$, it holds
    
\begin{equation*}
    \Delta u=0 \qquad\text{a.e. on }\{u=u^*(t,s)\}.
\end{equation*}
Hence
\begin{equation*}
    \int_{A_{t,s}}\Delta u\,dx
    =
    \int_{\Omega_s}\Delta u\,dx.
\end{equation*}
Let $\chi_\varepsilon$ be smooth and nondecreasing approximation of $\mathbf 1_{(0,\infty)}$. Then
\begin{equation*}
\begin{aligned}
    \int_{\mathbb R^d}
    \chi_\varepsilon(u-u^*(t,s))\Delta u\,dx
    &=
    -\int_{\mathbb R^d}
    \chi_\varepsilon'(u-u^*(t,s))|\nabla u|^2\,dx\\
    &\leq0.
\end{aligned}
\end{equation*}
Passing to the limit gives
\begin{equation*}
    \int_{\{u>q\}}\Delta u\,dx\leq0.
\end{equation*}
The thesis follows in this case. Assume now $m_ss<0$. By Lemma \ref{lem:smooth-maximizer}, and Gauss-Green, we can write
\begin{equation*}
  \int_{A_{t,s}}\Delta u\,d x
  = \int_{\Omega_s(t)}\Delta u\,d x=-\int_{\partial \Omega_s(t)}|\nabla u|\,d\mathcal H^{d-1}.
\end{equation*}
Using Cauchy-Schwarz, we can write
\begin{equation*}
	\begin{split}
		(\mathcal H^{d-1}(\partial \Omega_s))^2&\le
		\int_{\partial \Omega_s}|\nabla u|\,d\mathcal H^{d-1}
		\cdot\int_{\partial \Omega_s}\frac1{|\nabla u|}
		\,d\mathcal H^{d-1}\\
		&= -\int_{\Omega_s}\Delta u\,d x
		\cdot\int_{\partial \Omega_s}\frac1{|\nabla u|}
		\,d\mathcal H^{d-1}
	\end{split}
\end{equation*}
therefore from \eqref{eq:Mss-level} 
\begin{equation*}
  \int_{\Omega_s(t)}\Delta u\,d x\leq \Per^2(\Omega_s(t)) m_{ss}.
\end{equation*}
As $m_{ss}<0$, the isoperimetric inequality \cite{maggi2012sets} yields
\begin{equation*}
  \int_{\Omega_s(t)}\Delta u\,d x\le \Per^2(\Omega_s(t))m_{ss}
  \leq d^{2-2/d}\omega_d^{2/d}s^{2-2/d}m_{ss}.
\end{equation*}
This concludes the proof.
\end{proof}

\begin{lemma}\label{lem:drift}
For almost every $s$, given any maximizer $A_{t,s}$, it holds
\begin{equation}\label{eq:drift-bound}
  -\int_{A_{t,s}}\dive\bigl(bu^{1+k}\bigr)\,d x
  \leq p'\norm{ (\dive b(t))_-}_{L^{p,\infty}}
  s^{1-1/p}(m_s)^{1+k}.
\end{equation}
\end{lemma}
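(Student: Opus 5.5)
The idea is to remove the drift's boundary contribution by subtracting the level value. Fix $t$ and $s$, and write $q:=u^*(t,s)=m_s(t,s)$. Thanks to the reduction of Subsection \ref{subsec:smoothreduction}, $u(t)$ is smooth and strictly positive, so $q>0$. The set $\{u(t)>q\}$ then has finite measure, and it is bounded because $u$ is continuous and integrable. I split the flux as
\begin{equation*}
  \dive\bigl(bu^{1+k}\bigr)=\dive\bigl(b\,(u^{1+k}-q^{1+k})\bigr)+q^{1+k}\dive b .
\end{equation*}
The first term should contribute nothing after integration over the maximizer $A_{t,s}$, because $u^{1+k}-q^{1+k}$ vanishes on the boundary of the superlevel set. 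The second term is controlled by the Lorentz norm of $(\dive b)_-$.

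\textbf{Step 1: the first term integrates to zero.} By Lemma \ref{lem:exact-set}, $\{u>q\}\subseteq A_{t,s}\subseteq\{u\ge q\}$ up to null sets. On $\{u=q\}$, Lemma \ref{lemma:stamp} applied to $u-q$ gives $\nabla u=0$ a.e. Hence a.e. on this set
\begin{equation*}
\dive\bigl(b(u^{1+k}-q^{1+k})\bigr)=(u^{1+k}-q^{1+k})\dive b+(1+k)u^k\,b\cdot\nabla u=0 ,
\end{equation*}
so it suffices to integrate over $\{u>q\}$.

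This step is the main obstacle, because $q$ need not be a regular value and Gauss--Green cannot be applied directly. I avoid Lemma \ref{lem:smooth-maximizer} and use a cutoff instead. Let $\chi_\varepsilon$ be a smooth nondecreasing approximation of $\mathbf 1_{(0,\infty)}$, supported in $(0,\infty)$ with $\chi_\varepsilon=1$ on $[\varepsilon,\infty)$. Integrating by parts gives
\begin{equation*}
\int \chi_\varepsilon(u-q)\,\dive\bigl(b(u^{1+k}-q^{1+k})\bigr)\,dx=-\int \chi'_\varepsilon(u-q)\,(u^{1+k}-q^{1+k})\,b\cdot\nabla u\,dx .
\end{equation*}
On the support of $\chi'_\varepsilon$ we have $0<u^{1+k}-q^{1+k}\le C\varepsilon$ and $\chi'_\varepsilon\le C/\varepsilon$. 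The right-hand side is therefore bounded by
\begin{equation*}
C\norm{b}_\infty\int_{\{q<u<q+\varepsilon\}}|\nabla u|\,dx .
\end{equation*}
This tends to $0$ as $\varepsilon\to0$, since $|\nabla u|$ is integrable on the bounded set $\{u>q\}$ and the sets $\{q<u<q+\varepsilon\}$ shrink to the empty set. The left-hand side converges to the integral over $\{u>q\}$ by dominated convergence.

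\textbf{Step 2: estimate of the second term.} Using the rearrangement property, Hardy--Littlewood and the definition of the $L^{p,\infty}$ norm,
\begin{equation*}
-q^{1+k}\int_{A_{t,s}}\dive b\,dx\le q^{1+k}\int_{A_{t,s}}(\dive b)_-\,dx\le q^{1+k}\int_0^s\bigl((\dive b)_-\bigr)^*(r)\,dr .
\end{equation*}
Bounding the integrand by $r^{-1/p}\norm{(\dive b(t))_-}_{L^{p,\infty}}$ and integrating gives
\begin{equation*}
q^{1+k}\norm{(\dive b(t))_-}_{L^{p,\infty}}\int_0^s r^{-1/p}\,dr = p'\,s^{1-1/p}\norm{(\dive b(t))_-}_{L^{p,\infty}}\,(m_s)^{1+k},
\end{equation*}
which is exactly \eqref{eq:drift-bound}. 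The integral $\int_0^s r^{-1/p}\,dr$ is finite because $p>1$.

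Finally, the estimate passes from the smooth approximations to general solutions as stated in Subsection \ref{subsec:smoothreduction}. The Lorentz bound holds for the mollified drift, with the constant discussed there.
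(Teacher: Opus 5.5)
Your proof is correct and follows essentially the paper's argument. The paper disposes of $\dive\bigl(b(u^{1+k}-q^{1+k})\bigr)$ in one step by integrating $\dive\bigl(b\,\Psi_q(u)\bigr)$ over $\R^d$, with the Lipschitz truncation $\Psi_q(r)=(r^{1+k}-q^{1+k})_+$; your cutoff $\chi_\varepsilon$ is just a smoothing of this. Your Step 2 is exactly the paper's Lorentz--H\"older bound with $\norm{\mathbf 1_{A_{t,s}}}_{L^{p',1}}=p's^{1/p'}$.

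One small repair is needed. Continuity and integrability alone do not make $\{u(t)>q\}$ bounded: bumps of fixed height and shrinking width escaping to infinity are a counterexample. Boundedness does hold here, because $u(t)\in C_0(\R^d)$, which follows from the Duhamel formula \eqref{eq:duhamel}.
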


\begin{proof}
By characterization of the maximizer, 
\begin{equation*}A_{t,s}=\{u>u^*(t,s)\}\cup B,
\qquad
B\subseteq\{u=u^*(t,s)\}.
\end{equation*}
Define
\begin{equation*}
\Psi_{u^*(t,s)}(r):=(r^{1+k}-(u^*(t,s))^{1+k})_+.
\end{equation*}
By the chain rule,
\begin{equation*}
\nabla\Psi_{u^*(t,s)}(u)
=
(1+k)u^k\nabla u\,\mathbf 1_{\{u>_{u^*(t,s)}\}}.
\end{equation*}
Integrating over $\mathbb R^d$ gives
\begin{equation*}
\int_{\{u>u^*(t,s)\}}
\dive\!\left(bu^{1+k}\right)\,dx
=
(u^*(t,s))^{1+k}
\int_{\{u>u^*(t,s)\}}\dive b\,dx.
\end{equation*}
On the other hand, by Lemma \ref{lemma:stamp},
\begin{equation*}
\nabla u=0
\qquad\text{a.e. on }\{u=r\},
\end{equation*}
for every $r\in R$. Hence, a.e. on $B$,
\begin{equation*}
\dive\!\left(bu^{1+k}\right)
=
u^{1+k}\dive b
+(1+k)u^k b\cdot\nabla u
=
u^{1+k}\dive b.
\end{equation*}
Combining the two identities yields
\begin{equation*}
\int_{A_{t,s}}
\dive\!\left(bu^{1+k}\right)\,dx
=
(u^*(t,s))^{1+k}
\int_{A_{t,s}}\dive b\,dx.
\end{equation*}
Consequently,
\begin{equation*}
-\int_{A_{t,s}}
\dive\!\left(bu^{1+k}\right)\,dx
\leq
(u^*(t,s))^{1+k}
\int_{A_{t,s}}(\dive b)_-\,dx.
\end{equation*}

The thesis follow from Theorem \ref{Theo:holder_lorentz}, since $\|1_{A_{t,s}}\|_{L^{p',1}}= p' |A_{t,s}|^{1/p'}= p' s^{1/p'}.$

\end{proof}

Combining the Lemmas \ref{lem:time}, \ref{lem:diffusion} and \ref{lem:drift} it holds the following differential inequality.

\begin{proposition}
For almost every $s$, it holds
\begin{equation}\label{eq:concentration-raw}
  \partial_t m
  \leq d^{2-2/d}\omega_d^{2/d}s^{2-2/d}m_{ss}
  +p'\norm{ {(\dive b(t))_-}}_{L^{p,\infty}}
   s^{1-1/p}(m_s)^{1+k}.
\end{equation}

\end{proposition}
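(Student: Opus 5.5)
The inequality \eqref{eq:concentration-raw} follows by integrating the equation \eqref{eq:pde} over a maximizing set and estimating the two resulting terms with the lemmas just proved. Throughout we work with the smooth solution provided by Subsection \ref{subsec:smoothreduction}.

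First fix $t$ at which Lemma \ref{lem:time} applies, and fix $s$ outside the null sets excluded in Lemmas \ref{lem:diffusion} and \ref{lem:drift}. Choose a maximizer $A_{t,s}$ of $m(t,s)$, whose existence is guaranteed by Lemma \ref{lem:exact-set}; for instance, $\{u>u^*(t,s)\}$ completed by part of the level set $\{u=u^*(t,s)\}$. Since $u$ is classical, the equation holds pointwise, and Lemma \ref{lem:time} gives
\begin{equation*}
\partial_t m(t,s)=\int_{A_{t,s}}\partial_t u\,dx=\int_{A_{t,s}}\Delta u\,dx-\int_{A_{t,s}}\dive\bigl(bu^{1+k}\bigr)\,dx .
\end{equation*}
The first term is bounded by Lemma \ref{lem:diffusion} by $d^{2-2/d}\omega_d^{2/d}s^{2-2/d}m_{ss}$. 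The second term is bounded by Lemma \ref{lem:drift} by $p'\norm{(\dive b(t))_-}_{L^{p,\infty}}s^{1-1/p}(u^*(t,s))^{1+k}$. Finally, $u^*(t,s)=m_s(t,s)$ for a.e.\ $s$, by the Lebesgue differentiation theorem applied to the monotone function $u^*(t,\cdot)$. Adding the two bounds yields \eqref{eq:concentration-raw}.

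The main obstacle is the order of the quantifiers. Lemma \ref{lem:time} holds for each fixed $s$ and a.e.\ $t$, while the other two lemmas hold for each fixed $t$ and a.e.\ $s$. The natural way around this is to state the inequality for a.e.\ $(t,s)$, which is how \eqref{eq:concentration-raw} will be used later (in the distributional or barrier sense). To justify this, we need the exceptional set in the $(t,s)$ plane to be measurable with null sections. We can check this by Fubini, using that $m$ is continuous in both variables and that $m_{ss}$ is a measurable function of $(t,s)$.

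For the smooth approximations we can do better and avoid the problem altogether. Since $u$ is $C^1$ in $t$ with $u_t$ locally bounded, $\partial_t m$ exists for every $t$ at every $s$ where the maximizer is unique up to null sets, which by Lemma \ref{lem:smooth-maximizer} holds whenever $m_{ss}<0$. In that case the one-sided difference quotients in the proof of Lemma \ref{lem:time} coincide. Where $m_{ss}=0$, it suffices to use the one-sided bound $\limsup_{h\to0^+}(m(t,s)-m(t-h,s))/h\le\int_{A_{t,s}}\partial_t u\,dx$, which is already the direction needed for a supersolution comparison.
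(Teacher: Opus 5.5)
Your argument is correct and follows the paper's own proof, which obtains the proposition by integrating the equation over a maximizer $A_{t,s}$ and combining Lemmas~\ref{lem:time}, \ref{lem:diffusion} and~\ref{lem:drift}, with $u^*(t,s)=m_s$ for a.e.\ $s$. Your point about quantifiers is a correct refinement the paper leaves implicit: Lemma~\ref{lem:time} holds for each $s$ and a.e.\ $t$, the other two for each $t$ and a.e.\ $s$, so the inequality should be read for a.e.\ $(t,s)$ via Fubini. Your optional last paragraph is not needed, and uniqueness of the maximizer at time $t$ alone does not make the one-sided quotients coincide without a stability argument for maximizers at nearby times. The one-sided Dini bound you mention already suffices for the comparison.
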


\subsection{Stationary solution}
Denote $a_d=d^{2-2/d}\omega_d^{2/d}$ and $a_p=a_p(T)=\sup_{t\in [0,T]}p'\norm{ {(\dive b)_-(t)}}_{L^{p,\infty}}$
We seek a stationary solution $\Phi$ for the equation
\begin{equation}\label{eq:stationary-equation}
  a_d s^{2-2/d}\Phi''
  +a_ps^{1-1/p}(\Phi')^{1+k}=0.
\end{equation}

\begin{proposition}
 For any $\lambda>0$, define
\begin{equation}\label{eq:profile-slope}
  \varphi_\lambda(z):=
  \left(
    \lambda+\frac{k a_p}{a_dk_c}z^{k_c}
  \right)^{-1/k},
  \qquad z\geq0,
\end{equation}
and
\begin{equation}\label{eq:profile-primitive}
  \Phi_{\lambda}(s):=\int_0^s\varphi_\lambda(z)\ dz.
\end{equation}
Then $\Phi_{\lambda}$ solves \eqref{eq:stationary-equation} on $(0,\infty)$. 
\end{proposition}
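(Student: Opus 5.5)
The plan is a direct verification: differentiate the explicit profile and check that the two terms in \eqref{eq:stationary-equation} cancel. Throughout, set
\begin{equation*}
  c:=\frac{k a_p}{a_d k_c}\ge 0 .
\end{equation*}
For every $z>0$ we have $\lambda+cz^{k_c}\ge\lambda>0$, so $\varphi_\lambda$ is smooth and positive on $(0,\infty)$. It is also bounded by $\lambda^{-1/k}$ and continuous on $[0,\infty)$, because $k_c>0$. Hence $\Phi_\lambda$ is well defined, belongs to $C^2(0,\infty)$, and satisfies $\Phi_\lambda'=\varphi_\lambda$ and $\Phi_\lambda''=\varphi_\lambda'$ on $(0,\infty)$.

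The first step is the chain rule applied to $\varphi_\lambda=(\lambda+cz^{k_c})^{-1/k}$:
\begin{equation*}
  \varphi_\lambda'(z)
  =-\frac1k\,(\lambda+cz^{k_c})^{-1/k-1}\,c\,k_c\,z^{k_c-1}.
\end{equation*}
Two simplifications then give a clean form. First, $(\lambda+cz^{k_c})^{-1/k-1}=\varphi_\lambda(z)^{1+k}$. Second, $\tfrac{c\,k_c}{k}=\tfrac{a_p}{a_d}$. Together they yield
\begin{equation*}
  \Phi_\lambda''(z)=-\frac{a_p}{a_d}\,z^{k_c-1}\,\bigl(\Phi_\lambda'(z)\bigr)^{1+k}.
\end{equation*}

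The second step is to multiply this identity by $a_d z^{2-2/d}$. The resulting power of $z$ is
\begin{equation*}
  2-\tfrac2d+k_c-1=2-\tfrac2d+\tfrac2d-\tfrac1p-1=1-\tfrac1p,
\end{equation*}
using the definition \eqref{eq:critical-exponent} of $k_c$. This gives exactly
\begin{equation*}
  a_d z^{2-2/d}\Phi_\lambda''=-a_p z^{1-1/p}(\Phi_\lambda')^{1+k},
\end{equation*}
which is \eqref{eq:stationary-equation}.

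There is no genuine obstacle here. The only points needing care are that $k_c>0$ and $\lambda>0$, which keep $\lambda+cz^{k_c}$ positive so that all powers are legitimate. The degenerate case $a_p=0$ is harmless: then $\varphi_\lambda\equiv\lambda^{-1/k}$ is constant and both terms of the equation vanish.
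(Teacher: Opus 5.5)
Your proposal is correct and is the same argument as the paper's. The paper also differentiates to get $\varphi_\lambda'(z)=-\frac{a_p}{a_d}z^{k_c-1}\varphi_\lambda(z)^{1+k}$, multiplies by $a_d z^{2-2/d}$, and uses $(2-\frac2d)+(k_c-1)=1-\frac1p$ together with $\Phi_\lambda'=\varphi_\lambda$.
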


\begin{proof}
It holds
\begin{equation*}
  \varphi_\lambda'(z)
  =-\frac {a_p}{a_d}z^{k_c-1}\varphi_\lambda(z)^{1+k},
\end{equation*}
and
\begin{equation*}
  \left(2-\frac2d\right)+(k_c-1)
  =1-\frac1p,
\end{equation*}
hence
\begin{equation*}
  a_d z^{2-2/d}\varphi_\lambda'
  +a_p z^{1-1/p}\varphi_\lambda^{1+k}=0.
\end{equation*}
Using $\Phi_\lambda'=\varphi_\lambda$, the result follows.
\end{proof}
\begin{lemma}\label{lem:barlambda}
 
Let $\bar s=\frac{\|u(0)\|_1}{\|u(0)\|_\infty}$, $A:=\dfrac{k\,a_p(T)}{a_d\,k_c}$, and define
\begin{equation}\label{eq:uniformconst}
\bar\lambda_T:=
\begin{cases}
A\,\bar s^{\,k_c}\exp\ \Big(-k_c\,(2A)^{1/k_c}\,\|u_0\|_1\Big),
& k=k_c,\\[2ex]
\min\left\{
A\left(\dfrac{k}{2(k_c-k)\,(2A)^{1/k}\,\|u_0\|_1}\right)^{\frac{k\,k_c}{k_c-k}},\;
A\left(2^{-\frac{k}{k_c-k}}\,\bar s\right)^{k_c}
\right\},
& 0<k<k_c,
\end{cases}
\end{equation}
 It holds 
  \begin{equation*}
    \Phi_{\bar\lambda_T}\left(\bar s\right)\ge \|u(0)\|_1.
  \end{equation*}
\end{lemma}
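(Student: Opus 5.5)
The plan is to bound $\Phi_\lambda(\bar s)$ from below by cutting the integral \eqref{eq:profile-primitive} at the volume $z_0:=(\lambda/A)^{1/k_c}$, where the two terms inside $\varphi_\lambda$ in \eqref{eq:profile-slope} are equal. For $z\ge z_0$ we have $\lambda\le Az^{k_c}$, hence
\begin{equation*}
  \varphi_\lambda(z)\ge (2Az^{k_c})^{-1/k}=(2A)^{-1/k}z^{-k_c/k}.
\end{equation*}
Provided $z_0\le\bar s$, and since $\varphi_\lambda\ge0$, this gives
\begin{equation*}
  \Phi_\lambda(\bar s)\ge (2A)^{-1/k}\int_{z_0}^{\bar s}z^{-k_c/k}\,dz .
\end{equation*}
Everything then reduces to checking that, for $\lambda=\bar\lambda_T$, the right-hand side is at least $\|u_0\|_1$. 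We take $\bar s=\|u_0\|_1/\|u_0\|_\infty$, but only the value of $\|u_0\|_1$ enters the target inequality. We assume $A>0$. In the degenerate case $a_p=0$ the profile is constant, $\varphi_\lambda\equiv\lambda^{-1/k}$, and $\bar\lambda_T=0$ is not admissible in \eqref{eq:profile-slope}. There we would simply choose $\lambda=\|u_0\|_\infty^{k}$, which gives $\Phi_\lambda(\bar s)=\bar s\|u_0\|_\infty=\|u_0\|_1$ directly.

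\emph{Critical case $k=k_c$.} Here the integrand is $(2A)^{-1/k}z^{-1}$, so
\begin{equation*}
  \Phi_\lambda(\bar s)\ge(2A)^{-1/k}\log(\bar s/z_0).
\end{equation*}
With $\lambda=\bar\lambda_T=A\bar s^{k_c}\exp(-k_c(2A)^{1/k_c}\|u_0\|_1)$ we get $z_0=\bar s\exp(-(2A)^{1/k}\|u_0\|_1)$. In particular $z_0\le\bar s$, and $\log(\bar s/z_0)=(2A)^{1/k}\|u_0\|_1$, which gives exactly $\Phi_{\bar\lambda_T}(\bar s)\ge\|u_0\|_1$.

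\emph{Subcritical case $0<k<k_c$.} Set $q=k_c/k>1$, so that $q-1=(k_c-k)/k$. Then
\begin{equation*}
  \int_{z_0}^{\bar s}z^{-q}\,dz=\frac{z_0^{1-q}-\bar s^{1-q}}{q-1}.
\end{equation*}
The second entry of the minimum in \eqref{eq:uniformconst} is designed to give $z_0\le 2^{-k/(k_c-k)}\bar s$. This is equivalent to $z_0^{1-q}\ge 2\bar s^{1-q}$, and then the integral is at least $\frac{k}{2(k_c-k)}z_0^{-(k_c-k)/k}$. The required bound $(2A)^{-1/k}\frac{k}{2(k_c-k)}z_0^{-(k_c-k)/k}\ge\|u_0\|_1$ rearranges to
\begin{equation*}
  z_0\le\left(\frac{k}{2(k_c-k)(2A)^{1/k}\|u_0\|_1}\right)^{k/(k_c-k)} .
\end{equation*}
Since $\lambda=Az_0^{k_c}$, this is precisely $\lambda$ being at most the first entry of the minimum. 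Because $\lambda=\bar\lambda_T$ is the minimum of the two entries, both conditions hold simultaneously and the claim follows. Note also that $\varphi_\lambda$, and hence $\Phi_\lambda(\bar s)$, is decreasing in $\lambda$, so any smaller $\lambda$ works as well.

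No step here is deep. The only point needing care is the exponent bookkeeping in the subcritical case: one must verify that $\lambda\le A(2^{-k/(k_c-k)}\bar s)^{k_c}$ really yields $z_0\le\bar s$ together with the factor-$2$ absorption of the $\bar s^{1-q}$ term, and that the first entry of the minimum matches the algebraic rearrangement above. I would check these identities line by line.
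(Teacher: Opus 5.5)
Your proof is correct and follows the paper's argument essentially line by line: the cut at $z_\lambda=(\lambda/A)^{1/k_c}$, the pointwise bound $\varphi_\lambda(z)\ge(2A)^{-1/k}z^{-k_c/k}$ for $z\ge z_\lambda$, the logarithmic integral in the critical case, and the absorption of the $\bar s^{1-q}$ term in the subcritical case, which uses the factor $2$ supplied by the second entry of the minimum. There is one small slip in your aside on the degenerate case $a_p=0$, which lies outside the statement: there $\varphi_\lambda\equiv\lambda^{-1/k}$, so you need $\lambda=\|u_0\|_\infty^{-k}$, not $\|u_0\|_\infty^{k}$, to obtain $\Phi_\lambda(\bar s)=\bar s\|u_0\|_\infty=\|u_0\|_1$.
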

\begin{proof}

Set $z_\lambda:=(\lambda/A)^{1/k_c}$, so that $\lambda\le A z^{k_c}$ for $z\ge z_\lambda$ and hence
\begin{equation}\label{eq:tail-lb}
\varphi_\lambda(z)=\big(\lambda+Az^{k_c}\big)^{-1/k}\ \ge\ (2A)^{-1/k}\,z^{-k_c/k},
\qquad z\ge z_\lambda .
\end{equation}
Assume $z_\lambda\le\bar s$. Integrating \eqref{eq:tail-lb} on $(z_\lambda,\bar s)$,
\begin{equation}\label{eq:tail-int}
\Phi_\lambda(\bar s)\ \ge\ (2A)^{-1/k}\int_{z_\lambda}^{\bar s} z^{-k_c/k}\,dz .
\end{equation}

\emph{Critical case $k=k_c$.} Equation \eqref{eq:tail-int} becomes
\begin{equation*}
\Phi_\lambda(\bar s)\ \ge\ (2A)^{-1/k_c}\log\frac{\bar s}{z_\lambda}
\ =\ \frac{(2A)^{-1/k_c}}{k_c}\,\log\frac{A\,\bar s^{\,k_c}}{\lambda},
\end{equation*}
using $z_\lambda^{k_c}=\lambda/A$. The right-hand side is $\ge \|u_0\|_1$ precisely when
$\lambda\le A\bar s^{\,k_c}\exp\big(-k_c(2A)^{1/k_c}\|u_0\|_1\big)$, which is the stated $\bar\lambda$.
This value satisfies the condition $z_{\bar\lambda}\le\bar s$, as required above.

\emph{Subcritical case $0<k<k_c$.} Now $k_c/k>1$, so \eqref{eq:tail-int} yields
\begin{equation*}
\Phi_\lambda(\bar s)\ \ge\ (2A)^{-1/k}\,\frac{k}{k_c-k}
\Big(z_\lambda^{-\frac{k_c-k}{k}}-\bar s^{\,-\frac{k_c-k}{k}}\Big).
\end{equation*}
If moreover $z_\lambda\le 2^{-\frac{k}{k_c-k}}\bar s$, then
$\bar s^{-\frac{k_c-k}{k}}\le\frac12 z_\lambda^{-\frac{k_c-k}{k}}$ and therefore
\begin{equation*}
\Phi_\lambda(\bar s)\ \ge\ (2A)^{-1/k}\,\frac{k}{2(k_c-k)}\,z_\lambda^{-\frac{k_c-k}{k}} .
\end{equation*}
So the condition $\Phi_\lambda(\bar s)\ge \|u_0\|_1$ is satisfied when it holds
\begin{equation*}
  z_\lambda^{\frac{k_c-k}{k}}\le\frac{k}{2(k_c-k)(2A)^{1/k}\|u_0\|_1},
\end{equation*} therefore, when
\begin{equation*}
\lambda\ =\ A z_\lambda^{k_c}\ \le\
A\left(\frac{k}{2(k_c-k)\,(2A)^{1/k}\,\|u_0\|_1}\right)^{\frac{k\,k_c}{k_c-k}} .
\end{equation*}
Taking $\bar\lambda$ to be the minimum of this quantity and
$A\big(2^{-\frac{k}{k_c-k}}\bar s\big)^{k_c}$ ensures both this inequality and the restriction
$z_{\bar\lambda}\le 2^{-\frac{k}{k_c-k}}\bar s$ used above, which proves the claim.
\end{proof}
\begin{lemma}\label{lem:ineqPhim}
It holds
\begin{equation*}
    \Phi_{\bar\lambda_T}(s)\ge m(t,s)
\end{equation*}
for every $t\in[0, T]$, $T<T^*$. 
\end{lemma}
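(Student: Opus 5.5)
We argue by a comparison principle for the parabolic inequality \eqref{eq:concentration-raw} in the variables $(t,s)$, with $\Phi:=\Phi_{\bar\lambda_T}$ a stationary supersolution. Set $w(t,s):=m(t,s)-\Phi(s)$ on $[0,T]\times[0,\infty)$; we aim to show $w\le 0$. Throughout, $u$ is smooth and positive by Subsection \ref{subsec:smoothreduction}, so $m(t,\cdot)$ is concave, $C^1$ in $s$, with $m_s=u^*\ge0$. Since the final estimate passes to the limit, it suffices to prove it for the approximations.

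\emph{Step 1: boundary and initial data.} At $s=0$ we have $w(t,0)=0$. For $s\ge\bar s$, mass conservation and monotonicity of $\Phi$ together with Lemma \ref{lem:barlambda} give
\begin{equation*}
m(t,s)\le\|u(t)\|_1=\|u_0\|_1\le\Phi(\bar s)\le\Phi(s).
\end{equation*}
Hence it is enough to work on $s\in(0,\bar s)$. At $t=0$ we use $m(0,s)\le\|u_0\|_\infty s$. We need $\Phi(s)\ge\|u_0\|_\infty s$ on $[0,\bar s]$. Since $\Phi$ is concave with $\Phi(0)=0$ and $\Phi(\bar s)\ge\|u_0\|_1=\|u_0\|_\infty\bar s$, concavity gives $\Phi(s)\ge \frac{s}{\bar s}\Phi(\bar s)\ge \|u_0\|_\infty s$. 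So $w(0,\cdot)\le0$.

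\emph{Step 2: comparison.} Suppose $w$ becomes positive. Following the standard trick, consider $w_\varepsilon:=m-\Phi-\varepsilon(1+t)$, or alternatively perturb $\lambda$ slightly below $\bar\lambda_T$ to obtain a strict supersolution. Let $t_0$ be the first time at which $\max_{s\in[0,\bar s]}w_\varepsilon(t,s)=0$, attained at some $s_0$, which is interior by Step 1. At $(t_0,s_0)$ we have $m_s=\Phi'$ and $m_{ss}\le\Phi''$ in a suitable sense, so by \eqref{eq:concentration-raw} and \eqref{eq:stationary-equation}
\begin{equation*}
\partial_t m\le a_ds^{2-2/d}m_{ss}+a_ps^{1-1/p}(m_s)^{1+k}\le a_ds^{2-2/d}\Phi''+a_ps^{1-1/p}(\Phi')^{1+k}=0 .
\end{equation*}
This contradicts $\partial_t w_\varepsilon\ge0$ at a first touching time, because $\partial_t(\varepsilon(1+t))=\varepsilon>0$. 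Letting $\varepsilon\to0$ yields the claim.

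\emph{Main obstacle.} The main difficulty is rigor, not algebra. Inequality \eqref{eq:concentration-raw} holds only for a.e.\ $s$. The map $t\mapsto m$ is merely absolutely continuous, and $m_{ss}=\partial_s u^*$ exists only a.e., possibly with a singular part. Pointwise maximum-principle reasoning is therefore not directly justified. I would first try a weak formulation: test the inequality against $(w_\varepsilon)_+$ (or $\eta(s)(w)_+$), integrate by parts in $s$ using concavity of $m$ (so the singular part of $m_{ss}$ has a favorable sign), and control the nonlinear term via $|(m_s)^{1+k}-(\Phi')^{1+k}|\le C|m_s-\Phi'|$. This Lipschitz bound is available since $m_s\le\|u\|_{L^\infty((0,T)\times\R^d)}$ for $T<T^*$, and $\Phi'$ is bounded by $\bar\lambda_T^{-1/k}$. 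The result is a Gronwall inequality for $\int (w)_+^2 \,\rho(s)\,ds$ with a weight $\rho$ compensating the degenerate coefficient $s^{2-2/d}$ near $s=0$. Weight handling near $s=0$ is where I expect the most care to be needed; there the boundary values $w(t,0)=0$ and the bounded slopes should make the boundary terms vanish.
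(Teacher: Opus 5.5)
This is essentially the paper's proof. Your Step 1 is the paper's treatment of $t=0$ and of $s\ge\bar s$, and the weak formulation in your last paragraph is exactly the paper's argument: test with $v=(m-\Phi)_+$, use the weight $s^{2/d-2}$ and the Lipschitz bound with $K=\max\{\bar\lambda_T^{-1/k},\sup_{[0,T]}\|u(t)\|_\infty\}$, close via Young's inequality and $s^{2k_c-2}\le\bar s^{2/d-2/p}s^{2/d-2}$ (which uses $p>d$), then apply Gronwall, so the pointwise first-touching step is not needed. One correction: if \eqref{eq:concentration-raw} holds only with the pointwise a.e.\ derivative, a nonpositive singular part of $m_{ss}$ would be \emph{unfavorable}, not favorable; the integration by parts in $s$ is legitimate because there is no such part, since $u(t)\in H^1$ makes $u^*(t,\cdot)$ locally absolutely continuous on $(0,\infty)$ by P\'olya--Szeg\H{o}.
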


\begin{proof}
By Lemma \ref{lem:barlambda},
\begin{equation*}
    \Phi_{\bar\lambda_T}(\bar s)\ge \|u(0)\|_1.
\end{equation*}
It is easy to prove that
\begin{equation*}
    m(0,s)\le \min\{\|u(0)\|_\infty s, \|u(0)\|_1\}.
\end{equation*}
Also, since $\Phi_{\bar\lambda_T}$ is increasing and concave, $s\mapsto\Phi_{\bar\lambda_T}(s)/s$ is decreasing. For $s\le\bar s$, 
\begin{equation*}
    \frac{\Phi_{\bar\lambda_T}(s)}{s}\ge\frac{\Phi_{\bar\lambda_T}(\bar s)}{\bar s}\ge \frac{\|u(0)\|_1}{\bar s}={\|u(0)\|_\infty},
\end{equation*}
so that
\begin{equation*}
    \Phi_{\bar\lambda_T}(s)\ge \|u(0)\|_\infty s\ge m(0,s).
\end{equation*}
For $s>\bar s$, and every $t\ge 0$,
\begin{equation*}
    \Phi_{\bar\lambda_T}(s)\ge \Phi_{\bar\lambda_T}(\bar s) \ge \|u(0)\|_1=\|u(t)\|_1\ge m(t,s).
\end{equation*}
Now consider $v=(m-\Phi)^+$. For every $t<T<T^*$, by Mean Value Theorem, 
\begin{equation*}
  (m_s)^{1+k}-(\Phi')^{1+k}\le (1+k) K^k |v_s|,
\end{equation*}
where $K = \max \{ {\bar\lambda_T}^{-1/k}, \sup_{[0,T]} \|u(t)\|_\infty \}$. Using the inequality for $m$ and $\Phi$, it holds
\begin{equation*}
  \begin{split}
  \frac{d}{dt} \int_0^{\bar s} s^{2/d-2} v^2 &\le -a_d \int_0^{\bar s}  (v_s)^2+ C K^k \int_0^{\bar s} s^{k_c-1} |v_s|v\\
  &\le C K^k {\bar s}^{2/d-2/p}\int_0^{\bar s} s^{2/d-2} v^2
  \end{split}
\end{equation*}
and from Gronwall, $m(t,s)\le \Phi(s)$ for every $s\le \bar s$ and every time $t<T<T^*$. This concludes the proof.
\end{proof}

\subsection{Proof of global existence}

\begin{proof}[Proof of Theorem \ref{thm:global-main}]

For every $T<T^*$, using Lemma \ref{lem:ineqPhim} and l'Hôpital's rule, it holds
\begin{equation*}
  \norm{u(T)}_\infty=u^*(T,0)
  =\lim_{s\to0^+}\frac{m(t,s)}s
  \leq\lim_{s\to0^+}\frac{\Phi_{\bar\lambda_T}(s)}s
  =\lambda_T^{-1/k}.
\end{equation*}
In particular, if $T^*$ where finite, then the inequality
\begin{equation*}
  \norm{u(T)}_\infty\le\lambda_{T^*}^{-1/k}.
\end{equation*}
leads to a contradiction.
\end{proof}

\section{Blow-up}\label{sec:blowup}

We now deal with the supercritical case, where we will provide examples of blow-up in finite time. The result for the supercritical case generalizes the works \cite{toscani2012finite}, \cite[Section 5]{bianchini2024existence}. A similar construction can be used to find blow-up examples in the case $b\in L_t^\infty L_x^p$, $k>\frac{1}{d}-\frac{1}{p}$.
\subsection{Supercritical case}\label{ss:supcrit}
Assume condition $k>\frac{2}{d}-\frac{1}{p}$. Let $\alpha$ such that  $k d> 1 + \alpha > 2-d/p$ and consider a bounded function $b$ with $\dive_x b \in L^p$, possibly smooth out of the origin, such that
\begin{equation}
\label{eq:syst:b:2}
     |b(x)| \text{ is } \begin{cases}
         =|x|^\alpha &|x|\le \bar \rho\\
         \ge \beta (\bar \rho)^\alpha &|x|\ge \bar \rho
     \end{cases} , \quad  \beta\in\left[\left(\frac{dk-\alpha-1}{dk-1}\right)^\frac{k}{\alpha},1\right).
\end{equation}
and we impose $x \cdot b= -|x||b|$, i.e. b of the kind $b=-w(|x|)x$.

\begin{proposition}
\label{lemma:func1:2}
For every measurable positive function $f: \R^d \to [0,\infty)$, the following holds. 
\begin{enumerate}
    \item If
    \begin{equation}
    \label{eq:cond:1:c:2}
    \begin{split}
        \left(\frac{2(k+1)}{dk-\alpha-1}\right)^{\frac{k+1}{(d+2)k-\alpha+1}}  \left( \frac{w_d k}{k - (\alpha + 1)} \right)^{-\frac{k}{(d+2)k + 1 - \alpha}} \cdot \\ \cdot\left( \int f^{k+1} |xb| dx \right)^{- \frac{1} {(d+2)k + 1-\alpha}} E^{\frac{k+1}{(d+2)k + 1 - \alpha}}\le \bar\rho
        \end{split}
    \end{equation}
    then
    \begin{equation*}
    \begin{split}
        \|f\|_1 &\le C(k,d,\alpha)\left( \int f^{k+1} |xb| dx \right)^{\frac{2}{(2+d)k + 1 - \alpha}} E^{\frac{dk - \alpha - 1}{(d+2)k + 1 - \alpha}}
     \end{split}
    \end{equation*}
    \item If
    \begin{equation}
    \label{eq:cond:2:c:2}
    \begin{split}
        \left(\frac{1}{\beta\bar\rho}\right)^{-\frac{\alpha(k+1)}{((d+2)k+1)k}} \left(\frac{2(k+1)}{dk-1}\right)^{\frac{k+1}{(d+2)k+1}}\left( \frac{w_d k}{k -  1} \right)^{-\frac{k}{(d+2)k + 1 }} \cdot \\ \cdot\left( \int f^{k+1} |xb| dx \right)^{- \frac{1} {(d+2)k + 1}} E^{\frac{k+1}{(d+2)k + 1}} \geq \bar \rho,
        \end{split}
    \end{equation}
    then 
    \begin{equation*}
    \begin{split}
        \|f\|_1 &\le C(k,d,\alpha)\left(\frac{1}{\beta\bar\rho}\right)^{\frac{2\alpha(k+1)}{((d+2)k+1)k}}\left( \int f^{k+1} |xb| dx \right)^{\frac{2}{(2+d)k + 1 }} E^{\frac{dk - 1}{(d+2)k + 1 }}.
     \end{split}
    \end{equation*}
\end{enumerate}
\end{proposition}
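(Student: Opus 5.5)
The plan is a ball/complement splitting of the mass, with the radius optimised; this is the multidimensional analogue of the estimate in \cite{toscani2012finite} and \cite[Section 5]{bianchini2024existence}. Write $J:=\int f^{k+1}|x\cdot b|\,dx=\int f^{k+1}|x||b|\,dx$, using $x\cdot b=-|x||b|$, and $E=E(f)$. For a radius $R>0$ split
\begin{equation*}
\|f\|_1=\int_{B_R}f\,dx+\int_{B_R^c}f\,dx .
\end{equation*}
The outer part is controlled by the second moment:
\begin{equation*}
\int_{B_R^c}f\le R^{-2}E .
\end{equation*}
For the inner part apply H\"older with exponents $k+1$ and $(k+1)/k$ against the weight $|x||b|$:
\begin{equation*}
\int_{B_R}f\le \Bigl(\int_{B_R} f^{k+1}|x||b|\Bigr)^{\frac1{k+1}}\Bigl(\int_{B_R}(|x||b|)^{-1/k}\,dx\Bigr)^{\frac{k}{k+1}} .
\end{equation*}

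In case (1), take $R\le\bar\rho$, so that $|b|=|x|^\alpha$ on $B_R$. The weight integral is then explicit:
\begin{equation*}
\int_{B_R}|x|^{-(1+\alpha)/k}\,dx=\omega_d\int_0^R r^{d-1-(1+\alpha)/k}\,dr=\frac{\omega_d k}{dk-1-\alpha}R^{\,d-(1+\alpha)/k}.
\end{equation*}
This is finite precisely because $dk>1+\alpha$, which is the role of that hypothesis on $\alpha$. Hence
\begin{equation*}
\|f\|_1\le C\,J^{\frac1{k+1}}R^{\frac{dk-1-\alpha}{k+1}}+R^{-2}E .
\end{equation*}
Next minimise in $R$. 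Balancing the two terms (or setting the derivative to zero) gives
\begin{equation*}
R_*\sim J^{-\frac1{(d+2)k+1-\alpha}}E^{\frac{k+1}{(d+2)k+1-\alpha}},
\end{equation*}
with explicit constants involving $\frac{2(k+1)}{dk-\alpha-1}$ and $\frac{\omega_dk}{dk-\alpha-1}$. These match the left side of \eqref{eq:cond:1:c:2}, so that hypothesis says exactly $R_*\le\bar\rho$, making the choice admissible. Substituting $R_*$ back gives $\|f\|_1\le C J^{2/((d+2)k+1-\alpha)}E^{(dk-\alpha-1)/((d+2)k+1-\alpha)}$, as claimed.

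In case (2), where the optimal radius would exceed $\bar\rho$, we cannot use the exact power law on the whole ball. Instead I would use a uniform lower bound valid on all of $B_R$, namely $|b|\ge \beta\bar\rho^{\alpha}$-type bounds. On $|x|\le\bar\rho$ we have $|x|^\alpha$, but the region near the origin still needs integrability. So I would write $(|x||b|)^{-1/k}\le(\beta\bar\rho^\alpha)^{-1/k}|x|^{-1/k}$ on $\bar\rho\le|x|\le R$. The inner part $|x|\le\bar\rho$ is handled by the case-(1) computation, which is bounded by the same expression since $R\ge\bar\rho$ and $\beta<1$. The lower bound on $\beta$ in \eqref{eq:syst:b:2} is presumably there to absorb the constant $\frac{dk-1}{dk-1-\alpha}$ that appears when comparing the two pieces. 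This gives
\begin{equation*}
\int_{B_R}(|x||b|)^{-1/k}\le C(\beta\bar\rho^\alpha)^{-1/k}\frac{\omega_dk}{dk-1}R^{d-1/k}.
\end{equation*}
Repeat the optimisation with $\alpha$ replaced by $0$ and the extra factor $(\beta\bar\rho)^{-\alpha/k}$ carried along. The optimal radius is then the expression in \eqref{eq:cond:2:c:2}, and that hypothesis guarantees it is $\ge\bar\rho$. Substituting yields the bound with prefactor $(1/(\beta\bar\rho))^{2\alpha(k+1)/(((d+2)k+1)k)}$.

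The main obstacle is the bookkeeping in case (2). I need the weight bound on $B_{\bar\rho}$ and on the annulus to merge into a single clean power $R^{d-1/k}$ with a constant depending only on $\beta,\bar\rho$. I also need the exponents of $\beta\bar\rho$ to come out exactly as stated, noting the slight mismatch between $\bar\rho^\alpha$ and $(\beta\bar\rho)^\alpha$, which I would resolve using $\beta<1$ and by absorbing any discrepancy into $C(k,d,\alpha)$. The exponent algebra in the optimisation is routine.
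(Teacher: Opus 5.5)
You follow the paper's proof: split $\norm{f}_1$ at a radius $R$, apply H\"older against the weight $|x\cdot b|$ on $B_R$, control $B_R^c$ by the second moment, and take $R$ to be the left-hand side of \eqref{eq:cond:1:c:2}, resp.\ \eqref{eq:cond:2:c:2}. In case (2) the weight integral is split into $B_{\bar\rho}$ and the annulus. Case (1) is correct. Your treatment of the $B_{\bar\rho}$ piece in case (2) is also correct: you bound it by $\frac{\omega_dk}{dk-1-\alpha}\beta^{1/k}(\beta\bar\rho^\alpha)^{-1/k}R^{d-1/k}$ via $R\ge\bar\rho$, then use $\beta\le1$. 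This is cleaner than the paper's merge with the exact constant $\frac{\omega_dk}{dk-1}$, which in fact requires $\beta^{\alpha/k}\le\frac{dk-1-\alpha}{dk-1}$, an upper rather than a lower bound on $\beta$.

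The gap is the step you defer as bookkeeping. It does not give the stated power of $\frac1{\beta\bar\rho}$, and no bookkeeping can. Write $J=\int f^{k+1}|xb|\,dx$. Your weight bound enters H\"older raised to the power $\frac k{k+1}$, so the inner term is $C J^{\frac1{k+1}}(\beta\bar\rho^\alpha)^{-\frac1{k+1}}R^{\frac{dk-1}{k+1}}$. Optimising in $R$ gives the radius $c\,(\beta\bar\rho^\alpha)^{\frac1{(d+2)k+1}}J^{-\frac1{(d+2)k+1}}E^{\frac{k+1}{(d+2)k+1}}$, which is not the left side of \eqref{eq:cond:2:c:2}, and the bound $C(\beta\bar\rho^\alpha)^{-\frac2{(d+2)k+1}}J^{\frac2{(d+2)k+1}}E^{\frac{dk-1}{(d+2)k+1}}$. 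Its power of $\bar\rho$ is $-\frac{2\alpha}{(d+2)k+1}$, against $-\frac{2\alpha(k+1)}{k((d+2)k+1)}$ in the statement, and a power of $\bar\rho$ cannot be absorbed into $C(k,d,\alpha)$.

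Indeed, case (2) as written fails for large $\bar\rho$. Take $|b|=\bar\rho^\alpha$ on $\{|x|\ge\bar\rho\}$ (admissible since $\beta<1$) and $f=h\mathbf 1_{\{R\le|x|\le2R\}}$ with $R\ge\bar\rho$. Both sides scale like $hR^d$. The left side of \eqref{eq:cond:2:c:2} equals $c\,\beta^{\frac{\alpha(k+1)}{k((d+2)k+1)}}\bar\rho^{\frac{\alpha}{k((d+2)k+1)}}R$, which is $\ge\bar\rho$ once $\bar\rho$ is large. The claimed inequality then reduces to $c'\le C\beta^{-\frac{2\alpha(k+1)}{k((d+2)k+1)}}\bar\rho^{-\frac{2\alpha}{k((d+2)k+1)}}$ for some $c'=c'(d,k,\alpha)>0$, which is false as $\bar\rho\to\infty$ with $\beta$ fixed. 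The paper obtains the stated exponent through the same slip, writing $(\beta\bar\rho)^{-\alpha/k}$ outside the $\frac k{k+1}$-th power.

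There are two repairs. Either state case (2) with your exponent, or add the assumption $\beta\bar\rho\le1$, so that $(\beta\bar\rho)^{-\frac{\alpha}{k+1}}\le(\beta\bar\rho)^{-\frac\alpha k}$. In the second option, plug in $R$ equal to the left side of \eqref{eq:cond:2:c:2}; it is admissible by hypothesis and need not be the exact minimiser. This costs nothing in Theorem \ref{thm:blowup-main}, where $\bar\rho$ is free.

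Lastly, $(\beta\bar\rho^\alpha)^{-\frac1{k+1}}=(\beta\bar\rho)^{-\frac\alpha{k+1}}\beta^{\frac{\alpha-1}{k+1}}$, and $\beta<1$ controls the last factor only if $\alpha\ge1$. Since $\alpha$ may lie in $(1-d/p,1)$, you need the lower bound $\beta\ge\bigl(\frac{dk-\alpha-1}{dk-1}\bigr)^{k/\alpha}$ of \eqref{eq:syst:b:2} here. That, not the merging constant, is the role of this bound in your argument.
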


\begin{proof} {\it Case 1.} Define
\begin{equation}
    \label{Equa:def_R'}
    \begin{split}
R' :=  \left(\frac{2(k+1)}{dk-\alpha-1}\right)^{\frac{k+1}{(d+2)k-\alpha+1}}\left( \frac{w_d k}{k - (\alpha + 1)} \right)^{-\frac{k}{(d+2)k + 1 - \alpha}} \cdot \\ \cdot\left( \int f^{k+1} |xb| dx \right)^{- \frac{1} {(d+2)k + 1-\alpha}} E^{\frac{k+1}{(d+2)k + 1 - \alpha}},
 \end{split}
\end{equation}
and compute by H\"older inequality
\begin{equation*}
    \begin{split}
        m &= \int f dx = \int_{|x| \leq R'} f dx + \int_{|x| > R'} f dx \\
        &\le \left( \int f^{k+1} |x|^{\alpha + 1} dx \right)^{\frac{1}{k+1}} \left( \int_{|x| \leq R'} \frac{1}{|x|^{\frac{1+\alpha}{k}}} dx \right)^{\frac{k}{k+1}} + \frac{1}{{R'}^2} E \\
    \big[ (\ref{eq:cond:1:c:2}) \big] \quad   &= \left( \int f^{k+1} |xb| dx \right)^{\frac{1}{k+1}} \left( \frac{\omega_d k}{dk-(\alpha+1)} \right)^{\frac{k}{k+1}} {(R')}^{\frac{dk-(\alpha+1)}{k+1}} + \frac{1}{{(R')}^2} E \\
     \big[   (\ref{Equa:def_R'}) \big] \quad  &= 
     4^{-\frac {k+1} {(d+2) k - \alpha+1}}\left( \frac{(d+2) k - \alpha + 1}{k + 1} \right)\left( \frac{k + 1}  {d k - \alpha - 1}\right)^{\frac{d k  - \alpha- 1 }{(d+2)k-\alpha +1}} \\
     &\qquad\cdot\left( \frac{\omega_d k}{dk-(\alpha+1)} \right)^{\frac{2k}{(d+2)k-\alpha+1}}\left( \int f^{k+1} |xb| dx \right)^{\frac{2}{(2+d)k + 1 - \alpha}} E^{\frac{dk - \alpha - 1}{(d+2)k + 1 - \alpha}},
    \end{split}
\end{equation*}
    which is the first estimate.
    
\noindent{\it Case 2.}
Define
\begin{equation}
\label{Equa:def_N}
\begin{split}
N := \left(\frac{1}{\beta\bar\rho}\right)^{-\frac{\alpha(k+1)}{((d+2)k+1)k}} \left(\frac{2(k+1)}{dk-1}\right)^{\frac{k+1}{(d+2)k+1}}\left( \frac{w_d k}{k -  1} \right)^{-\frac{k}{(d+2)k + 1 }}\cdot \\ \cdot \left( \int f^{k+1} |xb| dx \right)^{- \frac{1} {(d+2)k + 1}} E^{\frac{k+1}{(d+2)k + 1}},
\end{split}
\end{equation}
and compute
\begin{equation*}
    \begin{split}
        m &= \int f dx = \int_{|x| \leq N} f dx + \int_{|x| > N} f dx \\
        &\le \left( \int f^{k+1} |xb| dx \right)^{\frac{1}{k+1}} \left( \int_{|x| \leq N} \frac{1}{|xb|^{\frac{1}{k}}} dx \right)^{\frac{k}{k + 1}} + \frac{1}{(N)^2} E \\
     \big[ (\ref{eq:syst:b:2}), (\ref{eq:cond:2:c:2}) \big] \quad  &\le \left( \int f^{k+1} |xb| dx \right)^{\frac{1}{k+1}} \left(  \int_{|x|\le \bar \rho} \frac{1}{|x|^{\frac{1 + \alpha}{k}}} dx +  \int_{\bar \rho\le |x|\le N} \frac{1}{|x  (\beta \bar \rho)^\alpha|^{\frac{1}{k}}} dx \right)^{\frac{k}{k+1}} \\
 &\qquad+  \frac{1}{(N)^2} E \\       
 \quad        &\le \left( \int f^{k+1} |xb| dx \right)^{\frac{1}{k+1}} \left( \frac{\omega_d k}{dk - 1} \right)^{\frac{k}{k+1}}(N)^{\frac{d k-1}{k+1}} \frac{1}{(\beta\bar\rho)^\frac{\alpha}{k}}+\frac{1}{(N)^2} E \\
  \big[ \eqref{Equa:def_N} \big] &=4^{-\frac {k+1} {(d+2) k -+1}}\left( \frac{(d+2) k + 1}{k + 1} \right)\left( \frac{k + 1}  {d k - 1}\right)^{\frac{d k  - 1 }{(d+2)k +1}} \left( \frac{\omega_d k}{dk-1} \right)^{\frac{2k}{(d+2)k+1}}\\
     &\qquad\cdot\left(\frac{1}{\beta\bar\rho}\right)^{\frac{2\alpha(k+1)}{((d+2)k+1)k}}\left( \int f^{k+1} |xb| dx \right)^{\frac{2}{(2+d)k + 1 }} E^{\frac{dk - 1}{(d+2)k + 1 }}, 
    \end{split}
\end{equation*}
This is the second estimate in the statement.
\end{proof}

\begin{proof}[Proof of Theorem \ref{thm:blowup-main}]
The choice of the constant $\bar\rho$ covers both the cases \eqref{eq:cond:1:c:2} and \eqref{eq:cond:2:c:2}, as we can see from the conditions
\begin{equation*}
           \left(\int f^{k+1} |xb| dx \right)^{-1} E^{{k+1}}\le \bar\rho^{(d+2)k + 1 - \alpha} \left(\frac{2(k+1)}{dk-\alpha-1}\right)^{-k-1}\left( \frac{w_d k}{k - (\alpha + 1)} \right)^{k} 
\end{equation*}
and 
\begin{equation*}
           \left(\int f^{k+1} |xb| dx \right)^{-1} E^{{k+1}}\ge  \left(\frac{1}{\beta\bar\rho}\right)^{\frac{\alpha(k+1)}{k}} \bar\rho^{(d+2)k + 1} \left(\frac{2(k+1)}{dk-1}\right)^{-k-1}\left( \frac{w_d k}{k - 1} \right)^{k}.
\end{equation*}

By Proposition \ref{lemma:func1:2} and \eqref{eq:pde}, one obtains
\begin{equation*}
    \frac{dE}{dt} \leq 2d\|u_0\|_1 - C \min \bigg\{ {E^{-\frac{dk - \alpha - 1}{2}}}, E^{-\frac{dk-1}{2}} \bigg\},
    \end{equation*}
which leads in finite time to $E(t) \to 0$ if $E(u_0)\ll 1$.
Clearly $E(t) \to 0$ and finite mass imply that $\|u(t)\|_\infty$ blows up.
\end{proof}

\section{Long-time behaviour of solutions}
\label{sec:longbehav}

In this last section, we discuss the long-time behaviour of the solutions when there are uniform bounds on the drift $b$.
\begin{equation*}
(\dive b)_-\in L^\infty
  \bigl((0,\infty),L^{p,\infty}(\R^d)\bigr),L^\infty(\R^3))
\end{equation*}

We prove the following theorem.

\begin{theorem}
    \label{Theo:long_time_1}
    Assume condition \eqref{eq:assumptions2}. The following holds.
    \begin{enumerate}
        \item \label{Point_1:long_time_1} If $k <k_c$, there are stationary solutions.

        \item \label{Point_2:long_time_1} If $k =k_c$, then every well defined solution $u$ decays like $\|u(t)\|_\infty\leq \mathcal O(t^{-\frac{d}{2}}) $.
        \item \label{Point_3:long_time_1} If $k>k_c$, and the solution $u$ is uniformly bounded in time and space, i.e. $u \in L^\infty_{t,x}$, then decays like $\|u(t)\|_\infty\leq \mathcal O(t^{-\frac{d}{2}}) $.
    \end{enumerate}
\end{theorem}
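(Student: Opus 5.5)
Parts \eqref{Point_2:long_time_1} and \eqref{Point_3:long_time_1} will both go through the concentration inequality \eqref{eq:concentration-raw}, now used with a time-dependent supersolution instead of the stationary profile $\Phi_\lambda$. Part \eqref{Point_1:long_time_1} is a separate, explicit construction.

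\emph{Part \eqref{Point_1:long_time_1}.} I look for a radial stationary solution with zero flux, i.e. $b\,u^{1+k}=\nabla u$. Take a profile $u(x)=(1+|x|^2)^{-\gamma}$ with $\gamma>d/2$, so that $u\in L^1\cap L^\infty$. Define
\begin{equation*}
b:=\frac{\nabla u}{u^{1+k}}=-2\gamma\, x\,(1+|x|^2)^{\gamma k-1}.
\end{equation*}
This $b$ is smooth and hence locally bounded. A direct computation should give
\begin{equation*}
\dive b=-2\gamma(1+|x|^2)^{\gamma k-2}\bigl(d+(d+2\gamma k-2)|x|^2\bigr).
\end{equation*}
The positive part $(\dive b)_+$ can be large, since only $(\dive b)_-$ is constrained. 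For the negative part we need $(\dive b)_-\in L^{p,\infty}$, which amounts to the growth condition $|x|^{2\gamma k-2}\in L^{p,\infty}$ at infinity, i.e. $2\gamma k-2\le -d/p$. This is compatible with $\gamma>d/2$ exactly when $dk-2<-d/p$, i.e. $k<k_c$. If the sign works out unfavourably, I will instead use a compactly perturbed profile, or flip the sign of the zero-flux relation. Then $u$ is a stationary solution that does not decay, as claimed.

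\emph{Parts \eqref{Point_2:long_time_1}, \eqref{Point_3:long_time_1}.} The idea is self-similar rescaling. Comparison on $m$ is already available, via the weighted-Gronwall argument of Lemma \ref{lem:ineqPhim}. It therefore suffices to build a supersolution $M(t,s)$ of
\begin{equation*}
\partial_t M\ge a_d s^{2-2/d}M_{ss}+a_p s^{1-1/p}(M_s)^{1+k}
\end{equation*}
with $M(0,\cdot)\ge m(0,\cdot)$, $M\le\|u_0\|_1$, and $M_s(t,0)\le Ct^{-d/2}$. Then $\|u(t)\|_\infty=m_s(t,0)\le M_s(t,0)$ gives the claimed decay.

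The ansatz I would try is $M(t,s)=\Psi\bigl(s/(t+t_0)^{d/2}\bigr)$. Under this scaling the diffusion term and $\partial_tM$ both scale like $t^{-1}$. The drift term scales like $t^{-1}\cdot t^{\frac d2(dk-(2-d/p))\cdot\frac1d}$, i.e. with the extra factor $(t+t_0)^{-\frac d2(k-k_c)}$. For $k=k_c$ this factor is $1$, and the problem reduces to an ODE for $\Psi$. This is a heat-type radial mass profile with a modified nonlinear term, for which I would seek $\Psi$ increasing, concave, with $\Psi(\infty)\ge\|u_0\|_1$ and $\Psi'(0)<\infty$. For $k>k_c$ the drift factor is $\le t_0^{-\frac d2(k-k_c)}$, which is small, so a perturbation of the pure heat profile should work once $t_0$ is large. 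The assumption $u\in L^\infty_{t,x}$ is used to make the slope bound $K$ in the comparison uniform, and to absorb initial times.

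\emph{Main obstacle.} The hard step is the critical ODE: finding $\Psi$ with finite slope at $0$ and total mass $\ge\|u_0\|_1$, despite the logarithmic mass capacity seen in Lemma \ref{lem:barlambda}. I would first try $\Psi'=\varphi_\lambda(\sigma)e^{-c\sigma^{2/d}}$, i.e. the stationary profile damped by a Gaussian-type factor, choosing $\lambda$ small to get enough mass and then $c$ to absorb the $-\frac d2\sigma\Psi'$ term coming from $\partial_tM$.
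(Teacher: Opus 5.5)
\textbf{Parts (1) and (2).} Your Part (1) is exactly the paper's construction and is correct. With the paper's choice $2\gamma k-2=-d/p$ one has $\dive b<0$ everywhere and $(\dive b)_-\sim|x|^{-d/p}$, so no fallback is needed.

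Part (2) follows the paper's self-similar strategy, and your damped ansatz does work. For $k=k_c$ and $\psi=\varphi_\lambda(z)e^{-cz^{2/d}}$,
\begin{equation*}
\begin{split}
&\psi'+\frac{a_p}{a_d}z^{k_c-1}\psi^{1+k}+\frac{d}{2a_d}z^{2/d-1}\psi\\
&\qquad=-\frac{a_p}{a_d}z^{k_c-1}\varphi_\lambda^{1+k}e^{-cz^{2/d}}\bigl(1-e^{-ckz^{2/d}}\bigr)+\Bigl(\frac{d}{2a_d}-\frac{2c}{d}\Bigr)z^{2/d-1}\psi,
\end{split}
\end{equation*}
which is $\le0$ as soon as $c\ge d^2/(4a_d)$. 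The order of choices is then:
\begin{itemize}
\item fix $c$ first, since it does not depend on $\lambda$;
\item take $\lambda$ small so that $\Psi(\infty)>\norm{u_0}_1$ strictly (your listed requirement $M\le\norm{u_0}_1$ should be reversed);
\item take $t_0$ \emph{small} to get the initial ordering.
\end{itemize}
This gives a supersolution rather than the paper's exact Bernoulli solution; both work.

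\textbf{Part (3) has a genuine gap.} The large-$t_0$ perturbation cannot work for general data. Since $\Psi$ is nondecreasing, $\partial_tM\le0$. So $M(0,\cdot)$ must itself be a stationary supersolution, $a_ds^{2-2/d}M_{ss}+a_ps^{1-1/p}M_s^{1+k}\le0$. Integrating $\partial_s(M_s^{-k})\ge k\frac{a_p}{a_d}s^{k_c-1}$ gives
\begin{equation*}
M(0,s)\le\Bigl(\frac{k\,a_p}{k_c\,a_d}\Bigr)^{-1/k}\frac{k}{k-k_c}\,s^{1-k_c/k}\qquad\text{for all }s>0,
\end{equation*}
whatever $t_0$ is. Two things combine here:
\begin{itemize}
\item the smallness of the drift factor for large $t_0$ is exactly cancelled by the concentration $\Psi'(0)\ge\norm{u_0}_\infty t_0^{d/2}$ needed to lie above $m(0,\cdot)$ (this is the scaling invariance of the inequality);
\item for $k>k_c$ the stationary barrier has bounded mass capacity near $s=0$.
\end{itemize}
So you only get decay under a smallness condition of the type $a_p\norm{u_0}_\infty^{k-k_c}\norm{u_0}_1^{k_c}\lesssim1$.

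This failure is unavoidable. Your supersolution depends only on $u_0$ and $a_p$. You use $u\in L^\infty_{t,x}$ only to make the Gronwall constant uniform, which is immaterial: Gronwall from a zero initial discrepancy gives zero on every finite interval, whatever the constant. If the construction worked for all data, it would bound $\norm{u(t)}_\infty$ along the blow-up solutions of Theorem \ref{thm:blowup-main}, which satisfy \eqref{eq:assumptions2}.

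The missing idea, which is what the paper does, is to spend the bound $K=\norm{u}_{L^\infty_{t,x}}$ inside the drift term:
\begin{equation*}
(m_s)^{1+k}=(u^*)^{1+k}\le K^{k-k_c}(m_s)^{1+k_c}.
\end{equation*}
Then $m$ satisfies the \emph{critical} inequality with $a_p$ replaced by $a_pK^{k-k_c}$. Your Part (2) supersolution, whose mass capacity is unbounded as $\lambda\to0$, applies verbatim and yields $\norm{u(t)}_\infty\le C(K)\,t^{-d/2}$.
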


\begin{proof} 
{\it Point \eqref{Point_1:long_time_1}.} It is easy to prove that 
\begin{equation*}
    u(x) = (1 + |x|^2)^{-\alpha}
\end{equation*}
is a stationary solution for the drift
\begin{equation*}
    b(x) = -  2 \alpha x(1 + |x|^2)^{\alpha k-1}.
\end{equation*}
It is enough to take $\alpha=(2-d/p)/(2k)$ to have $\dive  b\in L^{p,\infty}$.
\medskip

{\it Point \eqref{Point_2:long_time_1}.} 
We look for self-similar solutions for the differential inequality
\begin{equation*}
\partial_t m \leq a_d s^{2-\frac2d}m_{ss}+a_p s^{1-1/p} (m_s)^{1+k},
\end{equation*}
of the kind
\begin{equation*}
\hat m_\varepsilon(t,s)=\bar m \Phi\left( \frac{s}{(t+\varepsilon)^{d/2}}\right).
\end{equation*}
Notice that in this case $a_p$ is time-independent. Fix $\bar m>m$ and look for a self-similar supersolution of the form
\begin{equation*}
\hat m_\varepsilon(t,s)
=
\bar m \Phi\left(
\frac{s}{(t+\varepsilon)^{d/2}}
\right).
\end{equation*}
For
\begin{equation*}
z=\frac{s}{(t+\varepsilon)^{d/2}},
\qquad
\varphi(z)=\Phi'(z),
\end{equation*}
the problem reduces to solving the equation
\begin{equation}
\varphi'+\frac{a_p\bar m^k}{a_d}z^{k-1}\varphi^{1+k}+\frac{d}{2a_d}z^{2/d-1}\varphi=0.
\label{eq:profile-ode}
\end{equation}
With standard methods, we find the family of solutions
\begin{equation}
\varphi_\lambda(z)=\exp\left(-\frac{d^2}{4a_d}z^{d/2}\right)\left[\lambda+\frac{ka_p\bar m^k}{a_d}\int_0^z\xi^{k-1}\exp\left(-\frac{kd^2}{4a_d }\xi^{2/d}\right)d\xi\right]^{-1/k}.
\label{eq:critical-profile}
\end{equation}

Notice that for every $\lambda$, it holds $\varphi_\lambda\in L^1.$
Moreover, it is easy to prove that
\begin{equation*}
\lim_{\lambda\to 0^+}\int_0^\infty \varphi_\lambda(z)\,dz=+\infty
\end{equation*}
And
\begin{equation*}
\lim_{\lambda\to =+\infty}\int_0^\infty \varphi_\lambda(z)\,dz
=0.
\end{equation*}
Therefore, there exists $\bar\lambda >0$ such that
\begin{equation*}
\int_0^\infty \varphi_{\bar\lambda}(z)\,dz=1.
\end{equation*}
Define
\begin{equation*}
\Phi(s):=\int_0^s \varphi_{\bar\lambda}(z)\,dz.
\end{equation*}
Then
\begin{equation*}
\Phi(0)=0,
\qquad
\lim_{z\to \infty}\Phi(z)=1,
\qquad
\Phi''\leq0.
\end{equation*}

Since $\lim_{z\to \infty}\bar m\Phi(z)=\bar m>\|u(0)\|_1$, for $\varepsilon\ll 1$ it holds
\begin{equation*}
\hat m_\varepsilon(0,\bar s)\geq m.
\end{equation*}
At this point, the function $m_\varepsilon(t,s)$ has the properties of $ \Phi_\lambda(s)$ and exactly with the same computations of Lemma \ref{lem:ineqPhim}, one can prove that 
\begin{equation*}
m(t,s)
\leq
\hat m_\varepsilon(t,s).
\end{equation*}
Finally, for $\varepsilon\to 0^+$, we obtain
\begin{equation*}
\|u(t)\|_\infty\le
\lim_{s\to0^+}\frac{m(t,s)}{s}\leq
\bar m\,t^{-d/2}\Phi'(0)=\bar m\,\bar\lambda^{-1/k}t^{-d/2}.
\end{equation*}
Since $\bar m$, $\bar \lambda$ and $k$ do not depend on the time $t$, the proof is completed.
{\it Point \eqref{Point_3:long_time_1}.} The proof is almost totally the same as Point \eqref{Point_2:long_time_1}. The only difference is that, one has to estimate the drift term as
\begin{align*}
-\int_{\Omega_s}\dive\bigl(bu^{1+k}\bigr)\,d x
  &=-(u^*(t,s))^{1+k}
    \int_{\partial \Omega_s}b\cdot n\,d\mathcal H^{d-1}\\
  &=-(u^*(t,s))^{1+k}\int_{\Omega_s}\dive b\,\\
  &\leq p'\norm{ {(\dive b(t))_-}}_{L^{p,\infty}}\norm{u}_{L^\infty_{t,x}}^{k-k_c}s^{1-1/p}(m_s)^{1+k_c}
\end{align*}
\end{proof}
\printbibliography

\end{document}